\newtheorem{Theorem}{Theorem} 
\newtheorem{Proposition}[Theorem]{Proposition}
\newtheorem{Remark}[Theorem]{Remark}
\newtheorem{Lemma}[Theorem]{Lemma}
\numberwithin{Theorem}{section}
\numberwithin{equation}{section}
\title[Adjoint methods for Obstacle problems and Weakly coupled systems of PDE]{Adjoint methods for Obstacle problems and Weakly coupled systems of PDE}
\author[F. Cagnetti, D. Gomes, H. V. Tran]{Filippo Cagnetti, Diogo Gomes, Hung Vinh Tran}
\address[F. Cagnetti]{Departamento de Matem\'atica
Instituto Superior T\'ecnico,
Av. Rovisco Pais, 1049-001 Lisboa, Portugal}
\email{cagnetti@math.ist.utl.pt}
\address[D. Gomes]{Departamento de Matem\'atica
Instituto Superior T\'ecnico,
Av. Rovisco Pais, 1049-001 Lisboa, Portugal}
\email{dgomes@math.ist.utl.pt}
\address[H. V. Tran]{Department of Mathematics,
University of California Berkeley, CA, 94720-3840}
\email{tvhung@math.berkeley.edu}
\begin{document}

\date{} 

\begin{abstract}
The adjoint method, recently introduced by Evans, 
is used to study obstacle problems, weakly coupled systems, 
cell problems for  weakly coupled systems of Hamilton--Jacobi equations, 
and weakly coupled systems of obstacle type.
In particular, new results about the speed of convergence of some approximation procedures are derived. 
\end{abstract}

\maketitle

{\small

\bigskip
\keywords{\noindent {\bf Keywords:} 
adjoint methods, cell problems,
Hamilton--Jacobi equations, obstacle problems,
weakly coupled systems, weak KAM theory.}

\subjclass{\noindent {\bf 2010 Mathematics Subject Classification:} 
35F20, 35F30, 37J50, 49L25.}}


\begin{section}{Introduction}

{We study} the speed of convergence of certain approximations
for obstacle problems and weakly coupled systems of Hamilton--Jacobi equations,
using the Adjoint Method.
This technique, recently introduced by Evans 
(see \cite{E2}, and also \cite{T1} and \cite{CGT1}),
is a very successful tool to understand several types of degenerate PDEs.
It can be applied, for instance, to Hamilton--Jacobi equations with non convex Hamiltonians, 
e.g. time dependent (see \cite{E2}) and time independent (see \cite{T1}),
to weak KAM theory (see \cite{CGT1}), and to the infinity Laplacian equation (see \cite{E3}). 
We address here several new applications, and propose some new open questions. 
Further results, which will not be discussed here, can be found in \cite{E2} and \cite{CGT1}.

\begin{subsection}{Outline of the paper} 
{The} paper contains four further sections concerning 
obstacle problems, weakly coupled systems, effective Hamiltonian 
for weakly coupled systems of Hamilton--Jacobi equations, 
and weakly coupled systems of obstacle type, respectively.
We use a common strategy to study all these problems.
Note, however, that each of them presents different challenges, which
are described in the corresponding sections.
Also, we believe that the applications we present here illustrate 
how to face the difficulties that can be encountered in the study 
of other systems of PDEs and related models.
In particular, we show how to control singular terms arising from the switching
to an obstacle (Lemma \ref{obs_lem2}), random switching (Lemma \ref{kiz}), 
or optimal switching (Lemma \ref{gamprimeb}).

\medskip

In order to clarify our approach, let us give the details
of its application to the obstacle problem (see Section \ref{sectobs}):
\begin{equation} \label{qw}
\left\{ \begin{aligned}
\max \{ u- \psi, u+H(x,Du)\}&=0 \quad \mbox{in}~ U,\vspace{.05in}\\
u &= 0 \quad \mbox{on}~ \partial U,\\
\end{aligned} \right. 
\end{equation}
where $\psi: \overline{U} \to \mathbb{R}$ and $H: \mathbb R^n \times \overline{U} \to \mathbb R$ are smooth,
with $\psi \geq 0$ on $\partial U$. 
Here and in all the paper, $U$ is an open bounded domain in $\mathbb R^n$ with smooth boundary, 
and $n \geq 2$.
Moreover, we will denote with $\nu$ the \textit{outer} unit normal to $\partial U$.
This equation arises naturally in optimal control theory, in the study of optimal stopping (see \cite{L}). 
See also \cite{BP,IY}.

Classically, in order to study \eqref{qw} one first modifies the equation, 
by adding a perturbation term that penalizes the region where $u > \psi$.
Then, a solution is obtained as a limit of the solutions of the penalized problems.
More precisely, let $\gamma: \mathbb R \to \mathbb [0, +\infty)$ be smooth
such that
\begin{equation}
\left\{ \begin{aligned}
\gamma(s)=0~\mbox{for}~s\le 0, \quad \gamma(s)>0~\mbox{for}~s>0,\vspace{.05in}\\
0<\gamma'(s) \le 1~\mbox{for}~s>0,~\mbox{and}~ \lim_{s \to + \infty} \gamma(s)= + \infty,\\
\end{aligned} \right. 
\notag
\end{equation}
and define $\gamma^\varepsilon: \mathbb R \to \mathbb [0, +\infty)$ as
\begin{equation} \label{gammaep}
\gamma^\varepsilon(s) := \gamma \left( \dfrac{s}{\varepsilon} \right),~\mbox{for all}~s \in \mathbb R, 
\quad \mbox{for all}~\varepsilon > 0.
\end{equation}
In some of the problems we discuss we also require $\gamma$ to
be convex in order to obtain improved results, but that will be pointed out where necessary.
For every $\varepsilon >0$, one can introduce the penalized PDE
\begin{equation}
\left\{ \begin{aligned}
u^\varepsilon+H(x,Du^\varepsilon)+\gamma^\varepsilon(u^\varepsilon-\psi)
&= \varepsilon \Delta u^\varepsilon \quad \mbox{in}~ U, \vspace{.05in}\\
u^\varepsilon &= 0 \qquad \mbox{on}~ \partial U.\\
\end{aligned} \right. 
\label{obs_regintr}
\end{equation}
To avoid confusion, we stress the fact that here $\gamma^\varepsilon(u^\varepsilon-\psi)$
stands for the composition of the function $\gamma^{\varepsilon}$ with $u^\varepsilon-\psi$.
Unless otherwise stated, we will often simply write 
$\gamma^{\varepsilon}$ and $\left( \gamma^{\varepsilon} \right)'$
to denote $\gamma^\varepsilon(u^\varepsilon-\psi)$ 
and $\left( \gamma^\varepsilon \right)' (u^\varepsilon-\psi)$, respectively.

Thanks to \cite{L}, for every $\varepsilon > 0$ 
there exists a smooth solution $u^\varepsilon$ to \eqref{obs_regintr}.
It is also well known that, up to subsequences, $u^{\varepsilon}$
converges uniformly to a viscosity solution $u$ of \eqref{qw} 
(see also Section \ref{sectobs} for further details).


We face here the problem requiring a coercivity assumption 
on $H$ and a compatibility condition for equation \eqref{qw} (see hypotheses (H\ref{sectobs}.1) and (H\ref{sectobs}.2), respectively), 
and show that the speed of convergence in the general case is $O(\varepsilon^{1/2})$.
Notice that we do not require the Hamiltonian $H$ to be convex in $p$.
\begin{Theorem}
\label{obs_speed}
Suppose conditions (H\ref{sectobs}.1) and (H\ref{sectobs}.2) in Section~\ref{sectobs} hold.
Then, there exists a positive constant $C$, independent of
$\varepsilon$, such that
\begin{equation}
\|u^\varepsilon - u\|_{L^\infty} \le C \varepsilon^{1/2}.
\notag
\end{equation}
\end{Theorem}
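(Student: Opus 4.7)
\medskip

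\noindent\textbf{Proof strategy.} I would follow the adjoint-method template of \cite{E2,CGT1}. The first step is to establish uniform bounds
\begin{equation*}
\|u^\varepsilon\|_{L^\infty(U)} + \|Du^\varepsilon\|_{L^\infty(U)} \le C,
\end{equation*}
independent of $\varepsilon$. The sup bound follows from the maximum principle applied to \eqref{obs_regintr} (the penalty $\gamma^\varepsilon\ge 0$ has the right sign), and the gradient bound from a Bernstein-type argument using the coercivity hypothesis (H\ref{sectobs}.1). Together with the convergence result of \cite{L}, this reduces the statement to quantifying the rate at which $u^\varepsilon\to u$.

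The two one-sided estimates should be treated separately. The bound $u^\varepsilon - u \le C\varepsilon^{1/2}$ is the ``easy'' direction: since $u$ satisfies $u + H(x,Du)\le 0$ in the viscosity sense and the penalty $\gamma^\varepsilon$ points in the right direction, a standard vanishing-viscosity-type comparison (together with the compatibility condition (H\ref{sectobs}.2) to handle the boundary) should give this half without invoking the adjoint. The nontrivial direction is $u - u^\varepsilon \le C\varepsilon^{1/2}$, where one has to fight the singular penalty.

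For this second direction, fix $x_0\in U$ and introduce the adjoint variable $\sigma^\varepsilon$ solving the linearization-adjoint of \eqref{obs_regintr}:
\begin{equation*}
\sigma^\varepsilon + (\gamma^\varepsilon)'(u^\varepsilon-\psi)\,\sigma^\varepsilon - \text{div}\bigl(H_p(x,Du^\varepsilon)\,\sigma^\varepsilon\bigr) - \varepsilon\Delta\sigma^\varepsilon = \delta_{x_0} \quad\text{in } U,
\end{equation*}
with $\sigma^\varepsilon=0$ on $\partial U$. Duality with the maximum principle yields $\sigma^\varepsilon\ge 0$ and $\int_U \sigma^\varepsilon\,dx \le 1$. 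The plan is then to multiply \eqref{obs_regintr} (or a suitably differentiated version of it) by $\sigma^\varepsilon$, integrate by parts, and extract via Cauchy--Schwarz a factor $\varepsilon^{1/2}$ from the viscosity term $\varepsilon\int_U |D^2u^\varepsilon|^2 \sigma^\varepsilon\,dx$. Choosing $x_0$ as the point where $u-u^\varepsilon$ attains its maximum then produces the desired pointwise bound.

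\textbf{Main obstacle.} The delicate point is the singular contribution $\int_U (\gamma^\varepsilon)'(u^\varepsilon-\psi)(\cdots)\,\sigma^\varepsilon\,dx$ that appears after integration by parts: formally $(\gamma^\varepsilon)'$ is of order $\varepsilon^{-1}$ on the coincidence set, so one cannot discard it. This is precisely the singular term advertised in the introduction as the content of Lemma \ref{obs_lem2}, and I would expect to prove its boundedness by a careful integration by parts against $\sigma^\varepsilon$, using the monotonicity of $\gamma^\varepsilon$, the $L^\infty$ bound on $Du^\varepsilon$, and (H\ref{sectobs}.2) to absorb the boundary flux terms generated by the concentration of $\sigma^\varepsilon$ near $\partial U$. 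Once this uniform control is in hand, the remainder of the proof is the by-now standard Cauchy--Schwarz step that produces the $\varepsilon^{1/2}$ rate.
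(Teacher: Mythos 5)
Your preliminary bounds ($\|u^\varepsilon\|_{L^\infty},\|Du^\varepsilon\|_{L^\infty}\le C$) and the shape of the adjoint equation are right, but the core of the argument is missing two ideas that the paper actually relies on, and without them the proposal does not close. First, the paper does not split into two one-sided estimates; instead it differentiates the penalized equation with respect to $\varepsilon$, obtaining an equation for $u^\varepsilon_\varepsilon:=\partial_\varepsilon u^\varepsilon$, pairs that with $\sigma^\varepsilon$ (choosing $x_0$ at a maximum point of $|u^\varepsilon_\varepsilon|$, not of $u-u^\varepsilon$), and proves $\|u^\varepsilon_\varepsilon\|_{L^\infty}\le C\varepsilon^{-1/2}$; integrating in $\varepsilon$ then gives both directions at once. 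Your ``easy'' direction $u^\varepsilon-u\le C\varepsilon^{1/2}$ via a bare vanishing-viscosity comparison is not substantiated: $u$ is only Lipschitz, $\varepsilon\Delta u$ is not classical, and a comparison at the max of $u-u^\varepsilon$ still leaves a term $-\varepsilon\Delta u^\varepsilon$ for which you have no pointwise bound. Second, and more seriously, you misidentify the mechanism that tames the singular $O(1/\varepsilon)$ term $\gamma^\varepsilon_\varepsilon=-\varepsilon^{-1}(u^\varepsilon-\psi)(\gamma^\varepsilon)'$. You attribute it to ``a careful integration by parts against $\sigma^\varepsilon$'' and to Lemma~\ref{obs_lem2}, but the key input is the \emph{pointwise} maximum-principle bound of Lemma~\ref{obs_lem4}, namely $\max_{\overline{U}}(u^\varepsilon-\psi)/\varepsilon\le C$. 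That bound is proved \emph{before} and independently of any adjoint consideration, simply by applying the maximum principle to $\gamma^\varepsilon(u^\varepsilon-\psi)$ at an interior maximum. It gives $|\gamma^\varepsilon_\varepsilon|\le C(\gamma^\varepsilon)'$, and only then does the weighted identity of Lemma~\ref{obs_lem2}(ii), which yields $\int_U(\gamma^\varepsilon)'\sigma^\varepsilon\,dx\le 1$, let you conclude $\int_U|\gamma^\varepsilon_\varepsilon|\sigma^\varepsilon\,dx\le C$. Without Lemma~\ref{obs_lem4} (or some replacement giving a pointwise bound on $(u^\varepsilon-\psi)/\varepsilon$ on the set where $(\gamma^\varepsilon)'>0$), neither the boundedness of $\gamma^\varepsilon_\varepsilon\sigma^\varepsilon$ nor even your preliminary gradient estimate via Bernstein (which uses the bound on $\gamma^\varepsilon$) can be carried out. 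So the proposal has the right scaffolding but omits the actual closing mechanism.
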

{The proof of Theorem \ref{obs_speed} consists of three steps.}
\smallskip

\textbf{Step I: Preliminary Estimates.} We first show that
\begin{equation} \label{hfj}
\max_{x \in \overline{U}} \frac{u^{\varepsilon} (x) - \psi (x)}{\varepsilon} \leq C, 
\end{equation}
for some constant $C > 0$ independent of $\varepsilon$ (see Lemma \ref{obs_lem4}).
This allows us to prove that 
\begin{equation*}
 \| u^{\varepsilon} \|_{L^{\infty}}, \| D u^{\varepsilon} \|_{L^{\infty}} \leq C,
\end{equation*}
see Proposition \ref{obs1}.

\smallskip

\textbf{Step II: Adjoint Method.}
We consider the formal linearization of \eqref{obs_regintr}, 
and then introduce the correspondent adjoint equation (see equation~\eqref{obs_adj}).
The study of this last equation for different values of the right-hand side
allows us to obtain several useful estimates (see Lemma~\ref{obs_lem2} and Lemma~\ref{obs_lem3}).

\smallskip

\textbf{Step III: Conclusion.}
We conclude the proof of Theorem \ref{obs_speed} by showing that
\begin{equation} \label{hf}
\max_{x \in \overline{U}} \left|  u^\varepsilon_{\varepsilon} (x)  \right| \le
\dfrac{C}{\varepsilon^{1/2}}, \qquad \qquad u^\varepsilon_{\varepsilon} (x) := \frac{\partial u^\varepsilon}{\partial \varepsilon} (x),
\end{equation}
for some constant $C > 0$ independent of $\varepsilon$ (see Lemma \ref{obs_thm1}).
The most delicate part of the proof of \eqref{hf} consists in controlling the term 
(see relation \eqref{obsk}) 
\begin{equation} \label{dvar}
\gamma^{\varepsilon}_{\varepsilon} (s)
:= \dfrac{\partial \gamma^\varepsilon }{\partial \varepsilon} (s)
 =-\dfrac{s}{\varepsilon^2} 
\gamma ' \left( \dfrac{s}{\varepsilon} \right),
\qquad \text{for } s \in \mathbb{R} .
\end{equation}

\medskip

We underline that getting a bound for \eqref{dvar} 
can be extremely hard in general.
In this context, this is achieved by differentiating equation \eqref{obs_regintr} w.r.t. $\varepsilon$
(see equation \eqref{obs5}), and then by using {inequality} \eqref{hfj}, 
Lemma \ref{obs_lem2} and Lemma \ref{obs_lem3}.
This means that we overcome the problem by essentially using 
the Maximum Principle and the monotonicity of $\gamma^\varepsilon$
(see estimates \eqref{obs6} and \eqref{uj}).
We were not able to obtain such a bound
when dealing with homogenization or singular perturbation, where also similar terms appear. 
We believe it would be very interesting to find the correct way 
to apply the Adjoint Method in these situations.

\medskip

In Section \ref{weakcou} we study
monotone weakly coupled systems of Hamilton--Jacobi equations
\begin{equation}
\left\{ \begin{aligned}
c_{11} u_1 +c_{12} u_2 + H_1(x,Du_1) &=0, \vspace{.05in}\\
c_{21} u_1 +c_{22} u_2 + H_2(x,D u_2) &=0, \\
\end{aligned} \right. 
\quad \mbox{in}~ U,
\label{syst}
\end{equation}
with boundary conditions $u_1 = u_2=0$ on $\partial U$,
by considering the following approximation:
\begin{equation*}
\left\{ \begin{aligned}
c_{11} u^\varepsilon_1 +c_{12} u^\varepsilon_2 + H_1(x,D u^\varepsilon_1) &=\varepsilon \Delta u^\varepsilon_1
\vspace{.05in}\\
c_{21}u^\varepsilon_1+c_{22}u^\varepsilon_2 + H_2(x,Du^\varepsilon_2) &=\varepsilon \Delta u^\varepsilon_2 \\
\end{aligned} \right. 
\quad \mbox{in}~ U,
\end{equation*}
with $u^\varepsilon_1 = u^\varepsilon_2=0$ on $\partial U$.
\noindent
Under some coupling assumptions on the coefficients
(see conditions (H\ref{weakcou}.2) and (H\ref{weakcou}.3)),
Engler and Lenhart \cite{EL}, Ishii and Koike \cite{IK1} 
prove existence, uniqueness and stability 
for the viscosity solutions  $(u_1,u_2)$ of \eqref{syst}, but they do not consider any approximation of the system.
We observe that these coupling assumptions
are similar to monotone conditions of single equations,
and play a crucial role in the establishment of  
the comparison principle and uniqueness result, and thus cannot be removed.

We show that, 
under the same assumptions of \cite{EL}, the speed of convergence 
of $(u^{\varepsilon}_1,u^{\varepsilon}_2)$
to  $(u_1,u_2)$ is $O(\varepsilon^{1/2})$ (see Theorem \ref{wc_thm1}).
For the sake of simplicity, we just focus on a system of two equations, 
but the general case can be treated in a similar way.

\medskip

Section \ref{secweak} is devoted to an analog of the {\it cell problem}
introduced by Lions, Papanicolaou and Varadhan \cite{LPV}.
More precisely, we consider the following quasi-monotone
weakly coupled system of Hamilton--Jacobi equations:
\begin{equation}
\left\{ \begin{aligned}
c_1 u_1- c_1 u_2 + H_1(x,Du_1) &=\overline{H}_1 \vspace{.05in}\\
-c_2 u_1 + c_2 u_2 + H_2(x,Du_1) &=\overline{H}_2 \\
\end{aligned} \right. 
\quad \mbox{in}~ {\mathbb T^n},
\label{tripz}
\end{equation}
also called the \textit{cell problem}.
Here $c_{1}$ and $c_{2}$ are positive constants
and $H_1, H_2 : \mathbb T^n \times \mathbb{R}^n \to \mathbb{R}$ are smooth,
while $u_1, u_2 : \mathbb T^n \to \mathbb R$ and $\overline{H}_1,\overline{H}_2 \in \mathbb{R}$ are unknowns.
Systems of this type have been studied by Camilli, Loreti and Yamada in \cite{CL} and \cite{CLY}, 
for uniformly convex Hamiltonians in a bounded domain. 
They arise naturally in optimal control and in large deviation theory for random
evolution processes.
Under a coercivity-like assumption on $H_1, H_2$ (see condition (H\ref{secweak}.1)),
we obtain the following new result.
\begin{Theorem} \label{cell_const}
Assume that (H\ref{secweak}.1) holds.
Then, there exists a pair of constants $(\overline{H}_1, \overline{H}_2)$ 
such that \eqref{tripz} admits a viscosity solution $(u_1,u_2) \in C(\mathbb T^n)^2$.
\end{Theorem}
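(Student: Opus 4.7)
The natural approach is the vanishing-discount method of Lions, Papanicolaou and Varadhan, adapted to the coupled setting. For each $\lambda > 0$ introduce the strictly monotone discounted system
\begin{equation*}
\begin{cases}
\lambda v_1^\lambda + c_1 v_1^\lambda - c_1 v_2^\lambda + H_1(x, Dv_1^\lambda) = 0,\\
\lambda v_2^\lambda - c_2 v_1^\lambda + c_2 v_2^\lambda + H_2(x, Dv_2^\lambda) = 0,
\end{cases}
\qquad \text{on } \mathbb T^n,
\end{equation*}
for which existence of a viscosity solution $(v_1^\lambda, v_2^\lambda)$ follows from the monotone framework of \cite{EL,IK1}, for instance via a further vanishing viscosity $\varepsilon \Delta v_i$ on the right and passage to the limit $\varepsilon \to 0$.

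I would then derive three uniform-in-$\lambda$ estimates. For the first, $\|\lambda v_i^\lambda\|_\infty \leq C$: if the overall maximum of $(v_1^\lambda, v_2^\lambda)$ is attained by $v_1^\lambda$ at some $x_1$, then $v_1^\lambda(x_1) \geq v_2^\lambda(x_1)$ and the subsolution inequality (with $Dv_1^\lambda(x_1) = 0$) gives $\lambda v_1^\lambda(x_1) \leq -c_1(v_1^\lambda - v_2^\lambda)(x_1) - H_1(x_1, 0) \leq \|H_1(\cdot, 0)\|_\infty$; the symmetric argument controls the minimum. For the second, subtracting the two equations yields $(\lambda + c_1 + c_2)(v_1^\lambda - v_2^\lambda) = H_2(x, Dv_2^\lambda) - H_1(x, Dv_1^\lambda)$, so once gradients are controlled, $\|v_1^\lambda - v_2^\lambda\|_\infty \leq C$. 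The third is a uniform Lipschitz bound $\|Dv_i^\lambda\|_\infty \leq C$ via the coercivity hypothesis~(H\ref{secweak}.1).

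The Lipschitz estimate is the principal obstacle. A direct Bernstein argument on $|Dv_1^\lambda|^2 + |Dv_2^\lambda|^2$ does not close, because the coupling produces uncancelled cross terms $Dv_1^\lambda \cdot Dv_2^\lambda$. A more robust route is the viscosity doubling-variables method: if $\sup_{x,y}\bigl(v_i^\lambda(x) - v_i^\lambda(y) - K|x-y|\bigr)$ were positive and attained in the interior for some large $K$, the sub/super-solution inequalities at the touching points provide matching gradients of size $\sim K$; inserting them into the equations and using coercivity together with the $L^\infty$ bounds already obtained forces a contradiction for $K$ sufficiently large, independent of $\lambda$.

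With these estimates in hand, fix a reference point $x_0 \in \mathbb T^n$ and set $\tilde u_i^\lambda := v_i^\lambda - v_i^\lambda(x_0)$. The family $\{\tilde u_i^\lambda\}$ is uniformly bounded and equi-Lipschitz; moreover $\{\lambda v_i^\lambda(x_0)\}$ and $\{(v_1^\lambda - v_2^\lambda)(x_0)\}$ are bounded. Along a subsequence $\lambda_k \to 0$, Arzel\`a--Ascoli gives $\tilde u_i^{\lambda_k} \to u_i$ uniformly; moreover $\lambda_k v_i^{\lambda_k} \to -c^\star$ uniformly on $\mathbb T^n$ (the two limits agree since $\lambda(v_1^\lambda - v_2^\lambda) \to 0$), and $(v_1^{\lambda_k} - v_2^{\lambda_k})(x_0) \to L$. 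Rewriting each equation in terms of $\tilde u_i^\lambda$ and passing to the viscosity limit, $(u_1, u_2)$ is a viscosity solution of \eqref{tripz} with
\[
\overline H_1 := c^\star - c_1 L
\qquad \text{and} \qquad
\overline H_2 := c^\star + c_2 L.
\]
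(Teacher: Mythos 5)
Your overall strategy — a discounted approximation with uniform estimates on $\|\lambda v_i^\lambda\|_\infty$, on the oscillation $v_1^\lambda - v_2^\lambda$, and on the gradients, followed by Arzel\`a--Ascoli and passage to the limit — is indeed the same blueprint the paper follows. The algebra at the end (identifying $\overline H_1 = c^\star - c_1 L$ and $\overline H_2 = c^\star + c_2 L$) is also correct. The genuine gap is in the Lipschitz estimate, and it matters.

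You discard the Bernstein argument because the cross term $Dv_1^\lambda\cdot Dv_2^\lambda$ does not cancel, and you propose instead a viscosity doubling-variables argument, appealing to ``coercivity'' of $H_j$. But (H\ref{secweak}.1) is not a standard coercivity assumption: it only asks that
\[
\omega_j |H_j(x,p)|^2 + D_x H_j(x,p)\cdot p - 16\,n\,\omega_j c_j^2 |p|^2 \;\longrightarrow\; +\infty,
\]
which involves $|H_j|$, not $H_j$, and does not force $H_j(x,p)\to+\infty$ (for instance $H_j(p) = -a|p|$ with $a>4\sqrt n\, c_j$ satisfies it). The doubling-variables contradiction you sketch — ``insert gradients of size $K$ into the equation and use coercivity to get a contradiction'' — needs $H_j(x,p)\to+\infty$; it does not close under (H\ref{secweak}.1). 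That hypothesis is specifically engineered for a second-order Bernstein argument: the paper regularizes \eqref{tripz} simultaneously with an $\varepsilon$-discount and an $\varepsilon^2\Delta$-viscosity (system \eqref{cell_reg}), so that at the interior maximum of $w^\varepsilon_1 = |Du^\varepsilon_1|^2/2$ one has
\[
\varepsilon^2 |D^2 u^\varepsilon_1|^2 \;\ge\; 2\omega_1\,\varepsilon^4 |\Delta u^\varepsilon_1|^2
= 2\omega_1\bigl[(c_1+\varepsilon)u^\varepsilon_1 - c_1 u^\varepsilon_2 + H_1\bigr]^2,
\]
and the dangerous cross term is handled by the a priori bounds on $\varepsilon u^\varepsilon_j$ and on $|u^\varepsilon_1(x^\varepsilon_0)-u^\varepsilon_2(x^\varepsilon_0)|$, the diameter $\sqrt n$ of $\mathbb T^n$, and Young's inequality — which is exactly where the constant $16 n\omega_j c_j^2$ in (H\ref{secweak}.1) comes from. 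Once the coupled $\varepsilon^2\Delta$ term has been sent to zero (as in your construction), this mechanism is gone. The paper's Remark after Theorem \ref{cell_thm2} makes exactly this distinction: under the usual coercivity assumption your first-order route works, but Theorem \ref{cell_const} as stated under (H\ref{secweak}.1) requires the elliptic Bernstein argument. So your proposal proves a correct theorem, but under a (genuinely) stronger hypothesis than the one asserted.
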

One can easily see that the pair $(\overline{H}_1, \overline{H}_2)$ is not unique (see Remark \ref{nouniq}).
Nevertheless, we have the following. 
\begin{Theorem} \label{cell_const2}
There exists a unique $\mu \in \mathbb R$ such that
\begin{equation*}
c_2 \overline{H}_1 + c_1\overline{H}_2 = \mu,
\end{equation*}
for every pair $(\overline{H}_1, \overline{H}_2) \in \mathbb{R}^2$ such that 
\eqref{tripz} admits a viscosity solution $(u_1,u_2) \in C(\mathbb T^n)^2$.
\end{Theorem}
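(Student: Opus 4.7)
The plan is to pin down $\mu$ by extracting four inequalities from the two systems at the extrema of the differences $\phi_i := u_i - v_i$ ($i=1,2$), where $(u_1, u_2, \overline H_1, \overline H_2)$ and $(v_1, v_2, \overline K_1, \overline K_2)$ are any two solution tuples of \eqref{tripz}. Writing $M_i := \max_{\mathbb T^n} \phi_i$ and $m_i := \min_{\mathbb T^n} \phi_i$, I would show
\begin{equation*}
c_1(M_1 - M_2) \,\le\, \overline H_1 - \overline K_1 \,\le\, c_1(m_1 - m_2), \qquad c_2(M_2 - M_1) \,\le\, \overline H_2 - \overline K_2 \,\le\, c_2(m_2 - m_1),
\end{equation*}
then multiply the first left inequality by $c_2$ and the second left inequality by $c_1$ and add, obtaining $\mu - \mu' \ge 0$; the analogous combination of the two right inequalities yields $\mu - \mu' \le 0$.

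For smooth $u_i, v_i$ each of the four estimates is transparent. At a maximizer $x_1^+$ of $\phi_1$ one has $Du_1(x_1^+) = Dv_1(x_1^+)$, so subtracting the first equation of \eqref{tripz} for $v$ from that for $u$ cancels the Hamiltonian and leaves $c_1 \phi_1(x_1^+) - c_1 \phi_2(x_1^+) = \overline H_1 - \overline K_1$; combining with $\phi_2(x_1^+) \le M_2$ gives $c_1(M_1 - M_2) \le \overline H_1 - \overline K_1$. The remaining three inequalities come from the same calculation at the minimum of $\phi_1$ and at the two extrema of $\phi_2$.

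To justify the argument for merely continuous viscosity solutions I would invoke the standard doubling of variables: for small $\sigma > 0$, pick a maximizer $(x_\sigma, y_\sigma)$ of $u_1(x) - v_1(y) - |x-y|^2/(2\sigma)$ on $\mathbb T^n \times \mathbb T^n$, apply the viscosity subsolution inequality for $u_1$ at $x_\sigma$ and the supersolution inequality for $v_1$ at $y_\sigma$ with common momentum $p_\sigma = (x_\sigma - y_\sigma)/\sigma$, and subtract. The coercivity hypothesis (H\ref{secweak}.1), together with the $L^\infty$ bounds on the solutions on the compact torus, forces $|p_\sigma|$ to remain bounded uniformly in $\sigma$; hence $H_1(x_\sigma, p_\sigma) - H_1(y_\sigma, p_\sigma) \to 0$, and the zero-order coupling terms $\pm c_i(u_i - u_{3-i})$ pass cleanly to the limit by continuity, reproducing the smooth inequality.

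The main technical obstacle is the doubling step: one must extract a common limit point $x_\sigma, y_\sigma \to x_1^+$ while simultaneously controlling the off-diagonal contribution $c_1 \phi_2(x_\sigma)$, which does \emph{not} vanish and must be carried through to the limit rather than discarded. The conceptual point behind the whole argument is that the weights $(c_2, c_1)$ used in the final combination work precisely because $(c_2, c_1)$ is a left null vector of the coupling matrix of \eqref{tripz}; this is also the structural reason why $c_2 \overline H_1 + c_1 \overline H_2$ is the unique scalar invariant accessible by this comparison-based approach, which fits Remark~\ref{nouniq}.
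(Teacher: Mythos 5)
Your argument is correct and takes a genuinely different route from the paper's. The paper normalizes to $c_1 = c_2 = 1$, argues by contradiction assuming $\lambda_1+\lambda_2 < \mu_1+\mu_2$, uses the constant-shift gauge freedom of Remark~\ref{nouniq} to reduce to $\lambda_i<\mu_i$ and then to $u_i > \widetilde u_i$ everywhere, and finally inserts an $\varepsilon w_i$ perturbation on the diagonal so that the new system satisfies the monotone coupling conditions (H\ref{weakcou}.2)--(H\ref{weakcou}.3); invoking the comparison principle of Engler--Lenhart and Ishii--Koike for that perturbed system yields $u_i < \widetilde u_i$, a contradiction. You instead derive four pointwise inequalities at the extrema of $\phi_i := u_i - v_i$ and contract them against the left null vector $(c_2,c_1)$ of the coupling matrix. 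Your version is more self-contained (it never appeals to the comparison theorem for monotone systems) and makes the structural role of $(c_2,c_1)$ explicit, which the paper hides by first passing to $c_1=c_2=1$. The paper's version is shorter precisely because it outsources all the viscosity technicalities to an established comparison theorem.

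Two caveats in your write-up. First, calling (H\ref{secweak}.1) a ``coercivity hypothesis'' is not accurate: the paper's own remark following (H\ref{secweak}.1) notes that it is \emph{weaker} than (H\ref{weakcou}.1), and it does not by itself force $H_j(x,p)\to+\infty$ as $|p|\to\infty$. So the bound on $|p_\sigma|$ you need in the doubling step does not come for free from (H\ref{secweak}.1); you would need either genuine superlinearity of $H_j$ in $p$, or to restrict to Lipschitz viscosity solutions (which is in fact what the vanishing-viscosity construction of Theorem~\ref{cell_const} delivers, via the gradient bound of Theorem~\ref{cell_thm1}). Second, the ``main technical obstacle'' you flag is actually routine: by compactness of $\mathbb T^n$ the pair $(x_\sigma,y_\sigma)$ subconverges to a common limit $x^*$ which automatically maximizes $\phi_1$, and the off-diagonal contribution $u_2(x_\sigma) - v_2(y_\sigma)$ tends to $\phi_2(x^*) \le M_2$ by continuity alone; you do not need to steer the doubling toward a pre-selected maximizer $x_1^+$, and nothing has to be ``carried through'' beyond what continuity gives.
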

Theorem \ref{cell_const2} can be rephrased  
by saying that there exists a unique $\overline{H} \in \mathbb{R}$ such that the system 
\begin{equation} \label{falsj}
\left\{ \begin{aligned}
c_1 u_1- c_1 u_2 + H_1(x,Du_1) &=\overline{H} \vspace{.05in}\\
-c_2 u_1 + c_2 u_2 + H_2(x,Du_1) &=\overline{H} \\
\end{aligned} \right. 
\quad \mbox{in}~ {\mathbb T^n},
\end{equation}
admits viscosity solutions $u_1,u_2 \in C(\mathbb T^n)$, with (see Remark \ref{Hbar})
$$
\overline{H}= \frac{\mu}{c_1+c_2} .
$$
Thus, this is the analogous to the uniqueness result of the effective Hamiltonian for the single equation case in \cite{LPV}.
Notice that this {\it cell problem} is the important basis for the study 
of homogenization and
large time behavior of weakly coupled systems
of Hamilton--Jacobi equations.

Besides, we also consider the regularized system
\begin{equation} 
\left\{ \begin{aligned}
(c_1+\varepsilon)u^\varepsilon_1 - c_1 u^\varepsilon_2 + H_1(x,Du^\varepsilon_1)
& = \varepsilon^2 \Delta u^\varepsilon_1
\vspace{.05in}\\
(c_2+\varepsilon) u^\varepsilon_2 - c_2 u^\varepsilon_1 + H_2(x,D u^\varepsilon_2 )
&=\varepsilon^2 \Delta u^\varepsilon_2 \\
\end{aligned} \right. 
\quad \mbox{in}~ \mathbb T^n,
\label{trip}
\end{equation}
and prove that both $\varepsilon u_1^{\varepsilon}$ and $\varepsilon u_2^{\varepsilon}$ 
converge uniformly to $-\overline{H}$ with speed of convergence $O (\varepsilon)$ (see Theorem~\ref{cell_thm2}).
We call $\overline{H}$ the effective Hamiltonian of the {\it cell problem}
for {the} weakly coupled system of Hamilton--Jacobi equations.

\medskip
In Section \ref{kilo}, we conclude the paper with the study of weakly coupled systems of obstacle type, namely
\begin{equation} \label{jk}
\left\{ \begin{aligned}
\max\{ u_1 - u_2 -\psi_1 , u_1 +H_1(x,D u_1) \}&=0 \quad\mbox{in}~ U, \vspace{.05in}\\
\max\{ u_2-u_1-\psi_2 , u_2+H_2(x,Du_2) \}&=0 \quad\mbox{in}~ U, \\
\end{aligned} \right.
\end{equation}
{with boundary conditions $u_1 = u_2=0$ on $\partial U$.}
Problems of this type appeared in \cite{DE} and \cite{CLY}.
Here $H_1, H_2: \overline{U} \times \mathbb{R}^n \to \mathbb{R}$ 
and $\psi_1, \psi_2 : \overline{U} \to \mathbb{R}$ are smooth, with $\psi_1, \psi_2 \geq \alpha > 0$.

In this case, although the two equations in \eqref{jk} are coupled just through the difference $u_1~-~u_2$
(\textit{weakly} coupled system), the problem turns out to be considerably more difficult 
than the corresponding scalar equation \eqref{qw}.
Indeed, we cannot show now the analogous of estimate \eqref{hfj}
as in Section~\ref{sectobs}.
For this reason, the hypotheses we require are stronger than in the scalar case.
Together with the usual hypotheses of coercivity and compatibility
(see conditions (H\ref{kilo}.2) and (H\ref{kilo}.4)), 
we have to assume that $H_1 (x,\cdot)$ and $H_2 (x,\cdot)$ are convex (see (H\ref{kilo}.1)),
and we also ask that $D_x H_1$ and $D_x H_2$ are bounded (see (H\ref{kilo}.3)).
Under these hypotheses, that are natural in optimal switching problems, 
we are able to establish several delicate estimates 
by crucially employing the adjoint method (Lemmas from \ref{gamprimeb} to \ref{dggt}), 
which then yield a rate of convergence (Theorem \ref{obs_speedfin}).
\end{subsection}

\begin{subsection}{Acknowledgments} 

The authors are grateful to Craig Evans and 
Fraydoun Rezakhanlou for very useful discussions on the subject of the paper.  
We thank the anonymous referee for many valuable comments and suggestions.
Diogo Gomes was partially supported by CAMGSD/IST through FCT Program POCTI - FEDER
and by grants PTDC/EEA-ACR/67020/2006, PTDC/MAT/69635/2006, and UTAustin/MAT/0057/2008. 
Filippo Cagnetti was partially supported by FCT through the CMU$|$Portugal program.
Hung Tran was partially supported by VEF fellowship.

\end{subsection}

\end{section}

\begin{section}{Obstacle problem} \label{sectobs}

In this section, we study the following obstacle problem
\begin{equation}
\left\{ \begin{aligned}
\max \{ u- \psi, u+H(x,Du)\}&=0 \quad \mbox{in}~ U\vspace{.05in}\\
u &= 0 \quad \mbox{on}~ \partial U,\\
\end{aligned} \right. 
\label{obs_eqn}
\end{equation}
where $\psi: \overline{U} \to \mathbb{R}$ and $H: {\overline{U}  \times \mathbb R^n} \to \mathbb R$
are smooth, with $\psi \ge 0$ on $\partial U$.
We also assume that
\begin{itemize}
\vspace{.1cm}
 \item[(H\ref{sectobs}.1)] {there exists $\beta >0$ such that}
$$
\lim_{|p| \to + \infty}  
\left( {\beta}|H(x,p)|^2 + D_x H(x,p) \cdot p \right)= 
\lim_{|p| \to + \infty} \frac{H(x,p)}{|p|}=+\infty 
\text{ uniformly in } x \in \overline U;
$$
 \item[(H\ref{sectobs}.2)] there exists a function $\Phi \in C^2 (U) \cap C^1 (\overline U)$ 
 such that $\Phi \leq \psi$ on $\overline U$, $\Phi=0$ on $\partial U$ and
 $$
 \Phi+H(x,D\Phi) <0\quad \mbox{in}~ \overline{U}.
 $$
\vspace{.1cm}
\end{itemize}
We observe that in the classical case $H(x,p) = \mathcal{H} (p)+V(x)$ with 
$$
\lim_{|p| \to + \infty} \frac{\mathcal{H} (p)}{|p|} = + \infty,
$$  
or when $H$ is superlinear in $p$ and $|D_xH(x,p)| \le C(1+|p|)$, then we immediately have (H\ref{sectobs}.1).
Assumption (H\ref{sectobs}.2) (stating, in particular, that $\Phi$ is a sub-solution of \eqref{obs_eqn}), 
will be used to derive the existence of solutions of \eqref{obs_eqn}, 
and to give a uniform bound for the gradient of solutions of the penalized equation below.
\begin{subsection}{The classical approach}
 
For every $\varepsilon >0$, the \textit{penalized} equation
of \eqref{obs_eqn} is given by
\begin{equation}
\left\{ \begin{aligned}
u^\varepsilon+H(x,Du^\varepsilon)+\gamma^\varepsilon(u^\varepsilon-\psi)
&= \varepsilon \Delta u^\varepsilon \quad \mbox{in}~ U, \vspace{.05in}\\
u^\varepsilon &= 0 \qquad \mbox{on}~ \partial U,\\
\end{aligned} \right. 
\label{obs_reg}
\end{equation}
where $\gamma^{\varepsilon}$ is defined by \eqref{gammaep}.
From \cite{L} it follows that under conditions (H\ref{sectobs}.1) and (H\ref{sectobs}.2), 
for every $\varepsilon > 0$ there exists
a smooth solution $u^\varepsilon$ to \eqref{obs_reg}.
The first result we establish is a uniform bound 
for the $C^1$-norm of the sequence $\{u^{\varepsilon}\}$.
\begin{Proposition} \label{obs1}
There exists a positive constant $C$, independent of $\varepsilon$, such that
\begin{equation*}
\|u^\varepsilon\|_{L^\infty}, \|Du^\varepsilon\|_{L^\infty} \le C.
\end{equation*}
\end{Proposition}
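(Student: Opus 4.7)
The plan is to combine the already-proven estimate \eqref{hfj} with a standard comparison argument (for the $L^{\infty}$ bound) and a Bernstein-type calculation (for the gradient bound). Observe first that \eqref{hfj} and the monotonicity of $\gamma$ immediately give $\|\gamma^{\varepsilon}(u^{\varepsilon}-\psi)\|_{L^{\infty}} \leq \gamma(C)$, so the penalization term itself is uniformly bounded in $L^{\infty}$, even though its derivative $(\gamma^{\varepsilon})'$ is only \emph{a priori} bounded by $1/\varepsilon$.

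For the $L^{\infty}$ bound, an upper bound follows from evaluating \eqref{obs_reg} at an interior maximum of $u^{\varepsilon}$: since $Du^{\varepsilon}=0$, $\Delta u^{\varepsilon}\leq 0$ and $\gamma^{\varepsilon}\geq 0$ there, the PDE yields $u^{\varepsilon}(x_{0}) \leq -H(x_{0},0) \leq C$. For the lower bound I would use the strict sub-solution $\Phi$ provided by (H\ref{sectobs}.2): since $\Phi\leq\psi$ on $\overline U$ the penalty $\gamma^{\varepsilon}(\Phi-\psi)$ vanishes identically, so the strict inequality $\Phi+H(x,D\Phi)<0$ makes $\Phi$ a sub-solution of the full viscous equation \eqref{obs_reg} for all sufficiently small $\varepsilon > 0$, and the comparison principle then forces $u^{\varepsilon}\geq \Phi\geq -\|\Phi\|_{L^\infty}$.

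For the gradient estimate I would run the Bernstein method on $w:=|Du^{\varepsilon}|^{2}$: differentiating \eqref{obs_reg} in $x_k$, multiplying by $u^{\varepsilon}_{x_k}$ and summing leads, at any interior maximum $x_{0}$ of $w$, to an inequality of the schematic form
\begin{equation*}
(1+(\gamma^{\varepsilon})')\,w(x_{0})\;+\;\varepsilon\,|D^{2}u^{\varepsilon}(x_{0})|^{2} \;\leq\; -D_{x}H\cdot Du^{\varepsilon}\;+\;(\gamma^{\varepsilon})'\,D\psi\cdot Du^{\varepsilon}.
\end{equation*}
The coercivity hypothesis (H\ref{sectobs}.1) lets one replace $-D_{x}H\cdot p$ by $\beta|H(x,p)|^{2}+C$, while \eqref{obs_reg} together with the $L^{\infty}$ bounds on $u^{\varepsilon}$ and $\gamma^{\varepsilon}$ gives $|H|\leq \varepsilon\sqrt{n}\,|D^{2}u^{\varepsilon}|+C$; for $\varepsilon$ small, the resulting $\varepsilon^{2}|D^{2}u^{\varepsilon}|^{2}$ contribution is absorbed into the favorable $\varepsilon|D^{2}u^{\varepsilon}|^{2}$ on the left. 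A Young inequality on the cross-term $(\gamma^{\varepsilon})'\,D\psi\cdot Du^{\varepsilon}\leq (\gamma^{\varepsilon})'w/2+(\gamma^{\varepsilon})'|D\psi|^{2}/2$ lets the $|D\psi|^{2}$ piece be absorbed into the favorable $(\gamma^{\varepsilon})'w$ on the left, yielding $w(x_{0})\leq C$. The case where the maximum of $w$ lies on $\partial U$ is handled with standard boundary barriers: $\Phi$ controls $\partial_{\nu}u^{\varepsilon}$ from above (since $\Phi\leq u^{\varepsilon}$ with equality on $\partial U$), and an explicit super-solution such as $M(1-e^{-\lambda d(x,\partial U)})$ with $M,\lambda$ chosen large enough controls it from below.

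The main obstacle is the penalization derivative $(\gamma^{\varepsilon})'$, which blows up like $1/\varepsilon$ and would destroy any naive absorption. The structural point that saves the argument is that $(\gamma^{\varepsilon})'$ multiplies \emph{simultaneously} the favorable quantity $w$ and the dangerous quantity $|D\psi|^{2}$, so the good term can always dominate regardless of the size of $(\gamma^{\varepsilon})'$; this is why Lemma~\ref{obs_lem4} (which bounds $\gamma^{\varepsilon}$ but not $(\gamma^{\varepsilon})'$) is nevertheless sufficient for a uniform $W^{1,\infty}$ estimate.
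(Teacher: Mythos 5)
Your proposal is correct and tracks the paper's proof very closely: maximum principle at an interior maximum for the upper $L^\infty$ bound, barrier construction (sub-solution $\Phi$ from below, super-solution $\mu\,d(x,\partial U)$ from above) for $\|Du^\varepsilon\|_{L^\infty(\partial U)}$, and Bernstein's method on $|Du^\varepsilon|^2$ for the interior gradient bound, closed via the coercivity in (H\ref{sectobs}.1) together with $\varepsilon\Delta u^\varepsilon = u^\varepsilon + H + \gamma^\varepsilon$ and the uniform bound on $\gamma^\varepsilon$ from Lemma~\ref{obs_lem4}. The one genuine (small) variation is in the lower $L^\infty$ bound: the paper evaluates the PDE at an interior minimum and bounds the $\gamma^\varepsilon$ term directly by Lemma~\ref{obs_lem4}, whereas you invoke the comparison principle against the sub-solution $\Phi$; both are valid, and the paper in fact uses your comparison anyway in the boundary-gradient step.

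One point of precision you should make explicit: the Young-inequality absorption of $(\gamma^\varepsilon)'|D\psi|^2$ into $(\gamma^\varepsilon)'w$ only goes through when $w(x_0)\ge |D\psi(x_0)|^2$, and likewise invoking (H\ref{sectobs}.1) to control $-D_xH\cdot Du^\varepsilon$ requires $|Du^\varepsilon(x_0)|$ to be large. The paper handles this by the explicit dichotomy: if $\|Du^\varepsilon\|_{L^\infty}\le\max\bigl(\|D\psi\|_{L^\infty},\|Du^\varepsilon\|_{L^\infty(\partial U)}\bigr)$ one is already done, and otherwise the interior maximum of $w^\varepsilon$ satisfies $w^\varepsilon(x_2)>\|D\psi\|_{L^\infty}^2/2$, so that $(\gamma^\varepsilon)'\bigl(Du^\varepsilon\cdot D\psi - |Du^\varepsilon|^2\bigr)\le 0$ and can simply be dropped. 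So your claim that ``the good term can always dominate regardless of the size of $(\gamma^\varepsilon)'$'' should be read as: the comparison between the two $(\gamma^\varepsilon)'$-terms is independent of the size of $(\gamma^\varepsilon)'$, and in the only case that is not already trivial the sign is favorable. As stated, it reads as if absorption works unconditionally, which it does not; writing the dichotomy out removes this ambiguity.
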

In order to prove Proposition \ref{obs1}, we need the following fundamental lemma:
\begin{Lemma} \label{obs_lem4}
There exists a constant $C>0$, independent of $\varepsilon$, such that
\begin{equation}
\max_{x \in \overline{U}} \gamma^\varepsilon (u^\varepsilon -\psi) \le C, \quad \quad \quad \max_{x \in \overline{U}}
\dfrac{u^\varepsilon -\psi}{\varepsilon} \le C.
\notag
\end{equation}
\end{Lemma}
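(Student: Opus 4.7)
The plan is to apply the maximum principle to $u^\varepsilon - \psi$ at its global maximum over $\overline{U}$, combined with a preliminary uniform lower bound on $u^\varepsilon$. The lower bound must be established first because this lemma is used to deduce the $L^\infty$ bound of Proposition~\ref{obs1}, and so I cannot assume such a bound here.

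\textbf{Preliminary lower bound.} I would first show $u^\varepsilon \ge -M$ on $\overline{U}$ for some $M > 0$ independent of $\varepsilon$, by comparison with a constant sub-solution. Choose $M$ large enough that $M \ge \max_{\overline U} |\psi|$ and $-M + H(x,0) \le 0$ for every $x \in \overline U$. Then the constant $v \equiv -M$ satisfies $v - \psi \le 0$, so $\gamma^\varepsilon(v-\psi)=0$, and
\[
 v + H(x,Dv) + \gamma^\varepsilon(v-\psi) - \varepsilon \Delta v \;=\; -M + H(x,0) \;\le\; 0 \quad\text{in } U,
\]
with $v = -M \le 0$ on $\partial U$. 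A standard comparison argument for \eqref{obs_reg} yields $u^\varepsilon \ge -M$.

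\textbf{Maximum principle argument.} Let $x_0 \in \overline U$ be a maximizer of $u^\varepsilon - \psi$. If $x_0 \in \partial U$, then $(u^\varepsilon-\psi)(x_0) = -\psi(x_0) \le 0$ since $\psi \ge 0$ on $\partial U$, hence $u^\varepsilon - \psi \le 0$ throughout $\overline U$ and both conclusions are immediate. Otherwise $x_0 \in U$, so $D u^\varepsilon(x_0) = D\psi(x_0)$ and $\Delta u^\varepsilon(x_0) \le \Delta \psi(x_0)$. Substituting in \eqref{obs_reg} at $x_0$ and using the lower bound from the previous step,
\[
 \gamma^\varepsilon\bigl(u^\varepsilon(x_0)-\psi(x_0)\bigr) \;\le\; \varepsilon\, \Delta\psi(x_0) - u^\varepsilon(x_0) - H\bigl(x_0, D\psi(x_0)\bigr) \;\le\; C,
\]
for a constant $C$ depending only on $\psi$, $H$, $M$, and an a priori upper bound on $\varepsilon$. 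Since $\gamma^\varepsilon$ is monotone and $x_0$ is a maximizer, this yields the first estimate of the lemma on all of $\overline U$. The second estimate then follows by inverting $\gamma$: since $\gamma$ is strictly increasing on $(0,\infty)$ with $\gamma(s)\to+\infty$, there exists $s^\ast$ with $\gamma(s^\ast)=C$, and $\gamma((u^\varepsilon-\psi)/\varepsilon) \le C$ forces $(u^\varepsilon-\psi)/\varepsilon \le s^\ast$ wherever the left-hand side is positive (and the bound is trivial otherwise).

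I do not expect a serious obstacle here. The only mildly delicate point is that no $L^\infty$ bound for $u^\varepsilon$ is yet available, which is exactly why the constant sub-solution step is carried out before invoking the first- and second-order maximum conditions.
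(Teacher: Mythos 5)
Your proof is correct, but it takes a slightly different route than the paper's and is a bit longer than necessary. Both proofs evaluate \eqref{obs_reg} at an interior maximizer $x_1$ of $u^\varepsilon - \psi$ and use $Du^\varepsilon(x_1)=D\psi(x_1)$, $\Delta u^\varepsilon(x_1)\le\Delta\psi(x_1)$. You then need to control the leftover term $-u^\varepsilon(x_1)$ on the right-hand side, so you insert a preliminary step establishing $u^\varepsilon \ge -M$ by comparison with the constant sub-solution $v\equiv -M$. That step is valid — the operator is proper ($\partial_u F = 1+(\gamma^\varepsilon)' \ge 1$), so comparison holds — but it can be avoided entirely. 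The paper instead writes $-u^\varepsilon(x_1) = -(u^\varepsilon(x_1)-\psi(x_1)) - \psi(x_1)$ and notes that $u^\varepsilon(x_1)-\psi(x_1)>0$ at the interior maximizer (otherwise $\gamma^\varepsilon\equiv 0$ and there is nothing to prove), so that term has a favorable sign and can be dropped. This yields
\begin{equation*}
\gamma^\varepsilon\bigl(u^\varepsilon(x_1)-\psi(x_1)\bigr)
\;\le\; \varepsilon\,\Delta\psi(x_1) - H\bigl(x_1,D\psi(x_1)\bigr) - \psi(x_1)
\;\le\; \max_{\overline U}\bigl(|\Delta\psi|+|H(x,D\psi)|+|\psi|\bigr),
\end{equation*}
with no reference to the size of $u^\varepsilon$. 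Your version buys nothing extra here (you already need $H$ smooth and $\overline U$ compact either way), so the paper's grouping is the sharper move: it makes the lemma genuinely self-contained and makes clear that the bound depends only on $\psi$ and $H$. Everything else in your argument — handling the boundary maximizer case, inverting $\gamma$ to deduce the second estimate — matches the paper.
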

\begin{proof}
We only need to show that $\max_{x \in \overline{U}} \gamma^\varepsilon (u^\varepsilon -\psi) \le C$,
since then the second estimate follows directly by the definition of $\gamma^\varepsilon$.
Since $u^\varepsilon - \psi \le 0$ on $\partial U$, we have $\max_{x \in \partial U} \gamma^\varepsilon
(u^\varepsilon -\psi) = 0$.

Now, if $\max_{x \in \overline{U}} \gamma^\varepsilon (u^\varepsilon -\psi)=0$, then we are done.
Thus, let us assume that there exists $x_1 \in U$ such that 
$\max_{x \in \overline{U}} \gamma^\varepsilon (u^\varepsilon -\psi)
=\gamma^\varepsilon (u^\varepsilon -\psi)(x_1) >0$.
Since $\gamma^\varepsilon$ is increasing, we also have 
$\max_{x \in U} (u^\varepsilon -\psi) = u^\varepsilon (x_1)- \psi (x_1)$.
Thus, using \eqref{obs_reg}, by the Maximum principle
\begin{align*} 
&(u^\varepsilon(x_1) - \psi(x_1)) + \gamma^\varepsilon (u^\varepsilon(x_1) - \psi(x_1)) 
= \varepsilon \Delta u^{\varepsilon} (x_1) - H(x_1, D u^{\varepsilon} (x_1)) - \psi(x_1) \\
& \le \varepsilon \Delta \psi(x_1)  - H(x_1, D \psi(x_1)) - \psi(x_1).
\end{align*}
Since $u^\varepsilon(x_1) - \psi(x_1)>0$,
\begin{equation}
\gamma^\varepsilon (u^\varepsilon(x_1) - \psi(x_1)) \le \max_{x \in \overline{U}} (|\Delta \psi| +  |H(x,D \psi)|+
|\psi(x)|) \le C,
\notag
\end{equation}
for any $\varepsilon<1$, and this concludes the proof.
\end{proof}

\begin{proof}[Proof of Proposition \ref{obs1}]
Suppose there exists $x_0 \in U$ such that $u^{\varepsilon} (x_0) = \max_{x \in \overline{U}} u^{\varepsilon} (x)$.
Then, since $\Delta u^\varepsilon (x_0)\leq~0$ and using the fact that $\gamma^{\varepsilon} \geq 0$
\begin{equation*} \begin{split}
u^\varepsilon (x_0) &= \varepsilon \Delta u^\varepsilon (x_0)
- H(x_0, 0 ) - \gamma^\varepsilon(u^\varepsilon (x_0) -\psi (x_0)) \\
&\leq - H(x_0, 0 ) \leq \max_{x \in \overline{U}} \left( - H(x, 0 ) \right) \leq C.
\end{split} \end{equation*}
Let now $x_1 \in U$ be such that $u^{\varepsilon} (x_1) = \min_{x \in \overline{U}} u^{\varepsilon} (x_1)$.
Then, using Lemma~\ref{obs_lem4},
\begin{equation*} \begin{split}
u^\varepsilon (x_1) &= \varepsilon \Delta u^\varepsilon (x_1)
- H(x_1, 0 ) - \gamma^\varepsilon(u^\varepsilon (x_1) -\psi (x_1)) \\
&\geq - H(x_1, 0 ) - \gamma^\varepsilon(u^\varepsilon (x_1) -\psi (x_1)) \\
&\geq \min_{x \in \overline{U}} 
\left( - H(x, 0 ) - \gamma^\varepsilon(u^\varepsilon (x) -\psi (x)) \right) \geq -C.
\end{split} \end{equation*}
This shows that $\| u^{\varepsilon} \|_{L^{\infty}}$ is bounded.

To prove that $\| Du^{\varepsilon} \|_{L^{\infty}}$ is bounded independently of $\varepsilon$, 
we first need to prove that $\| Du^{\varepsilon} \|_{L^{\infty}(\partial U)}$ is bounded
by constructing appropriate barriers.

Let $\Phi$ be as in (H2.2). For $\varepsilon$ small enough, we have that
$$
\Phi+H(x,D\Phi)+\gamma^\varepsilon(\Phi-\psi) <\varepsilon \Delta \Phi,
$$
and $\Phi=0$ on $\partial U$. 
Therefore, $\Phi$ is a sub-solution of \eqref{obs_reg}. By the comparison principle, $u^\varepsilon \ge \Phi$ in~$U$.

Let now $d(x)=\text{dist}(x,\partial U)$. 
It is well-known that for some $\delta>0$ $d \in C^2(U_\delta)$ and $|Dd|=1$ in $U_\delta$, 
where $U_\delta :=\{x\in U:~d(x) <\delta\}$.
For $\mu>0$ large enough, the uniform bound on $\|u^\varepsilon\|_{L^\infty}$ yields
$v:=\mu  d \ge u^\varepsilon$ on $\partial U_\delta $. Assumption (H\ref{sectobs}.1) then implies
$$
v+H(x,Dv)+\gamma^\varepsilon(v-\psi) - \varepsilon \Delta v \ge H(x,\mu Dd) -C \mu \ge 0,
$$ 
for  $\mu$ is sufficiently large.
So the comparison principle gives us that $\Phi \le u^\varepsilon \le v$ in $ U_\delta$. 
Thus, since $\nu$ is the \textit{outer} unit normal to $\partial U$, and $\Phi= u^\varepsilon= v = 0$ on $\partial U$, we have
$$
\dfrac{\partial v}{\partial \nu}(x) \le \dfrac{\partial u^\varepsilon}{\partial \nu}(x) \le \dfrac{\partial \Phi}{\partial \nu}(x),\quad \text{for } x\in \partial U.
$$
Hence, we obtain $\| Du^{\varepsilon} \|_{L^{\infty}(\partial U)} \leq C$.

Next, let us set $w^\varepsilon = \dfrac{|Du^{\varepsilon}|^2}{2}$. 
By a direct computation one can see that
\begin{equation} \label{obs2}
2(1+(\gamma^\varepsilon) ' )w^\varepsilon + D_p H \cdot D w^\varepsilon 
+ D_xH\cdot Du^\varepsilon-(\gamma^\varepsilon)' Du^\varepsilon\cdot D\psi = \varepsilon \Delta w^\varepsilon -
\varepsilon |D^2 u^\varepsilon|^2.
\end{equation}
If $\| Du^{\varepsilon} \|_{L^{\infty}} \leq \max (\| D \psi \|_{L^{\infty}},\| Du^{\varepsilon} \|_{L^{\infty}(\partial U)})$ then we are done.\\
Otherwise, $ \max (\| D \psi \|_{L^{\infty}},\| Du^{\varepsilon} \|_{L^{\infty}(\partial U)}) < \| D u^{\varepsilon} \|_{L^{\infty}}$.
We can choose $x_2 \in U$ such that $w^{\varepsilon} (x_2) = \max_{x \in \overline{U}} w^{\varepsilon} (x)$.
Then, using \eqref{obs2}
\begin{equation} \label{1k} \begin{split}
\varepsilon |D^2 u^\varepsilon|^2  (x_2)
&= \varepsilon \Delta w^\varepsilon (x_2)
- 2 w^\varepsilon (x_2) - D_xH (x_2 , Du^\varepsilon (x_2)) \cdot Du^\varepsilon (x_2) \\
&+ (\gamma^\varepsilon)' 
\left( Du^\varepsilon (x_2) \cdot D\psi (x_2) - |D u^{\varepsilon}|^2 (x_2) \right) \\
&\leq - D_xH (x_2 , Du^\varepsilon (x_2)) \cdot Du^\varepsilon (x_2).
\end{split} \end{equation}
Moreover, for $\varepsilon$ sufficiently small we have
$$
 \varepsilon |D^2 u^\varepsilon|^2  (x_2)
\geq 2 \beta \varepsilon^2 |\Delta u^{\varepsilon}  (x_2)|^2
= 2 \beta \left[ u^\varepsilon  (x_2) + H(x_2 ,Du^\varepsilon  (x_2))
+\gamma^\varepsilon(u^\varepsilon  (x_2) - \psi  (x_2)) \right]^2.
$$
{By Young's inequality,
\begin{equation} \label{2k} \begin{split}
\varepsilon |D^2 u^\varepsilon|^2  (x_2)
&\geq \beta |  H(x_2 ,Du^\varepsilon  (x_2)) |^2 
-  2 \beta  \left[ u^\varepsilon  (x_2) +  \gamma^\varepsilon(u^\varepsilon  (x_2) - \psi  (x_2)) \right]^2 \\
&\geq \beta |  H(x_2 ,Du^\varepsilon  (x_2)) |^2 - C_{\beta}, 
\end{split} \end{equation}
for some positive constant $C_{\beta}$, 
where we used Lemma \ref{obs_lem4} for the last inequality.
Collecting \eqref{1k} and \eqref{2k}
\begin{equation*}
\beta |  H(x_2 ,Du^\varepsilon  (x_2)) |^2 
+ D_xH (x_2 , Du^\varepsilon (x_2)) \cdot Du^\varepsilon (x_2) \leq C_{\beta}.
\end{equation*}
Recalling hypothesis (H\ref{sectobs}.1),} we must have
\begin{equation*}
\| D u^{\varepsilon} \|_{L^{\infty}} = | Du^\varepsilon  (x_2) | \leq C.
\end{equation*}

\end{proof}
Thanks to Proposition \ref{obs1} one can show that, up to subsequences, 
$u^{\varepsilon}$ converges uniformly 
to a viscosity solution $u$ of the obstacle problem (\ref{obs_eqn}).

\end{subsection}

\begin{subsection}{Proof of Theorem \ref{obs_speed}}

We now study the speed of convergence.

\noindent
To prove our theorem we need several steps.

{\bf Adjoint method:} 
The formal linearized operator $L^\varepsilon: C^2 (U) \to C (U)$
corresponding to \eqref{obs_reg} is given by
$$
L^\varepsilon z :=(1+(\gamma^\varepsilon)') z  + D_pH\cdot Dz -\varepsilon \Delta z.
$$
We will now introduce the adjoint PDE corresponding to $L^\varepsilon$.
Let $x_0 \in U$ be fixed.
We denote by $\sigma^\varepsilon$ the solution of:
\begin{equation}
\left\{ \begin{aligned}
(1+(\gamma^\varepsilon)')\sigma^\varepsilon-\mbox{div}(D_pH\sigma^\varepsilon)
&=\varepsilon \Delta \sigma^\varepsilon+ \delta_{x_0},\ \qquad \qquad&\mbox{in}~ U ,\vspace{.05in}\\
\sigma^\varepsilon &= 0, \qquad \qquad\qquad &\mbox{on}~ \partial U, \\
\end{aligned} \right. 
\label{obs_adj}
\end{equation}
where $\delta_{x_0}$ stands for the Dirac measure concentrated in $x_0$.
In order to show existence and uniqueness of $\sigma^\varepsilon$, 
we have to pass to a further adjoint equation.
Let $f \in C (U)$ be fixed. 
Then, we denote by $v$ the solution to
 \begin{equation}
\left\{ \begin{aligned}
(1+(\gamma^\varepsilon)')v+D_pH\cdot Dv&=\varepsilon \Delta v+f, \qquad \qquad&\mbox{in}~ U ,\vspace{.05in}\\
v &= 0, \qquad \qquad\qquad &\mbox{on}~ \partial U. \\
\end{aligned} \right. 
\label{obs3}
\end{equation}
When $f \equiv 0$, by using the Maximum Principle one can show that $v \equiv 0$
is the unique solution to \eqref{obs3}. 
Thus, by the Fredholm Alternative we infer that \eqref{obs_adj}
admits a unique solution $\sigma^\varepsilon$.
Moreover, one can also prove that $\sigma^\varepsilon \in C^\infty(U\setminus \{x_0\})$.
Some additional properties of $\sigma^\varepsilon$ are given by the following lemma.
\begin{Lemma}[Properties of $\sigma^\varepsilon$] \label{obs_lem2}
Let $\nu$ denote the outer unit normal to $\partial U$. Then, 
\begin{itemize}
\item[(i)] $\sigma^\varepsilon \ge 0$ on $\overline{U}$.  
In particular, $\dfrac{\partial \sigma^\varepsilon}{\partial \nu} \le 0$ on $\partial U$.
\item[(ii)] The following equality holds: 
\begin{equation}
\int_U (1+(\gamma^\varepsilon)') \, \sigma^\varepsilon \,dx =1+\varepsilon \int_{\partial U} \dfrac{\partial
\sigma^\varepsilon}{\partial \nu}\, dS.
\notag
\end{equation}
In particular,
$$
\varepsilon \int_{\partial U} \left| \dfrac{\partial \sigma^\varepsilon}{\partial \nu} \right| \,dS \le 1.
$$
\end{itemize}
\end{Lemma}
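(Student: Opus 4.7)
Both statements follow by using the adjoint relationship between \eqref{obs_adj} and \eqref{obs3}, combined with the fact that $\sigma^\varepsilon$ vanishes on $\partial U$. The key computational input is one integration by parts, and the main conceptual input is the scalar maximum principle applied to \eqref{obs3}.

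\textbf{Part (i).} The plan is to prove nonnegativity by duality. For an arbitrary $f \in C(\overline{U})$ with $f \geq 0$, let $v$ be the corresponding solution of \eqref{obs3}. Since the zero-order coefficient $1+(\gamma^\varepsilon)'$ is strictly positive and $v=0$ on $\partial U$, the standard maximum principle for uniformly elliptic equations yields $v \geq 0$ in $\overline U$; if $v$ had a negative interior minimum at $x^*$, then at $x^*$ one would have $-\varepsilon\Delta v(x^*)\le 0$, $D_pH\cdot Dv(x^*)=0$ and $(1+(\gamma^\varepsilon)')v(x^*)<0$, contradicting $f(x^*)\ge 0$. Next, I multiply \eqref{obs_adj} by $v$, integrate over $U$, and integrate by parts twice; using $\sigma^\varepsilon=v=0$ on $\partial U$ all boundary contributions disappear, and what remains is the duality identity
$$
\int_U f\,\sigma^\varepsilon\,dx \;=\; v(x_0) \;\geq\; 0.
$$
Since this holds for every $f\ge 0$ in $C(\overline U)$, we conclude $\sigma^\varepsilon \geq 0$ on $U$. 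Because $\sigma^\varepsilon\equiv 0$ on $\partial U$, the outward normal derivative satisfies $\partial\sigma^\varepsilon/\partial\nu \leq 0$ on $\partial U$.

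\textbf{Part (ii).} The plan is simply to integrate \eqref{obs_adj} over $U$. The divergence and Laplacian terms convert to boundary integrals: using $\sigma^\varepsilon=0$ on $\partial U$, the flux $\int_{\partial U}(D_pH\cdot\nu)\sigma^\varepsilon\,dS$ vanishes, whereas the diffusive term produces $\varepsilon\int_{\partial U}\partial\sigma^\varepsilon/\partial\nu\,dS$. The right-hand side $\delta_{x_0}$ integrates to $1$, yielding the stated identity
$$
\int_U (1+(\gamma^\varepsilon)')\sigma^\varepsilon\,dx \;=\; 1 + \varepsilon\int_{\partial U}\frac{\partial\sigma^\varepsilon}{\partial\nu}\,dS.
$$
Combining with (i), the left-hand side is nonnegative while $\partial\sigma^\varepsilon/\partial\nu\le 0$, so $|\partial\sigma^\varepsilon/\partial\nu| = -\partial\sigma^\varepsilon/\partial\nu$ on $\partial U$ and rearranging gives $\varepsilon\int_{\partial U}|\partial\sigma^\varepsilon/\partial\nu|\,dS\le 1$.

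\textbf{Main obstacle.} The only subtle point is handling the Dirac distribution on the right of \eqref{obs_adj}: the test function $v$ from \eqref{obs3} must be regular enough at $x_0$ for $v(x_0)$ to make sense, which is fine since $f\in C(\overline U)$ forces $v\in C^2(U)\cap C(\overline U)$ by standard elliptic regularity. The integration by parts in Part (i) is valid because $\sigma^\varepsilon$ is smooth away from $x_0$ and integrable near $x_0$ (for $n\ge 2$ the fundamental-solution bound gives $\sigma^\varepsilon\in L^1$), so one may cut out a small ball around $x_0$, integrate by parts outside, and let the radius tend to zero — the boundary contributions on the small sphere vanish in the limit since $v$ is bounded there.
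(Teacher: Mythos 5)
Your proof is correct and follows essentially the same route as the paper: establish $f\ge 0\Rightarrow v\ge 0$ for \eqref{obs3} by the maximum principle, derive $\int_U f\sigma^\varepsilon\,dx=v(x_0)\ge 0$ by duality to get $\sigma^\varepsilon\ge 0$, then integrate \eqref{obs_adj} over $U$ for part (ii). The extra remarks on the regularity of $v$ near $x_0$ and on the integrability of $\sigma^\varepsilon$ are a welcome clarification but do not change the argument.
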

\begin{proof}
First of all, consider equation \eqref{obs3} and observe that 
\begin{equation} \label{vpos}
f \geq 0 \Longrightarrow v \geq 0.
\end{equation}
Indeed, assume $f \geq 0$ and let $\overline{x} \in \overline{U}$ be such that
\begin{equation*}
v (\overline{x}) = \min_{x \in \overline{U}} v (x).
\end{equation*}
We can assume that $\overline{x} \in U$, since otherwise clearly $v \geq 0$. 
Then, for every $x \in U$
\begin{equation*}
((1+(\gamma^\varepsilon)') v (\overline{x}) = \varepsilon \Delta v (\overline{x}) + f (\overline{x}) \geq 0,
\end{equation*}
and \eqref{vpos} follows, since $1+(\gamma^\varepsilon)' > 0$ .

Now, multiply equation \eqref{obs_adj} by $v$ and integrate by parts, obtaining 
$$
\int_U f \sigma^\varepsilon\, dx =  v(x_0).
$$
Taking into account \eqref{vpos}, from last relation we infer that
\begin{equation*}
\int_U f \sigma^\varepsilon\, dx \geq 0 \qquad \text{ for every }f \geq 0, 
\end{equation*}
and this implies $\sigma^\varepsilon \ge 0$.\\
To prove (ii), we integrate \eqref{obs_adj} over $U$, to get
\begin{align}
\int_U (1+(\gamma^\varepsilon)')\sigma^\varepsilon\, dx 
= & \int_U \mbox{div}(D_pH \sigma^\varepsilon) \, dx +\varepsilon \int_U \Delta \sigma^\varepsilon \,dx+1\notag
\\
=& \int_{\partial U} (D_p H \cdot \nu) \sigma^\varepsilon \,dS 
+ \varepsilon \int_{\partial U} \dfrac{\partial \sigma^\varepsilon}{\partial \nu} \,dS+1 
=\varepsilon \int_{\partial U} \dfrac{\partial \sigma^\varepsilon}{\partial \nu} \,dS + 1 ,\notag
\end{align}
where we used the fact that $\sigma^\varepsilon=0$ on $\partial U$.
\end{proof}

Using the adjoint equation, we have the following new estimate.
\begin{Lemma}
\label{obs_lem3}
There exists $C>0$, independent of $\varepsilon > 0$, such that
\begin{equation}
\frac{1}{2} \int_U (1 + ( \gamma^{\varepsilon} )')|D u^\varepsilon|^2 \sigma^\varepsilon \,dx
+ \varepsilon \int_U |D^2 u^\varepsilon|^2 \sigma^\varepsilon \,dx\le C.
\label{obs4}
\end{equation}
\end{Lemma}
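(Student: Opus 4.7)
The plan is to test the pointwise identity \eqref{obs2} for $w^\varepsilon=|Du^\varepsilon|^2/2$ against the adjoint density $\sigma^\varepsilon$, and then integrate by parts so that the adjoint operator falls on $w^\varepsilon$. In this way, equation \eqref{obs_adj} will produce exactly the left-hand side of \eqref{obs4} modulo a Dirac contribution at $x_0$, a boundary term on $\partial U$, and the $D_xH\cdot Du^\varepsilon$ and $(\gamma^\varepsilon)'Du^\varepsilon\cdot D\psi$ terms, each of which I will bound separately.

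First I would multiply \eqref{obs2} by $\sigma^\varepsilon$ and integrate over $U$. Using $\sigma^\varepsilon=0$ on $\partial U$, integration by parts yields
\begin{equation*}
\int_U D_pH\cdot Dw^\varepsilon\,\sigma^\varepsilon\,dx = -\int_U w^\varepsilon\,\mathrm{div}(D_pH\,\sigma^\varepsilon)\,dx,
\end{equation*}
while two applications of the divergence theorem give
\begin{equation*}
\int_U \varepsilon\Delta w^\varepsilon\,\sigma^\varepsilon\,dx
=\int_U \varepsilon w^\varepsilon\,\Delta\sigma^\varepsilon\,dx
-\varepsilon\int_{\partial U} w^\varepsilon\,\frac{\partial\sigma^\varepsilon}{\partial\nu}\,dS.
\end{equation*}
After these manipulations the interior integrand acting on $w^\varepsilon$ becomes
$2(1+(\gamma^\varepsilon)')\sigma^\varepsilon - \mathrm{div}(D_pH\,\sigma^\varepsilon)-\varepsilon\Delta\sigma^\varepsilon$,
which by \eqref{obs_adj} equals $(1+(\gamma^\varepsilon)')\sigma^\varepsilon+\delta_{x_0}$.

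Collecting these, I arrive at the identity
\begin{equation*}
\int_U (1+(\gamma^\varepsilon)')\,w^\varepsilon\sigma^\varepsilon\,dx
+\varepsilon\int_U |D^2u^\varepsilon|^2\sigma^\varepsilon\,dx
= -w^\varepsilon(x_0)
-\varepsilon\int_{\partial U} w^\varepsilon\,\frac{\partial\sigma^\varepsilon}{\partial\nu}\,dS
-\int_U \bigl(D_xH\cdot Du^\varepsilon-(\gamma^\varepsilon)'Du^\varepsilon\cdot D\psi\bigr)\sigma^\varepsilon\,dx.
\end{equation*}
Now I bound each term of the right-hand side. Proposition \ref{obs1} gives $|w^\varepsilon(x_0)|\le C$ and $\|Du^\varepsilon\|_{L^\infty}\le C$, so the first term is $O(1)$. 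For the boundary integral, Lemma \ref{obs_lem2}(i) and Proposition \ref{obs1} give
\begin{equation*}
\Bigl|\varepsilon\int_{\partial U}w^\varepsilon\frac{\partial\sigma^\varepsilon}{\partial\nu}\,dS\Bigr|
\le \|w^\varepsilon\|_{L^\infty}\,\varepsilon\int_{\partial U}\Bigl|\frac{\partial\sigma^\varepsilon}{\partial\nu}\Bigr|\,dS\le C.
\end{equation*}

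For the last integral, Lemma \ref{obs_lem2}(ii) (together with $\partial_\nu\sigma^\varepsilon\le 0$ on $\partial U$) implies
\begin{equation*}
\int_U (1+(\gamma^\varepsilon)')\,\sigma^\varepsilon\,dx\le 1,
\end{equation*}
hence both $\int_U\sigma^\varepsilon\,dx\le 1$ and $\int_U(\gamma^\varepsilon)'\sigma^\varepsilon\,dx\le 1$; combined with the $L^\infty$ bounds on $Du^\varepsilon$, $D_xH(\cdot,Du^\varepsilon)$ and $D\psi$, this controls the remaining term by a constant. Since $1+(\gamma^\varepsilon)'>0$ and $\sigma^\varepsilon\ge 0$ (Lemma \ref{obs_lem2}(i)), the left-hand side is non-negative and we obtain exactly \eqref{obs4}. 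The main subtlety is recognizing that the factor $2$ on the first term of \eqref{obs2} is precisely what is needed so that applying the adjoint equation leaves behind $(1+(\gamma^\varepsilon)')\sigma^\varepsilon$ instead of a term with the wrong sign; the boundary piece $\varepsilon\int_{\partial U}w^\varepsilon\,\partial_\nu\sigma^\varepsilon\,dS$, which at first sight is unbounded in $\varepsilon$, is rescued by the integral bound in Lemma \ref{obs_lem2}(ii).
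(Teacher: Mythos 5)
Your argument is exactly the paper's: multiply \eqref{obs2} by $\sigma^\varepsilon$, integrate by parts so that the adjoint operator acts on $w^\varepsilon$, apply \eqref{obs_adj} to produce the $(1+(\gamma^\varepsilon)')\sigma^\varepsilon$ term plus the Dirac contribution at $x_0$, and then bound the resulting boundary and lower-order terms via Proposition \ref{obs1} and Lemma \ref{obs_lem2}. The computation and the use of the auxiliary lemmas match the paper's proof; you have merely spelled out the intermediate steps more explicitly.
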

\begin{proof}
Multiplying \eqref{obs2} by $\sigma^\varepsilon$ and integrating by parts, using equation \eqref{obs_adj} we
get
\begin{align*}
&\frac{1}{2} \int_U (1 + ( \gamma^{\varepsilon} )')|D u^\varepsilon|^2 \sigma^\varepsilon\, dx
+ \varepsilon \int_U |D^2 u^\varepsilon|^2 \sigma^\varepsilon \,dx \\
&\hspace{.5cm}= - w^\varepsilon (x_0) 
- \int_{U} \left[ D_x H \cdot D u^{\varepsilon}  - ( \gamma^{\varepsilon} )' D \psi \cdot D u^{\varepsilon}
\right] \sigma^{\varepsilon} \, dx
- \varepsilon \int_{\partial U} w^{\varepsilon} \frac{\partial \sigma^{\varepsilon}}{\partial \nu} \, d S.
\end{align*}
Thanks to Lemma \ref{obs_lem2} and Proposition \ref{obs1} 
(which, in particular, implies 
$\| D_x H (\cdot , D u^{\varepsilon} (\cdot)) \|_{L^{\infty} (U)} \leq C$)
the conclusion follows.
\end{proof}

\noindent
Relation (\ref{obs4}) shows that we have a good control of the Hessian $D^2 u^\varepsilon$ 
in the support of $\sigma^\varepsilon$.

We finally have the following result, which immediately implies Theorem \ref{obs_speed}.
\begin{Lemma}
\label{obs_thm1}
There exists $C>0$, independent of $\varepsilon$, such that
\begin{equation}
\max_{x \in \overline{U}} \left| u^\varepsilon_{\varepsilon} (x) \right| \le
\dfrac{C}{\varepsilon^{1/2}}.
\notag
\end{equation}
\end{Lemma}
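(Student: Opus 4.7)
The plan is to differentiate the penalized equation \eqref{obs_reg} with respect to $\varepsilon$, then use the adjoint PDE \eqref{obs_adj} to obtain a representation formula for $u^\varepsilon_\varepsilon(x_0)$, and finally estimate each term by combining Lemmas \ref{obs_lem4}, \ref{obs_lem2}, and \ref{obs_lem3}.

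First I would differentiate \eqref{obs_reg} in $\varepsilon$. Since the boundary condition $u^\varepsilon = 0$ on $\partial U$ is independent of $\varepsilon$, we get $u^\varepsilon_\varepsilon = 0$ on $\partial U$, together with the equation
\begin{equation*}
(1+(\gamma^\varepsilon)')\,u^\varepsilon_\varepsilon + D_p H \cdot D u^\varepsilon_\varepsilon - \varepsilon \Delta u^\varepsilon_\varepsilon = \Delta u^\varepsilon - \gamma^\varepsilon_\varepsilon(u^\varepsilon - \psi) \quad \text{in } U,
\end{equation*}
i.e.\ $L^\varepsilon u^\varepsilon_\varepsilon = \Delta u^\varepsilon - \gamma^\varepsilon_\varepsilon(u^\varepsilon - \psi)$. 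Multiplying by $\sigma^\varepsilon$, integrating over $U$, integrating by parts twice, and using both the adjoint PDE \eqref{obs_adj} and the vanishing of $u^\varepsilon_\varepsilon$ on $\partial U$ yields the identity
\begin{equation*}
u^\varepsilon_\varepsilon(x_0) = \int_U \Delta u^\varepsilon \, \sigma^\varepsilon \, dx - \int_U \gamma^\varepsilon_\varepsilon(u^\varepsilon - \psi) \, \sigma^\varepsilon \, dx.
\end{equation*}

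For the first integral I would apply Cauchy--Schwarz with weight $\sigma^\varepsilon \ge 0$ (Lemma \ref{obs_lem2}(i)):
\begin{equation*}
\left| \int_U \Delta u^\varepsilon \, \sigma^\varepsilon \, dx \right| \le \left( \int_U |D^2 u^\varepsilon|^2 \sigma^\varepsilon \, dx \right)^{1/2} \left( n \int_U \sigma^\varepsilon \, dx \right)^{1/2}.
\end{equation*}
By Lemma \ref{obs_lem2}(ii), $\int_U \sigma^\varepsilon \, dx \le \int_U (1+(\gamma^\varepsilon)') \sigma^\varepsilon \, dx \le 1$, while Lemma \ref{obs_lem3} gives $\int_U |D^2 u^\varepsilon|^2 \sigma^\varepsilon \, dx \le C/\varepsilon$. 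So this term is bounded by $C\varepsilon^{-1/2}$, which sets the rate.

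The main obstacle is the singular term involving $\gamma^\varepsilon_\varepsilon$. The key observation is the algebraic identity
\begin{equation*}
\gamma^\varepsilon_\varepsilon(u^\varepsilon - \psi) = -\,\frac{u^\varepsilon - \psi}{\varepsilon} \, (\gamma^\varepsilon)'(u^\varepsilon - \psi),
\end{equation*}
so that $\gamma^\varepsilon_\varepsilon$ is supported where $u^\varepsilon - \psi > 0$. On that set, Lemma \ref{obs_lem4} yields $(u^\varepsilon - \psi)/\varepsilon \le C$, hence $|\gamma^\varepsilon_\varepsilon(u^\varepsilon - \psi)| \le C (\gamma^\varepsilon)'$. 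Integrating against $\sigma^\varepsilon$ and using Lemma \ref{obs_lem2}(ii) once more,
\begin{equation*}
\left| \int_U \gamma^\varepsilon_\varepsilon(u^\varepsilon - \psi) \, \sigma^\varepsilon \, dx \right| \le C \int_U (\gamma^\varepsilon)' \sigma^\varepsilon \, dx \le C.
\end{equation*}
Since $x_0 \in U$ was arbitrary and $u^\varepsilon_\varepsilon$ vanishes on $\partial U$, combining the two estimates gives $\|u^\varepsilon_\varepsilon\|_{L^\infty(\overline U)} \le C \varepsilon^{-1/2}$, as required. The entire argument hinges on extracting the pointwise bound \eqref{hfj} \emph{first}, so that the a priori divergent factor $1/\varepsilon^2$ in $\gamma^\varepsilon_\varepsilon$ can be absorbed into the controlled quantity $(\gamma^\varepsilon)' \sigma^\varepsilon$.
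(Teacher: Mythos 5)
Your proposal is correct and follows essentially the same route as the paper: differentiate the penalized equation in $\varepsilon$, use the adjoint representation to isolate $u^\varepsilon_\varepsilon(x_0)$, control the singular term $\gamma^\varepsilon_\varepsilon$ via the one-sided bound from Lemma \ref{obs_lem4} combined with Lemma \ref{obs_lem2}(ii), and control the $\Delta u^\varepsilon$ term via weighted Cauchy--Schwarz and Lemma \ref{obs_lem3}. The only cosmetic difference is that the paper fixes $x_0$ at the interior point where $|u^\varepsilon_\varepsilon|$ attains its maximum before invoking the representation formula, whereas you keep $x_0$ arbitrary and note that the estimates are uniform in $x_0$; both yield the same conclusion.
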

\begin{proof}
By standard elliptic estimates, the solution $u^\varepsilon$ is smooth in the parameter $\varepsilon$ for $\varepsilon>0$
(see \cite{E2, T1} for similar arguments).
Differentiating (\ref{obs_reg}) w.r.t. $\varepsilon$ we get
\begin{equation}
(1+(\gamma^\varepsilon)')u_\varepsilon^\varepsilon +D_pH\cdot Du_\varepsilon^\varepsilon 
+ \gamma_\varepsilon^\varepsilon = \varepsilon \Delta u_\varepsilon^\varepsilon + \Delta u^\varepsilon, 
\quad \quad \text{ in }U.
\label{obs5}
\end{equation} 
In addition, we have $u_\varepsilon^\varepsilon(x)=0$ for all $x\in \partial U$, 
since $u^\varepsilon(x)=0$ on $\partial U$ for every $\varepsilon$.
So, we may assume that there exists $x_2 \in U$ such that
$|u_\varepsilon^\varepsilon(x_2)|=\max_{x \in \overline{U}} |u_\varepsilon^\varepsilon (x)|$. 

Consider the adjoint equation \eqref{obs_adj}, and choose $x_0=x_2$.
Multiplying by $\sigma^\varepsilon$ both sides of (\ref{obs5}) and integrating by parts,
$$
u^\varepsilon_\varepsilon(x_2) = -  \int_U \gamma_\varepsilon^\varepsilon \sigma^\varepsilon \,dx+ \int_U \Delta u^\varepsilon \sigma^\varepsilon \,dx.
$$
Hence,
\begin{equation}
|u_\varepsilon^\varepsilon(x_2)| \le \int_U |\gamma_\varepsilon^\varepsilon |\sigma^\varepsilon \,dx+ \int_U |\Delta u^\varepsilon| \sigma^\varepsilon  \,dx.
\label{obsk}
\end{equation}
By Lemma \ref{obs_lem4},
\begin{equation}
|\gamma_\varepsilon^\varepsilon| 
= \left| -\dfrac{u^\varepsilon - \psi}{\varepsilon^2} \gamma' \left( \dfrac{u^\varepsilon-\psi}{\varepsilon}
\right) \right| 
= \left| \dfrac{u^\varepsilon - \psi}{\varepsilon} (\gamma^\varepsilon)'(u^\varepsilon-\psi) \right| \le C
(\gamma^\varepsilon)'.
\label{obs6}
\end{equation}
Hence, thanks to Lemma \ref{obs_lem2}
\begin{equation}
\int_U |\gamma_\varepsilon^\varepsilon| \sigma^\varepsilon \,dx \le C  \int_U (\gamma^\varepsilon)'
\sigma^\varepsilon \,dx\le C,
\label{uj}
\end{equation}
while using \eqref{obs4}
\begin{equation} \label{obs7} \begin{split}
 \int_U |\Delta u^\varepsilon| \sigma^\varepsilon \,dx 
  &\le \left( \int_U | \Delta u^\varepsilon|^2 \sigma^\varepsilon \,dx \right)^{1/2} 
 \left( \int_U\sigma^\varepsilon \,dx \right)^{1/2} \\
  &\le C\left( \int_U  |D^2 u^\varepsilon|^2 \sigma^\varepsilon \,dx \right)^{1/2} 
 \left( \int_U\sigma^\varepsilon \,dx \right)^{1/2}  \le \dfrac{C}{\varepsilon^{1/2}}.
\end{split} \end{equation}
Thus, by (\ref{obsk}), \eqref{uj} and (\ref{obs7})
\begin{equation}
 |u_\varepsilon^\varepsilon(x_2)| \le \dfrac{C}{ \varepsilon^{1/2}}, ~\mbox{for}~ \varepsilon<1.
\label{obs8}
\end{equation}
\end{proof}

\end{subsection}

\end{section}

\begin{section}{Weakly coupled systems of Hamilton--Jacobi equations} \label{weakcou}

We study now the model of monotone weakly coupled systems of Hamilton-Jacobi equations 
considered by Engler and Lenhart \cite{EL}, and by Ishii and Koike \cite{IK1}. 
For the sake of simplicity, we will just focus on the following system of two equations:
\begin{equation}
\left\{ \begin{aligned}
c_{11} u_1 +c_{12} u_2 + H_1(x,Du_1) &=0 \vspace{.05in}\\
c_{21} u_1 +c_{22} u_2 + H_2(x,D u_2) &=0 \\
\end{aligned} \right. 
\quad \mbox{in}~ U,
\label{w_coupled}
\end{equation}
with boundary conditions $u_1 = u_2 =0$ on $\partial U$.
The general case of more equations can be treated in a similar way.

We assume that the Hamiltonians 
$H_1, H_2:  \overline{U} \times \mathbb R^n \rightarrow \mathbb R$ are smooth satisfying
{the following hypotheses.}
\begin{itemize}
\item[(H\ref{weakcou}.1)] {There exists $\beta_1,\beta_2 >0$ such that for every $j=1,2$
$$
\lim_{|p| \to +\infty} \left( \beta_j |H_j(x,p)|^2+D_xH_j(x,p)\cdot p \right)
=\lim_{|p| \to+\infty} \frac{H_j(x,p)}{|p|} =+\infty \text{ uniformly in } x\in \overline{U}.
$$}
\end{itemize}
Following \cite{EL} and \cite{IK1}, we suppose further that
\begin{itemize}
\item[(H\ref{weakcou}.2)] $\displaystyle c_{12},\, c_{21} {\leq} 0$; 
\item[(H\ref{weakcou}.3)] there exists $\alpha>0$ such that
$\displaystyle c_{11}+c_{12}, ~c_{21}+c_{22} \geq \alpha>0$.
\end{itemize}
We observe that, as a consequence, we also have $c_{11}, c_{22}>0$.
Finally, we require that
\begin{itemize}
\item[(H\ref{weakcou}.4)] There exist $\Phi_1,  \Phi_2 \in C^2 (U) \cap C^1 (\overline U)$ 
with $\Phi_j =0$ on $\partial U$ ($j=1,2$), and such that
\begin{equation*} 
\left\{ \begin{aligned}
c_{11} \Phi_1 + c_{12} \Phi_2 +H_1 (x,D\Phi_1 ) 
< 0 \quad\mbox{in}~U, \vspace{.05in}\\
c_{22} \Phi_2 + c_{21} \Phi_1 +H_2 (x,D\Phi_2 ) 
< 0 \quad\mbox{in}~U. \vspace{.05in}
\end{aligned} \right.
\end{equation*}
\end{itemize}
Thanks to these conditions, the Maximum Principle can be applied and existence, 
comparison and uniqueness results hold true, as stated in \cite{EL}.

We consider now the following regularized system (here $\varepsilon >0$):
\begin{equation}
\left\{ \begin{aligned}
c_{11} u^\varepsilon_1 +c_{12} u^\varepsilon_2 + H_1(x,D u^\varepsilon_1) &=\varepsilon \Delta u^\varepsilon_1
\vspace{.05in}\\
c_{21}u^\varepsilon_1+c_{22}u^\varepsilon_2 + H_2(x,Du^\varepsilon_2) &=\varepsilon \Delta u^\varepsilon_2 \\
\end{aligned} \right. 
\quad \mbox{in}~ U,
\label{wc_reg}
\end{equation}
with boundary conditions $u^\varepsilon_1 = u^\varepsilon_2=0$ on $\partial U$.\\
Conditions (H\ref{weakcou}.1), (H\ref{weakcou}.2), and (H\ref{weakcou}.3) yield {existence} and uniqueness 
of the pair of solutions $(u^\varepsilon_1, u^\varepsilon_2)$ in \eqref{wc_reg}. 
Next lemma gives a uniform bound for the $C^1$ norm 
of the sequences $\{ u^{\varepsilon}_i \}$, $i=1,2$.
Its proof, which is very similar to that one of Proposition \ref{obs1},
is still presented for readers' convenience.
\begin{Lemma} \label{lemu}
There exists a positive constant $C$, independent of $\varepsilon$, such that
\begin{equation}
\|u^\varepsilon_i \|_{L^\infty}, \| Du^\varepsilon_i \|_{L^\infty} \le C,
\quad \text{for } i=1,2.
\notag
\end{equation}
\end{Lemma}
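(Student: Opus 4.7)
The plan is to mimic the three-step template of Proposition \ref{obs1}: (a) a uniform $L^\infty$ bound for $(u^\varepsilon_1, u^\varepsilon_2)$, (b) a gradient bound on $\partial U$ via barriers, and (c) an interior gradient bound via the Bernstein method. The one new feature compared with the scalar case is the coupling between $u^\varepsilon_1$ and $u^\varepsilon_2$, which is handled throughout by exploiting the monotonicity conditions (H\ref{weakcou}.2)--(H\ref{weakcou}.3).

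For (a), let $M := \max_{\overline U}\max\{u^\varepsilon_1, u^\varepsilon_2\}$. If $M > 0$, it must be attained at an interior point, say by $u^\varepsilon_1$ at some $x_0$; there $Du^\varepsilon_1(x_0) = 0$, $\Delta u^\varepsilon_1(x_0) \le 0$, and $u^\varepsilon_2(x_0) \le M$. Plugging into the first equation of \eqref{wc_reg} and using $c_{12} \le 0$ (so that $c_{12}\, u^\varepsilon_2(x_0) \ge c_{12} M$), I obtain $(c_{11}+c_{12}) M \le -H_1(x_0,0) \le C$, and (H\ref{weakcou}.3) forces $M \le C/\alpha$. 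The lower bound $u^\varepsilon_j \ge \Phi_j$ follows by comparison with the sub-solution $(\Phi_1,\Phi_2)$ from (H\ref{weakcou}.4), which remains a sub-solution of \eqref{wc_reg} for $\varepsilon$ small. For the boundary gradient estimate (b), $(\Phi_1,\Phi_2)$ is a lower barrier on $\overline U$, while $v(x) := \mu\, d(x)$ with $d := \mathrm{dist}(\cdot,\partial U)$ and $\mu$ large is an upper barrier on a boundary strip $U_\delta$: by (H\ref{weakcou}.1), $H_j(x,\mu Dd)$ dominates the remaining (bounded) terms, so that $v$ is a super-solution of each equation of \eqref{wc_reg} on $U_\delta$. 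Since $\Phi_j = u^\varepsilon_j = v = 0$ on $\partial U$, comparison yields $\|Du^\varepsilon_j\|_{L^\infty(\partial U)} \le C$.

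For (c), set $w_j := |Du^\varepsilon_j|^2/2$ and, by symmetry, assume $M_1 := \|Du^\varepsilon_1\|_{L^\infty} \ge M_2 := \|Du^\varepsilon_2\|_{L^\infty}$. If $M_1$ is attained on $\partial U$, we are done by (b); otherwise, let $x^* \in U$ be an interior maximizer of $w_1$. The Bernstein identity for the first equation of \eqref{wc_reg}, the direct analogue of \eqref{obs2}, reads
\[
2 c_{11} w_1 + c_{12}\, Du^\varepsilon_1 \cdot Du^\varepsilon_2 + D_x H_1 \cdot Du^\varepsilon_1 + D_p H_1 \cdot Dw_1 = \varepsilon \Delta w_1 - \varepsilon |D^2 u^\varepsilon_1|^2.
\]
Evaluating at $x^*$, where $Dw_1(x^*) = 0$ and $\Delta w_1(x^*) \le 0$, and using $|Du^\varepsilon_2(x^*)| \le M_2 \le M_1$, $c_{12} \le 0$, and the reinforced inequality $c_{11} \ge \alpha + |c_{12}|$ implied by (H\ref{weakcou}.2)--(H\ref{weakcou}.3), I obtain the analogue of \eqref{1k}:
\[
\varepsilon |D^2 u^\varepsilon_1|^2 (x^*) \le -\alpha M_1^2 - D_x H_1(x^*, Du^\varepsilon_1(x^*)) \cdot Du^\varepsilon_1(x^*).
\]
Combining with the analogue of \eqref{2k}, $\varepsilon |D^2 u^\varepsilon_1|^2 \ge \beta_1 |H_1|^2 - C$ (valid for $\varepsilon$ small by the PDE and the $L^\infty$ bound), yields $\beta_1 |H_1|^2 + D_x H_1 \cdot Du^\varepsilon_1 \le C$ at $(x^*, Du^\varepsilon_1(x^*))$, and (H\ref{weakcou}.1) then forces $M_1 \le C$; hence $M_2 \le M_1 \le C$ as well. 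The main obstacle is exactly the cross term $c_{12}\, Du^\varepsilon_1 \cdot Du^\varepsilon_2$: the symmetry-breaking assumption $M_2 \le M_1$, combined with the monotonicity-induced strict lower bound $c_{11} \ge \alpha + |c_{12}|$, is precisely what allows this term to be absorbed by the good $-2 c_{11} w_1$ contribution, and is what prevents a term-by-term repetition of Proposition \ref{obs1} without exploiting the system structure.
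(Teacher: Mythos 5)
Your proposal is correct and follows essentially the same three-step strategy as the paper (interior maximum principle for the $L^\infty$ bound, barrier comparison with $\Phi_j$ below and $\mu d$ above for the boundary gradient estimate, Bernstein argument at an interior maximum of $\max_j w^\varepsilon_j$ combined with the bound $\varepsilon|D^2u^\varepsilon_1|^2\ge 2\beta_1\varepsilon^2|\Delta u^\varepsilon_1|^2$ and (H\ref{weakcou}.1) for the interior gradient bound). The only cosmetic difference is that for the lower $L^\infty$ bound the paper argues directly via the maximum principle at an interior minimizer, whereas you appeal to the sub-solution comparison with $(\Phi_1,\Phi_2)$, which the paper also invokes in its boundary step, so the ingredients are identical; also, the "reinforced inequality" $c_{11}\ge\alpha+|c_{12}|$ you write down is just a restatement of (H\ref{weakcou}.2)--(H\ref{weakcou}.3) and is used the same way the paper absorbs the cross term $c_{12}Du^\varepsilon_1\cdot Du^\varepsilon_2$ into $-2(c_{11}+c_{12})w^\varepsilon_1\le 0$.
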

\begin{proof}

\textbf{Step I: Bound on $\|u^\varepsilon_j \|_{L^\infty}, j=1,2$.}

{First of all observe that $u^{\varepsilon}_1 = u^{\varepsilon}_2 = 0$ on $\partial U$ for every $\varepsilon$.
Thus, it will be sufficient to show that $u^{\varepsilon}_1$ and $u^{\varepsilon}_2$
are bounded in the interior of $U$.
Without loss of generality, we can assume that there exists $\overline{x} \in U$ such that
\begin{equation*}
\max_{\substack{ j=1,2 \\  x \in \overline{U} }} u^{\varepsilon}_j (x) = u^{\varepsilon}_1 (\overline{x}).
\end{equation*}
We have 
\begin{equation*}
\alpha u^{\varepsilon}_1 (\overline{x}) \leq c_{11} u^{\varepsilon}_1 (\overline{x}) + c_{12} u^{\varepsilon}_2 (\overline{x})
\leq - H_1 (\overline{x}, 0) \leq \max_{x \in \overline{U}} \left( - H_1 (x,0) \right),
\end{equation*}
where we used (H\ref{weakcou}.3) and equation \eqref{wc_reg}.
Analogously, if $\widehat{x} \in U$ is such that 
\begin{equation*}
\min_{\substack{ j=1,2 \\  x \in \overline{U} }} u^{\varepsilon}_j (x) = u^{\varepsilon}_1 (\widehat{x}), 
\end{equation*}
then
\begin{equation*}
u^{\varepsilon}_1 (\widehat{x}) \geq \frac{c_{11}}{c_{11} + c_{12} } u^{\varepsilon}_1 (\widehat{x}) 
+ \frac{c_{12}}{c_{11} + c_{12} } u^{\varepsilon}_2 (\widehat{x})
\geq - \frac{H_1 (\widehat{x}, 0)}{c_{11} + c_{12} } \geq \frac{1}{c_{11} + c_{12} } \min_{x \in \overline{U}} \left( - H_1 (x,0) \right).
\end{equation*}}
Concerning the bounds on the gradients, we will argue as in the proof of Proposition \ref{obs1}.

\textbf{Step II: Bound on $\| D u^\varepsilon_j \|_{L^\infty (\partial U)}, j=1,2$.}

{We now show that 
\begin{equation*}
\max_{\substack{ j=1,2 \\  x \in \partial U }} | D u^{\varepsilon}_j (x) | \leq C,
\end{equation*}
for some constant $C$ independent of $\varepsilon$.
As it was done in Section \ref{sectobs}, we are going to construct appropriate barriers.
For $\varepsilon$ small enough, assumption (H\ref{weakcou}.4) implies that
\begin{equation*} 
\left\{ \begin{aligned}
c_{11} \Phi_1 + c_{12} \Phi_2 +H_1 (x,D\Phi_1 ) 
< \varepsilon \Delta \Phi_1 \quad\mbox{in}~U, \vspace{.05in}\\
c_{22} \Phi_2 + c_{21} \Phi_1 +H_2 (x,D\Phi_2 ) 
< \varepsilon \Delta \Phi_2 \quad\mbox{in}~U, \vspace{.05in}
\end{aligned} \right.
\end{equation*}
and $\Phi_1 = \Phi_2 =0$ on $\partial U$. 
Therefore, $(\Phi_1,\Phi_2)$ is a sub-solution of \eqref{wc_reg}. 
By the comparison principle, $u^\varepsilon_j \ge \Phi_j$ in~$U$, $j=1,2$.
Let $d(x)$, $\delta$, and $U_{\delta}$ be as in the proof of Proposition \ref{obs1}. 
For $ \mu > 0 $ large enough, the uniform bounds on $\|u^\varepsilon_1\|_{L^\infty}$ and $\|u^\varepsilon_2\|_{L^\infty}$ yield
$v := \mu  d \ge u^\varepsilon_j$ on $\partial U_\delta $, $j=1,2$, so that
\begin{equation*} 
\left\{ \begin{aligned}
(c_{11} + c_{12} ) v +H_1(x,Dv) - \varepsilon \Delta v 
\ge H_1(x,\mu Dd) - \mu C \quad\mbox{in}~U, \vspace{.05in}\\
(c_{21} + c_{22} ) v +H_2(x,Dv) - \varepsilon \Delta v 
\ge H_2(x,\mu Dd) - \mu C \quad\mbox{in}~U. \vspace{.05in}\\
\end{aligned} \right.
\end{equation*}
Now, we have $\Phi_j= u^\varepsilon_j= v = 0$ on $\partial U$.
Also, thanks to assumption (H\ref{weakcou}.1), for $\mu >0$ large enough
\begin{equation*} 
\left\{ \begin{aligned}
(c_{11} + c_{12} ) v +H_1(x,Dv) - \varepsilon \Delta v 
 \ge 0 \quad\mbox{in}~U, \vspace{.05in}\\
(c_{21} + c_{22} ) v +H_2(x,Dv) - \varepsilon \Delta v 
\ge 0 \quad\mbox{in}~U, \vspace{.05in}\\
\end{aligned} \right.
\end{equation*}
that is, the pair $(v,v)$ is a super-solution for the system \eqref{wc_reg}.
Thus, the comparison principle gives us that $\Phi_j \le u^\varepsilon_j \le v$ in $ U_\delta$, $j=1,2$.
Then, from the fact that $\Phi_j= u^\varepsilon_j= v = 0$ on $\partial U$ we get 
$$
\dfrac{\partial v}{\partial \nu}(x) \le \dfrac{\partial u^\varepsilon_j}{\partial \nu}(x) \le \dfrac{\partial \Phi_j}{\partial \nu}(x),
\quad \text{for } x\in \partial U, \quad j=1,2.
$$
Hence, we obtain $\| Du^{\varepsilon}_j \|_{L^{\infty}(\partial U)} \leq C$, $j=1,2$.}

\textbf{Step III: Conclusion.}

{Setting $w^\varepsilon_j=\dfrac{|Du^\varepsilon_j|^2}{2}$, $j=1,2$, by a direct computation we have that
\begin{equation} \label{weqn}
\left\{ \begin{aligned}
2 c_{11} w^\varepsilon_1 + D_p H_1\cdot Dw^\varepsilon_1 + c_{12} Du^\varepsilon_1 \cdot Du^\varepsilon_2
+D_x H_1\cdot Du^\varepsilon_1 = \varepsilon \Delta w^\varepsilon_1 - \varepsilon |D^2 u^\varepsilon_1|^2, \vspace{.05in} \\
2 c_{22} w^\varepsilon_2 + D_p H_2\cdot D w^\varepsilon_2 + c_{21} Du^\varepsilon_1 \cdot D u^\varepsilon_2
+D_x H_2\cdot Du^\varepsilon_2 = \varepsilon \Delta w^\varepsilon_2 - \varepsilon |D^2 u^\varepsilon_2 |^2.
\end{aligned} \right.
\end{equation}
Assume now that there exists $\widehat{x} \in U$ such that 
\begin{equation*}
\max_{\substack{ j=1,2 \\  x \in \overline{U} }} w^{\varepsilon}_j (x) = w^{\varepsilon}_1 (\widehat{x}). 
\end{equation*}
Then, we have 
\begin{equation*} \begin{split}
\varepsilon |D^2 u^\varepsilon_1|^2 (\widehat{x})
&= \varepsilon \Delta w^\varepsilon_1  (\widehat{x})  - 2 c_{11} w^\varepsilon_1 (\widehat{x}) 
- c_{12} Du^\varepsilon_1 (\widehat{x}) \cdot Du^\varepsilon_2 (\widehat{x})
- D_x H_1\cdot Du^\varepsilon_1 (\widehat{x}) \\
&\leq - 2 (  c_{11} + c_{12} ) w^{\varepsilon}_1 (\widehat{x}) - D_x H_1\cdot Du^\varepsilon_1 (\widehat{x})
\leq - D_x H_1\cdot Du^\varepsilon_1 (\widehat{x}).
\end{split} \end{equation*}
Now, arguing as in the proof of Proposition \ref{obs1}, for $\varepsilon$ sufficiently small
\begin{equation*} \begin{split}
\varepsilon |D^2 u^\varepsilon_1 (\widehat{x}) |^2 
&\geq 2 \beta_1 \varepsilon^2 | \Delta u^\varepsilon_1 (\widehat{x})|^2
=2 \beta_1 \left[  c_{11} u^\varepsilon_1 (\widehat{x}) +c_{12} u^\varepsilon_2 (\widehat{x}) + H_1( \widehat{x} ,D u^\varepsilon_1(\widehat{x}))  \right]^2 \\
&\geq \beta_1 | H_1( \widehat{x} ,D u^\varepsilon_1(\widehat{x})) |^2 - C.
\end{split} \end{equation*}
Collecting the last two relations we have 
\begin{equation*}
 \beta_1 | H_1( \widehat{x} ,D u^\varepsilon_1(\widehat{x})) |^2
 + D_x H_1( \widehat{x} ,D u^\varepsilon_1(\widehat{x})) \cdot Du^\varepsilon_1 (\widehat{x}) \leq C.
\end{equation*}
Recalling condition (H\ref{weakcou}.1) the conclusion follows.}
\end{proof}
\noindent
{\bf Adjoint method.} At this point, we introduce the 
adjoint of the linearization of system \eqref{wc_reg}.
Let us emphasize that the adjoint equations
we introduce
form a system of weakly coupled type, which is very natural
in this setting, and create a systematic way to 
the study of weakly coupled systems.
The linearized operator corresponding to \eqref{wc_reg} is
\begin{equation*} 
L^{\varepsilon} (z_1,z_2) : =
\left\{ \begin{aligned}
D_p H_1 (x,Du_1^\varepsilon) \cdot D z_1 
+ c_{11} z_1 + c_{12} z_2- \varepsilon \Delta z_1 , \vspace{.05in}\\
D_p H_2 (x,Du_2^\varepsilon) \cdot D z_2 
+ c_{22} z_2  + c_{21} z_1- \varepsilon \Delta z_2.
\end{aligned} \right.
\end{equation*}
Let us now identify the adjoint operator $(L^{\varepsilon})^*$.
For every $\nu^1,\nu^2 \in C^{\infty}_c (U)$ we have
\begin{equation*} \begin{split}
&\langle (L^{\varepsilon})^* (\nu^1, \nu^2), (z_1, z_2) \rangle 
:= \langle (\nu^1, \nu^2), L^{\varepsilon} (z_1, z_2) \rangle \\
&= \langle \, \nu^1, \left[ L^{\varepsilon} (z_1, z_2) \right]_1 \, \rangle
+ \langle \, \nu^2, \left[ L^{\varepsilon} (z_1, z_2) \right]_2 \, \rangle \\
&= \int_U \left[ D_p H_1 (x,Du_1^\varepsilon) \cdot D z_1 
+ c_{11} z_1 + c_{12} z_2- \varepsilon \Delta z_1 \right] \, \nu^1 \, dx \\
&+ \int_U \left[ D_p H_2 (x,Du_2^\varepsilon) \cdot D z_2 
+ c_{22} z_2  + c_{21} z_1- \varepsilon \Delta z_2 \right] \, \nu^2 \, dx \\
&= \int_U \left[ - \mbox{div}(D_pH_1 \nu^1 ) + c_{11} \nu^1 + c_{21}  \nu^2
- \varepsilon \Delta \nu^1 \right] \, z_1 \, dx \\
&+ \int_U \left[ - \mbox{div}(D_pH_2 \nu^2 )  + c_{22} \nu^2 + c_{12} \nu^1
- \varepsilon \Delta \nu^2 \right] \, z_2 \, dx.
\end{split} \end{equation*}
Then, the adjoint equations are:
\begin{equation} \label{rfd0}
\left\{ \begin{aligned}
- \mbox{div}(D_pH_1 \sigma^{1,\varepsilon}) + c_{11} \sigma^{1,\varepsilon} + c_{21} \sigma^{2,\varepsilon}
&= \varepsilon \Delta
\sigma^{1,\varepsilon}+ (2 - i) \delta_{x_0} \quad &\mbox{in}~ U,\vspace{.05in}\\
- \mbox{div}(D_pH_2 \sigma^{2,\varepsilon}) + c_{22} \sigma^{2,\varepsilon} + c_{12} \sigma^{1,\varepsilon}
&= \varepsilon \Delta
\sigma^{2,\varepsilon}+ (i - 1) \delta_{x_0} \quad &\mbox{in}~ U,
\end{aligned} \right.
\end{equation}
with boundary conditions
\begin{equation*}
\left\{ \begin{aligned}
\sigma^{1,\varepsilon}&=0 \qquad \quad&\mbox{on}~\partial U, \vspace{.05in}\\
\sigma^{2,\varepsilon}&=0 \qquad \quad&\mbox{on}~\partial U ,\\
\end{aligned} \right.
\end{equation*}
where $i \in \{  1, 2 \} $ and $x_0 \in U$ will be chosen later.
{Observe that, once $x_0$ is given, the choice $i=1$ ($i=2$) corresponds to an adjoint system
of two equations
where  a Dirac delta measure concentrated at $x_0$ appears only on the right-hand side 
of the first (second) equation.}
Existence and uniqueness of $\sigma^{1,\varepsilon}$ and $\sigma^{2,\varepsilon}$ 
follow by Fredholm alternative, by arguing as in Section \ref{sectobs}, 
and we have $\sigma^{1,\varepsilon}, \sigma^{2,\varepsilon} \in C^{\infty} (U \setminus \{ x_0 \})$.
We study now further properties of $\sigma^{1,\varepsilon}$ and $\sigma^{2,\varepsilon}$.
\begin{Lemma}[Properties of $\sigma^{1,\varepsilon}, \sigma^{2,\varepsilon}$] \label{kiz}
Let $\nu$ be the outer unit normal to $\partial U$. Then
\begin{itemize}
\item[(i)] $\sigma^{j,\varepsilon} \ge 0$ on $\overline{U}$.  
In particular, $\dfrac{\partial \sigma^{j,\varepsilon}}{\partial \nu} \le 0$ on $\partial U \,\, (j=1,2)$.
\item[(ii)] The following equality holds: 
\begin{equation}
\sum_{j=1}^2 \left( \int_U  (c_{j1} + c_{j2}) \sigma^{j,\varepsilon} \,dx 
- \varepsilon \int_{\partial U} \dfrac{\partial \sigma^{j,\varepsilon}}{\partial \nu} \,dS \right) = 1 .
\notag
\end{equation}
In particular,
$$
\sum_{j=1}^2 \int_U (c_{j1} + c_{j2}) \sigma^{j,\varepsilon} \,dx  \le 1.
$$
\end{itemize}
\end{Lemma}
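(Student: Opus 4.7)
The plan is to mirror the argument used for Lemma \ref{obs_lem2}, but adapted to the weakly coupled setting, relying throughout on the sign structure (H\ref{weakcou}.2)--(H\ref{weakcou}.3). For part (i) the idea is to test the adjoint system \eqref{rfd0} against solutions of an auxiliary non-adjoint problem for which a comparison principle is available; for part (ii) one simply integrates the two adjoint equations over $U$ and invokes the zero Dirichlet conditions.

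For part (i), first I would introduce, for any smooth non-negative data $f_1,f_2$, the auxiliary linearized system
\begin{equation*}
\left\{\begin{aligned}
D_pH_1(x,Du_1^\varepsilon)\cdot Dv_1 + c_{11}v_1+c_{12}v_2 - \varepsilon\Delta v_1 &= f_1 \quad\text{in }U,\\
D_pH_2(x,Du_2^\varepsilon)\cdot Dv_2 + c_{22}v_2+c_{21}v_1 - \varepsilon\Delta v_2 &= f_2 \quad\text{in }U,
\end{aligned}\right.
\end{equation*}
with $v_1=v_2=0$ on $\partial U$, which is solvable by the Fredholm alternative exactly as for \eqref{obs3}. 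The key step is a maximum principle: if $f_1,f_2\ge 0$ then $v_1,v_2\ge 0$. To prove this, suppose $\min_{j,x}v_j$ is negative and is achieved, say, by $v_1$ at an interior point $\bar x$. At $\bar x$ one has $Dv_1(\bar x)=0$ and $\Delta v_1(\bar x)\ge 0$, so the first equation gives $c_{11}v_1(\bar x)+c_{12}v_2(\bar x)\ge f_1(\bar x)\ge 0$. Since $v_1(\bar x)\le v_2(\bar x)$ and $c_{12}\le 0$ by (H\ref{weakcou}.2), we deduce $c_{12}v_2(\bar x)\le c_{12}v_1(\bar x)$, whence $(c_{11}+c_{12})v_1(\bar x)\ge 0$, which contradicts (H\ref{weakcou}.3) together with $v_1(\bar x)<0$. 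An analogous argument rules out the case where the minimum is attained by $v_2$.

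Next I would use duality. Multiplying the two equations in \eqref{rfd0} by $v_1$ and $v_2$ respectively, summing, and integrating by parts, the boundary terms vanish because $\sigma^{j,\varepsilon}=v_j=0$ on $\partial U$. Using the auxiliary system one obtains
\begin{equation*}
\int_U f_1\sigma^{1,\varepsilon}\,dx + \int_U f_2\sigma^{2,\varepsilon}\,dx = (2-i)\,v_1(x_0)+(i-1)\,v_2(x_0)\ge 0,
\end{equation*}
where the sign follows from the maximum principle just established. Choosing $i=1$ and letting $f_2\equiv 0$ with $f_1\ge 0$ arbitrary forces $\sigma^{1,\varepsilon}\ge 0$; taking instead $f_1\equiv 0$ with $f_2\ge 0$ arbitrary forces $\sigma^{2,\varepsilon}\ge 0$. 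The bound $\partial\sigma^{j,\varepsilon}/\partial\nu\le 0$ on $\partial U$ then follows immediately since $\sigma^{j,\varepsilon}\ge 0$ in $U$, $\sigma^{j,\varepsilon}=0$ on $\partial U$, and $\nu$ is the outer normal.

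For part (ii), I would integrate each of the two adjoint equations in \eqref{rfd0} over $U$ and add them. The divergence terms contribute $\int_{\partial U}(D_pH_j\cdot\nu)\sigma^{j,\varepsilon}\,dS=0$ because $\sigma^{j,\varepsilon}=0$ on $\partial U$; the viscosity terms become $\varepsilon\int_{\partial U}\partial\sigma^{j,\varepsilon}/\partial\nu\,dS$; and the Dirac masses contribute $(2-i)+(i-1)=1$. Rearranging gives exactly
\begin{equation*}
\sum_{j=1}^2\left(\int_U(c_{j1}+c_{j2})\sigma^{j,\varepsilon}\,dx - \varepsilon\int_{\partial U}\frac{\partial\sigma^{j,\varepsilon}}{\partial\nu}\,dS\right)=1,
\end{equation*}
and the inequality in (ii) is immediate from the non-positivity of the normal derivatives established in (i). The only subtle step is the coupled maximum principle for the auxiliary system; everything else is bookkeeping via integration by parts.
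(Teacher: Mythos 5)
Your proof mirrors the paper's argument exactly: you introduce the auxiliary coupled system, establish the maximum principle via (H3.2) and (H3.3) at an interior minimum, use the duality pairing to conclude non-negativity of $\sigma^{1,\varepsilon},\sigma^{2,\varepsilon}$, and obtain (ii) by integrating the two adjoint equations over $U$ and summing. The only cosmetic difference is that you fix $i$ and set one of $f_1, f_2$ to zero at a time, whereas the paper handles arbitrary non-negative $f_1,f_2$ simultaneously; both yield the same conclusion, and you even correctly write $(i-1)v_2(x_0)$ where the paper has a harmless sign typo.
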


\begin{proof}
First of all, we consider the adjoint of equation \eqref{rfd0}:
\begin{equation} \label{adjadj50} 
\left\{ \begin{aligned}
D_p H_1 (x,Du_1^\varepsilon) \cdot D z_1 
+ c_{11} z_1 + c_{12} z_2- \varepsilon \Delta z_1 = f_1, \vspace{.05in}\\
D_p H_2 (x,Du_2^\varepsilon) \cdot D z_2 
+ c_{22} z_2  + c_{21} z_1- \varepsilon \Delta z_2 = f_2,
\end{aligned} \right.
\end{equation}
where $f_1,f_2 \in C (U)$, with boundary conditions $z_1=z_2 =0$ on $\partial U$.
Note that
\begin{equation} \label{posit0}
f_1,f_2 \geq 0 \Longrightarrow 
\min_{\substack{ j=1,2 \\  x \in \overline{U} }} z_j (x) \geq 0.
\end{equation}
Indeed, if the minimum is achieved for some $\overline{x} \in \partial U$, then clearly $z_1, z_2 \geq 0$.
Otherwise, assume 
\begin{equation*}
\min_{\substack{ j=1,2 \\  x \in \overline{U} }} z_j (x) = z_1 (\overline{x}), 
\end{equation*}
for some $\overline{x} \in U$.
Using condition (H\ref{weakcou}.2)
\begin{equation*}
(c_{11} + c_{12} ) z_1 (\overline{x}) \geq c_{11} z_1 (\overline{x}) + c_{12} z_2 (\overline{x})
= \varepsilon \Delta z_1 (\overline{x}) + f_1 (\overline{x}) \geq 0.
\end{equation*}
Thanks to (H\ref{weakcou}.3), \eqref{posit0} follows.

Let us now multiply \eqref{rfd0}$_1$ and \eqref{rfd0}$_2$ by the solutions $z_1$ and $z_2$ of \eqref{adjadj50}.
Adding up the relations obtained we have
\begin{equation*}
\int_U f_1 \sigma^{1,\varepsilon} \,  dx + \int_U f_2 \sigma^{2,\varepsilon} \,  dx
= (2 - i) z_1 (x_0) + (1 - i) z_2 (x_0).
\end{equation*}
Thanks to \eqref{posit0}, from last relation  we conclude that\begin{equation*} 
\int_U f_1 \sigma^{1,\varepsilon} \,  dx + \int_U f_2 \sigma^{2,\varepsilon} \,  dx \geq 0,
\quad \text{ for every }f_1,f_2 \geq 0,
\end{equation*}
and this implies that $\sigma^{1,\varepsilon}, \sigma^{2,\varepsilon} \geq 0$.
To prove (ii), it is sufficient to integrate equations \eqref{rfd0}$_1$ and \eqref{rfd0}$_2$ over $U$, 
and to add up the two relations obtained.
\end{proof}
{The proof of the next lemma can be obtained by arguing as in the proof of Lemma \ref{obs_lem3}.}
\begin{Lemma}
\label{wc_lem1}
There exists a constant $C>0$, independent of $\varepsilon$, such that
\begin{equation*}
\varepsilon  \int_U |D^2 u^\varepsilon_1 |^2 \sigma^{1,\varepsilon}\, dx
+ \varepsilon \int_U |D^2 u^\varepsilon_2 |^2 \sigma^{2,\varepsilon} \,dx \le C. 
\end{equation*}
\end{Lemma}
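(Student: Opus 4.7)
The plan is to imitate the proof of Lemma \ref{obs_lem3} by exploiting the pair of equations \eqref{weqn} satisfied by $w^\varepsilon_j := |Du^\varepsilon_j|^2/2$ ($j=1,2$), and integrating these against the adjoint densities $\sigma^{1,\varepsilon}$ and $\sigma^{2,\varepsilon}$ before summing. Specifically, I would first multiply \eqref{weqn}$_1$ by $\sigma^{1,\varepsilon}$, \eqref{weqn}$_2$ by $\sigma^{2,\varepsilon}$, integrate each over $U$, and add. This isolates the target quantity $\varepsilon\int_U(|D^2 u^\varepsilon_1|^2 \sigma^{1,\varepsilon} + |D^2 u^\varepsilon_2|^2 \sigma^{2,\varepsilon})\,dx$ on one side of the resulting identity.

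Next, I would integrate by parts the first-order terms $\int_U D_p H_j \cdot D w^\varepsilon_j\, \sigma^{j,\varepsilon}\,dx$ (the boundary contributions vanish because $\sigma^{j,\varepsilon}=0$ on $\partial U$) and the second-order terms $\varepsilon\int_U \Delta w^\varepsilon_j\, \sigma^{j,\varepsilon}\,dx$, which leaves the boundary remainder $-\varepsilon\int_{\partial U} w^\varepsilon_j\,\partial_\nu \sigma^{j,\varepsilon}\,dS$. Combining the bulk contributions and invoking the adjoint system \eqref{rfd0} for $(\sigma^{1,\varepsilon},\sigma^{2,\varepsilon})$, the operator hitting $w^\varepsilon_j$ collapses to the zeroth-order piece $c_{jj}\sigma^{j,\varepsilon}+c_{kj}\sigma^{k,\varepsilon}$ (where $k\neq j$) together with a Dirac mass evaluated at $x_0$.

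Once this identity is obtained, each of the resulting terms is a priori controlled: the diagonal contributions $-c_{jj} w^\varepsilon_j \sigma^{j,\varepsilon}$ and the off-diagonal ones $c_{kj} w^\varepsilon_j \sigma^{k,\varepsilon}$ carry favorable signs thanks to (H\ref{weakcou}.2) and the nonnegativity given by Lemma \ref{kiz}(i); the cross-coupling contributions $c_{12} Du^\varepsilon_1 \cdot Du^\varepsilon_2\, \sigma^{1,\varepsilon}$, $c_{21} Du^\varepsilon_1 \cdot Du^\varepsilon_2\, \sigma^{2,\varepsilon}$, together with $D_x H_j \cdot D u^\varepsilon_j\,\sigma^{j,\varepsilon}$, are handled by Cauchy--Schwarz and the uniform $C^1$ bound of Lemma \ref{lemu}, reducing the estimate to an integral of $\sigma^{j,\varepsilon}$ which by (H\ref{weakcou}.3) satisfies $\int_U \sigma^{j,\varepsilon}\,dx \le \frac{1}{\alpha}\int_U(c_{j1}+c_{j2})\sigma^{j,\varepsilon}\,dx \le \frac{1}{\alpha}$ via Lemma \ref{kiz}(ii); the Dirac terms $w^\varepsilon_j(x_0)$ are bounded by Lemma \ref{lemu}; and the boundary terms obey $\varepsilon \int_{\partial U} |\partial_\nu \sigma^{j,\varepsilon}|\,dS \le 1$ by Lemma \ref{kiz}(ii) (since $\partial_\nu\sigma^{j,\varepsilon}\le 0$ on $\partial U$).

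The main obstacle I anticipate is the presence of the genuinely coupling terms $c_{12}Du^\varepsilon_1\cdot Du^\varepsilon_2$ and $c_{21}Du^\varepsilon_1\cdot Du^\varepsilon_2$ in \eqref{weqn}, which cannot be absorbed by a single adjoint equation as in the scalar obstacle problem; they force one to work with the system \eqref{rfd0} as a whole and to rely crucially on the coercivity hypothesis (H\ref{weakcou}.3) via the uniform $L^1$ mass bound $\int_U \sigma^{j,\varepsilon}\,dx \le 1/\alpha$. Once this bound is in hand, collecting all estimates yields $\varepsilon\int_U(|D^2 u^\varepsilon_1|^2 \sigma^{1,\varepsilon} + |D^2 u^\varepsilon_2|^2 \sigma^{2,\varepsilon})\,dx \le C$ with $C$ independent of $\varepsilon$ and of $x_0$.
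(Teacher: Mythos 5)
Your proposal is correct and is precisely the argument the paper intends: the paper disposes of Lemma~\ref{wc_lem1} with a one-line remark that one should argue as in Lemma~\ref{obs_lem3}, and your proposal carries out exactly that adaptation to the system setting (multiplying \eqref{weqn}$_j$ by $\sigma^{j,\varepsilon}$, summing, integrating by parts against \eqref{rfd0}, and using Lemmas~\ref{lemu} and \ref{kiz}). You correctly identify the genuinely new point in passing from scalar to system — the cross-coupling terms $c_{12}\,Du^\varepsilon_1\cdot Du^\varepsilon_2$ and $c_{21}\,Du^\varepsilon_1\cdot Du^\varepsilon_2$ — and correctly handle them via the sign structure (H\ref{weakcou}.2)--(H\ref{weakcou}.3), the nonnegativity in Lemma~\ref{kiz}(i), and the $L^1$ bound $\int_U\sigma^{j,\varepsilon}\,dx\le 1/\alpha$ derived from Lemma~\ref{kiz}(ii).
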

We now give the last lemma needed to estimate the speed of convergence.
Here we use the notation $u^{\varepsilon}_{j, \varepsilon} (x) := \partial u^{\varepsilon}_j (x) / \partial \varepsilon$, $j=1,2$.
\begin{Lemma}
\label{wc_lem2}
There exists a constant $C>0$, independent of $\varepsilon$, such that
\begin{equation}
\max_{\substack{ j=1,2 \\  x \in \overline{U} }} | u^{\varepsilon}_{j, \varepsilon} (x) | \leq \dfrac{C}{\varepsilon^{1/2}}.
\notag
\end{equation}
\end{Lemma}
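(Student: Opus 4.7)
The plan is to mimic closely the three-step argument used for Lemma \ref{obs_thm1}, adapted to the weakly coupled setting, noting that the present situation is actually \emph{simpler} than the obstacle case since there is no singular penalization term $\gamma^\varepsilon_\varepsilon$ to control. First, by standard elliptic regularity applied to the regularized system \eqref{wc_reg}, the pair $(u^\varepsilon_1,u^\varepsilon_2)$ is smooth in the parameter $\varepsilon$ on $(0,\infty)$. Differentiating \eqref{wc_reg} with respect to $\varepsilon$ yields the linear weakly coupled system
\begin{equation*}
\left\{ \begin{aligned}
c_{11} u^\varepsilon_{1,\varepsilon} + c_{12} u^\varepsilon_{2,\varepsilon} + D_p H_1(x,Du^\varepsilon_1) \cdot Du^\varepsilon_{1,\varepsilon} &= \varepsilon \Delta u^\varepsilon_{1,\varepsilon} + \Delta u^\varepsilon_1 \quad \text{in } U, \vspace{.05in}\\
c_{21} u^\varepsilon_{1,\varepsilon} + c_{22} u^\varepsilon_{2,\varepsilon} + D_p H_2(x,Du^\varepsilon_2) \cdot Du^\varepsilon_{2,\varepsilon} &= \varepsilon \Delta u^\varepsilon_{2,\varepsilon} + \Delta u^\varepsilon_2 \quad \text{in } U,
\end{aligned} \right.
\end{equation*}
with boundary conditions $u^\varepsilon_{1,\varepsilon}=u^\varepsilon_{2,\varepsilon}=0$ on $\partial U$, since $u^\varepsilon_1=u^\varepsilon_2=0$ there for every $\varepsilon>0$.

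Because the boundary values vanish, the maximum of $|u^\varepsilon_{j,\varepsilon}|$ over $j\in\{1,2\}$ and $x \in \overline U$ is attained at some interior point $x_2 \in U$, for some index $j_0 \in \{1,2\}$. Consider then the adjoint system \eqref{rfd0} with $x_0 := x_2$ and the choice $i=j_0$, so that the Dirac mass appears on the right-hand side of the $j_0$-th equation. Multiplying the first differentiated equation by $\sigma^{1,\varepsilon}$ and the second by $\sigma^{2,\varepsilon}$, integrating over $U$ and integrating by parts (using the boundary conditions $\sigma^{j,\varepsilon}=u^\varepsilon_{j,\varepsilon}=0$ on $\partial U$), the left-hand side collapses by construction to $u^\varepsilon_{j_0,\varepsilon}(x_2)$, yielding
\begin{equation*}
u^\varepsilon_{j_0,\varepsilon}(x_2) = \int_U \Delta u^\varepsilon_1 \, \sigma^{1,\varepsilon} \, dx + \int_U \Delta u^\varepsilon_2 \, \sigma^{2,\varepsilon} \, dx.
\end{equation*}

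It then remains to estimate the right-hand side. By Cauchy--Schwarz,
\begin{equation*}
\int_U |\Delta u^\varepsilon_j| \, \sigma^{j,\varepsilon} \, dx \le \left( \int_U |D^2 u^\varepsilon_j|^2 \, \sigma^{j,\varepsilon} \, dx \right)^{1/2} \left( \int_U \sigma^{j,\varepsilon} \, dx \right)^{1/2},
\end{equation*}
for $j=1,2$. Lemma \ref{wc_lem1} bounds the first factor by $C\varepsilon^{-1/2}$. For the second factor, Lemma \ref{kiz}(ii) combined with assumption (H\ref{weakcou}.3), which gives $c_{j1}+c_{j2}\ge \alpha>0$, yields $\int_U \sigma^{j,\varepsilon}\,dx \le 1/\alpha$. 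Combining these bounds and taking the maximum over $j_0\in\{1,2\}$ and over the location of the extremum gives the desired $C\varepsilon^{-1/2}$ estimate.

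The only mildly delicate point is the bookkeeping of which adjoint system to invoke (the choice of $i$) depending on where and for which component the maximum of $|u^\varepsilon_{j,\varepsilon}|$ is attained; the analytic content is otherwise a direct repetition of the scalar argument, and crucially does not require any analog of the estimate \eqref{obs6} since no $\gamma^\varepsilon_\varepsilon$ term is present.
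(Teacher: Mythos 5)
Your proposal is correct and follows essentially the same route as the paper's proof of Lemma \ref{wc_lem2}: differentiate \eqref{wc_reg} in $\varepsilon$, choose the adjoint system \eqref{rfd0} with $x_0$ and $i$ dictated by where and for which component $|u^\varepsilon_{j,\varepsilon}|$ peaks, pair and integrate by parts to obtain $u^\varepsilon_{j_0,\varepsilon}(x_2)=\sum_j\int_U \Delta u^\varepsilon_j\,\sigma^{j,\varepsilon}\,dx$, then bound by Cauchy--Schwarz together with Lemma \ref{wc_lem1} and the mass bound on $\sigma^{j,\varepsilon}$ from Lemma \ref{kiz}(ii). The only cosmetic difference is that you spell out $\int_U\sigma^{j,\varepsilon}\,dx\le 1/\alpha$ via (H\ref{weakcou}.3), whereas the paper simply invokes the chain of inequalities in \eqref{obs7}.
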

\begin{proof}
Differentiating (\ref{wc_reg}) w.r.t $\varepsilon$ we obtain the system
\begin{equation} \label{juh}
\left\{ \begin{aligned}
c_{11}u^\varepsilon_{1,\varepsilon} +c_{12}u^\varepsilon_{2,\varepsilon} + D_p H_1\cdot Du^\varepsilon_{1,\varepsilon}
&=\varepsilon \Delta u^\varepsilon_{1,\varepsilon} + \Delta u^\varepsilon_1, \vspace{.05in}\\
c_{21}u^\varepsilon_{1,\varepsilon} + c_{22} u^\varepsilon_{2,\varepsilon} + D_p H_2\cdot Du^\varepsilon_{2,\varepsilon}
&=\varepsilon \Delta u^\varepsilon_{2,\varepsilon} +\Delta u^\varepsilon_2. \\
\end{aligned} \right. 
\end{equation}
Since $u^{\varepsilon}_{1,\varepsilon} =u^{\varepsilon}_{2,\varepsilon} = 0$ on $\partial U$,
we have 
\begin{equation*}
\max_{x \in \partial U} u^{\varepsilon}_{1,\varepsilon} (x)
= \max_{x \in \partial U} u^{\varepsilon}_{2,\varepsilon} (x) = 0.
\end{equation*}
Assume now that there exists $\widehat{x} \in U$ such that 
\begin{equation*}
\max_{\substack{ j=1,2 \\  x \in \overline{U} }} | u^{\varepsilon}_{j, \varepsilon} (x) | = | u^{\varepsilon}_{1, \varepsilon} (\widehat{x}) |,
\end{equation*}
and let $\sigma^{1,\varepsilon}, \sigma^{2,\varepsilon}$ be the solutions of system \eqref{rfd0}
with $i=1$ and $x_0 = \widehat{x}$.

Multiplying equations \eqref{juh}$_1$ and \eqref{juh}$_2$ by  
$\sigma^{1,\varepsilon}$ and $\sigma^{2,\varepsilon}$ respectively and adding up, 
thanks to \eqref{rfd0} we obtain
\begin{equation*}
u^{\varepsilon}_{1, \varepsilon} (\widehat{x})
= \int_U \Delta u^\varepsilon_1 \, \sigma^{1,\varepsilon} \, dx 
+ \int_U \Delta u^\varepsilon_2 \, \sigma^{2,\varepsilon} \, dx.
\end{equation*}
Thanks to Lemma \ref{wc_lem1}, and repeating the chain of inequalities in \eqref{obs7} one can show that
\begin{equation*}
\left| \int_U \Delta u^\varepsilon_j \, \sigma^{j,\varepsilon} \, dx \right|
\leq \frac{C}{\varepsilon^{1/2}},  \qquad j=1,2, 
\end{equation*}
and from this the conclusion follows.
\end{proof}
We can now prove the following result on the speed of convergence.
\begin{Theorem}
\label{wc_thm1}
There exists $C>0$, independent of $\varepsilon$, such that
$$
\|u^\varepsilon_1 - u_1 \|_{L^\infty}, \|u^\varepsilon_2 - u_2 \|_{L^\infty} \le C \varepsilon^{1/2}.
$$
\end{Theorem}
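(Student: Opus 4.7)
The plan is to mirror the final step already used for the scalar obstacle problem (Theorem \ref{obs_speed}), exploiting Lemma \ref{wc_lem2} as the crucial differentiated estimate. Standard elliptic regularity applied to \eqref{wc_reg} shows that $u^\varepsilon_1, u^\varepsilon_2$ depend smoothly on $\varepsilon > 0$, so Lemma \ref{wc_lem2} gives $\|u^\varepsilon_{j,\varepsilon}\|_{L^\infty(\overline U)} \leq C\varepsilon^{-1/2}$ for $j=1,2$, with $C$ independent of $\varepsilon$. The natural thing to do is then to integrate this pointwise bound in the parameter $\varepsilon$.

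More precisely, for any $0 < \varepsilon' < \varepsilon$ and any $x \in \overline U$ I would write
\begin{equation*}
u^\varepsilon_j(x) - u^{\varepsilon'}_j(x) = \int_{\varepsilon'}^{\varepsilon} u^s_{j,s}(x)\, ds, \qquad j=1,2,
\end{equation*}
and apply Lemma \ref{wc_lem2} to obtain
\begin{equation*}
\bigl| u^\varepsilon_j(x) - u^{\varepsilon'}_j(x) \bigr| \leq \int_{\varepsilon'}^{\varepsilon} \frac{C}{s^{1/2}}\, ds = 2C\bigl(\varepsilon^{1/2} - (\varepsilon')^{1/2}\bigr).
\end{equation*}
This shows that $\{u^\varepsilon_j\}_{\varepsilon>0}$ is Cauchy in $C(\overline U)$ as $\varepsilon\to 0^+$, and in particular converges uniformly to some continuous limit $\widetilde u_j$ on $\overline U$ with $\widetilde u_j = 0$ on $\partial U$.

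The next step is to identify the limit with $u_j$. Thanks to the uniform $C^1$-bounds from Lemma \ref{lemu}, classical stability of viscosity solutions under vanishing viscosity shows that $(\widetilde u_1, \widetilde u_2)$ is a viscosity solution of \eqref{w_coupled}. Since conditions (H\ref{weakcou}.1)--(H\ref{weakcou}.4) guarantee the comparison principle and uniqueness of the viscosity solution of \eqref{w_coupled} (by the Engler--Lenhart \cite{EL} and Ishii--Koike \cite{IK1} results quoted right after (H\ref{weakcou}.4)), we must have $\widetilde u_j = u_j$ for $j=1,2$. Letting $\varepsilon'\to 0^+$ in the Cauchy estimate above then yields $\|u^\varepsilon_j - u_j\|_{L^\infty} \leq 2C\varepsilon^{1/2}$, as claimed.

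The main obstacle in this whole chain is not this concluding argument but rather Lemma \ref{wc_lem2} itself, whose proof exploits the weakly coupled adjoint system \eqref{rfd0}, its positivity and mass properties (Lemma \ref{kiz}), and the Hessian-in-measure control from Lemma \ref{wc_lem1}. Once those are established, the step from the $\varepsilon^{-1/2}$ bound on $u^\varepsilon_{j,\varepsilon}$ to the $\varepsilon^{1/2}$ rate for $u^\varepsilon_j - u_j$ is a purely one-dimensional integration, exactly parallel to the scalar case treated in Theorem \ref{obs_speed}.
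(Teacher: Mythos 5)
Your argument is correct and is essentially the same as the paper's, which simply states that Theorem \ref{wc_thm1} "is a direct consequence of Lemma \ref{wc_lem2}." You have merely made explicit the standard step the paper leaves implicit: integrating the bound $\|u^\varepsilon_{j,\varepsilon}\|_{L^\infty}\le C\varepsilon^{-1/2}$ over the parameter, obtaining the Cauchy estimate $|u^\varepsilon_j - u^{\varepsilon'}_j|\le 2C(\varepsilon^{1/2}-(\varepsilon')^{1/2})$, identifying the limit with the unique viscosity solution via the comparison principle of Engler--Lenhart and Ishii--Koike, and sending $\varepsilon'\to 0^+$.
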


\begin{proof}
The theorem is a direct consequence of Lemma \ref{wc_lem2}.
\end{proof}

\end{section}

\begin{section}{Cell problem for Weakly coupled {systems} of Hamilton--Jacobi equations} \label{secweak}
In this {section we study the following class of weakly coupled systems} of Hamilton--Jacobi equations:
\begin{equation}
\left\{ \begin{aligned}
c_1u_1 - c_1 u_2 + H_1(x,Du_1) &=\overline{H}_1 \vspace{.05in}\\
- c_2 u_1 +c_2 u_2 + H_2(x,Du_2) &=\overline{H}_2 \\
\end{aligned} \right. 
\quad \mbox{in}~ \mathbb T^n,
\qquad \qquad \overline{H}_1, \overline{H}_2 \in \mathbb{R},
\label{cell}
\end{equation}
which is the analog of  the cell problem for single equation introduced by Lions, Papanicolaou, and Varadhan \cite{LPV}.
 We will assume that $H_1, H_2\in C^\infty(\mathbb T^n \times \mathbb R^n )$, and
\begin{itemize}

\item[(H\ref{secweak}.1)] {there exist $\omega_1, \omega_2>0$ such that for every $j=1,2$
$$
\lim_{|p| \to +\infty} \left[ \omega_j |H_j(x,p)|^2+D_x H_j (x,p)\cdot p - 16 n \omega_j c_j^2 |p|^2 \right] =+\infty \
\text{ uniformly in } x\in \mathbb T^n;
$$}

\item[(H\ref{secweak}.2)] $c_1, c_2>0$.
\end{itemize}
It is easy to see that the coefficients of $u_1 , u_2$ in this system do not satisfy 
the coupling assumptions of the previous section. 
Indeed, as it happens for the cell problem in the context of weak KAM theory,
there is no hope of a uniqueness result for (\ref{cell}). 

{
\begin{Remark}
The presence of the term $16 n \omega_j c_j^2$ in condition (H\ref{secweak}.1)
will be justified by later computations.
Nevertheless, we observe that (H\ref{secweak}.1) is weaker than (H\ref{weakcou}.1).
Indeed, if (H\ref{weakcou}.1) holds, then for every $\omega_j >\beta_j$
\begin{align*}
&\lim_{|p| \to +\infty} \left[ \omega_j |H_j(x,p)|^2+D_x H_j (x,p)\cdot p - 16 n \omega_j c_j^2 |p|^2 \right] \\
&= \lim_{|p| \to +\infty} \left[ \beta_j |H_j(x,p)|^2+D_x H_j (x,p) \cdot p  + (\omega_j - \beta_j) |H_j(x,p)|^2 -16  n \omega_j c_j^2 |p|^2 \right] = +\infty,
\end{align*}
uniformly in $x$, and hence (H\ref{secweak}.1) is satisfied. 
\end{Remark}}

To find the constants 
$\overline{H}_1, \overline{H}_2$
we use the same arguments as in \cite{T1}.  See also \cite{GomesSto, CGT1}.
First, for every $\varepsilon > 0$, let us consider the following regularized system:
\begin{equation}
\left\{ \begin{aligned}
(c_1+\varepsilon)u^\varepsilon_1 - c_1 u^\varepsilon_2 + H_1(x,Du^\varepsilon_1)
& = \varepsilon^2 \Delta u^\varepsilon_1
\vspace{.05in}\\
(c_2+\varepsilon) u^\varepsilon_2 - c_2 u^\varepsilon_1 + H_2(x,D u^\varepsilon_2 )
&=\varepsilon^2 \Delta u^\varepsilon_2 \\
\end{aligned} \right. 
\quad \mbox{in}~ \mathbb T^n.
\label{cell_reg}
\end{equation}
For every $\varepsilon>0$ fixed,
the coefficients of this new system satisfy the coupling assumptions
(H\ref{weakcou}.2) and (H\ref{weakcou}.3) of the previous section. 
Thus, (\ref{cell_reg}) admits a unique pair of smooth solutions $(u^\varepsilon_1 , u^\varepsilon_2)$.
In particular, this implies that $u^\varepsilon_1$ and $u^\varepsilon_2$ are $\mathbb T^n$-periodic. 

The following result gives some a priori estimates.
\begin{Theorem}
\label{cell_thm1}
There exists $C>0$, independent of $\varepsilon$, such that
$$
\| \varepsilon u^\varepsilon_1 \|_{L^\infty}, \| \varepsilon u^\varepsilon_2 \|_{L^\infty}, 
\|D u^\varepsilon_1 \|_{L^\infty}, \| D u^\varepsilon_2 \|_{L^\infty} \le C.
$$
\end{Theorem}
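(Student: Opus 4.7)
The plan is to adapt the two-stage Bernstein argument used in Sections~\ref{sectobs} and \ref{weakcou}, adjusted for the quasi-monotone coupling of \eqref{cell_reg}. For the first two bounds I would apply the maximum principle to the pair. Let $(j_0, \overline{x})$ attain $\max_{j,x} u^\varepsilon_j(x)$; without loss of generality $j_0=1$. At $\overline{x}$ we have $Du^\varepsilon_1(\overline{x})=0$, $\Delta u^\varepsilon_1(\overline{x})\le 0$, and $u^\varepsilon_2(\overline{x}) \le u^\varepsilon_1(\overline{x})$, so evaluating \eqref{cell_reg}$_1$ gives
$$
\varepsilon u^\varepsilon_1(\overline{x}) \;\le\; (c_1+\varepsilon)u^\varepsilon_1(\overline{x}) - c_1 u^\varepsilon_2(\overline{x}) \;\le\; -H_1(\overline{x},0) \;\le\; C;
$$
a symmetric argument at the global minimum of $\min_{j,x} u^\varepsilon_j$ produces the matching lower bound, proving $\|\varepsilon u^\varepsilon_j\|_{L^\infty}\le C$ for $j=1,2$.

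For the gradient bound, I would set $w^\varepsilon_j := |Du^\varepsilon_j|^2/2$. Differentiating \eqref{cell_reg}$_j$ in $x_k$ and contracting with $(u^\varepsilon_j)_{x_k}$ yields the Bernstein identity
\begin{equation*}
2(c_j+\varepsilon) w^\varepsilon_j - c_j\, Du^\varepsilon_1 \cdot Du^\varepsilon_2 + D_x H_j \cdot Du^\varepsilon_j + D_p H_j \cdot Dw^\varepsilon_j \;=\; \varepsilon^2 \Delta w^\varepsilon_j - \varepsilon^2 |D^2 u^\varepsilon_j|^2.
\end{equation*}
I pick $x_0$ attaining $\max_j\max_x w^\varepsilon_j$, WLOG via $w^\varepsilon_1$. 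At $x_0$ we have $Dw^\varepsilon_1=0$, $\Delta w^\varepsilon_1\le 0$, and, crucially, $|Du^\varepsilon_2(x_0)|\le |Du^\varepsilon_1(x_0)|$ by global maximality; this absorbs the cross term $c_1 Du^\varepsilon_1 \cdot Du^\varepsilon_2 \le 2 c_1 w^\varepsilon_1$ and gives $\varepsilon^2|D^2u^\varepsilon_1|^2(x_0) \le -D_xH_1\cdot Du^\varepsilon_1(x_0)$. Combining this with the Cauchy--Schwarz bound $(\Delta u^\varepsilon_1)^2\le n|D^2u^\varepsilon_1|^2$ and the PDE identity $\varepsilon^2\Delta u^\varepsilon_1 = c_1(u^\varepsilon_1-u^\varepsilon_2)+\varepsilon u^\varepsilon_1 + H_1$, and using $\varepsilon\le 1$ together with $(A+B)^2 \ge B^2/2 - A^2$ to separate $H_1$ from the coupling term, I obtain at $x_0$
$$
|H_1|^2 + 2n\, D_xH_1 \cdot Du^\varepsilon_1 \;\le\; 4 c_1^2 (u^\varepsilon_1-u^\varepsilon_2)^2(x_0) + C.
$$
Since $|Du^\varepsilon_j|\le |Du^\varepsilon_1(x_0)|$ globally, the periodic function $u^\varepsilon_1-u^\varepsilon_2$ on $\mathbb T^n$ has oscillation controlled by a multiple of $|Du^\varepsilon_1(x_0)|$, and an estimate on its mean obtained by integrating \eqref{cell_reg} against $1$ (together with the bound on $\varepsilon u^\varepsilon_j$) yields $(u^\varepsilon_1-u^\varepsilon_2)^2(x_0) \le C n|Du^\varepsilon_1(x_0)|^2 + C$. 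Substituting produces a bound of the schematic form $\omega_1|H_1|^2 + D_xH_1\cdot Du^\varepsilon_1(x_0) - 16n\omega_1 c_1^2 |Du^\varepsilon_1(x_0)|^2 \le C$, and assumption (H\ref{secweak}.1) then forces $|Du^\varepsilon_1(x_0)|$ to be bounded; global maximality extends the bound to both components.

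The main obstacle is the control of the coupling term $c_1(u^\varepsilon_1-u^\varepsilon_2)$ at the maximum point of $w^\varepsilon_1$. On the torus only $\varepsilon u^\varepsilon_j$ (not $u^\varepsilon_j$ itself) is a priori bounded, so $u^\varepsilon_1-u^\varepsilon_2$ has to be controlled via an oscillation-plus-mean decomposition on $\mathbb T^n$; the specific constant $16n\omega_j c_j^2$ in (H\ref{secweak}.1) is precisely calibrated to absorb this coupling error against the coercive growth of $|H_j|^2$ dictated by the Bernstein inequality.
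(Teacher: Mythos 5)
Your argument for the bound on $\|\varepsilon u^\varepsilon_j\|_{L^\infty}$ is correct and is exactly the paper's. The Bernstein scheme for the gradient is also the right strategy, and you correctly isolated the genuine difficulty: controlling the coupling term $c_1(u^\varepsilon_1 - u^\varepsilon_2)$ at the maximum point of $|Du^\varepsilon_1|^2$, given that on $\mathbb{T}^n$ only $\varepsilon u^\varepsilon_j$ is a priori bounded, not $u^\varepsilon_j$ itself.

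The mean/oscillation decomposition you propose for that step has a gap. Integrating \eqref{cell_reg}$_1$ over $\mathbb{T}^n$ gives
\begin{equation*}
c_1 \int_{\mathbb{T}^n} (u^\varepsilon_1 - u^\varepsilon_2)\, dx
= -\varepsilon \int_{\mathbb{T}^n} u^\varepsilon_1\, dx - \int_{\mathbb{T}^n} H_1(x,Du^\varepsilon_1)\, dx;
\end{equation*}
the first term on the right is bounded by Step~1, but $\int H_1(x,Du^\varepsilon_1)\,dx$ is not a priori controlled. Hypothesis (H\ref{secweak}.1) only controls the combination $\omega_1|H_1|^2 + D_xH_1\cdot p - 16n\omega_1 c_1^2|p|^2$ as $|p| \to \infty$; it does not by itself give a one-sided or linear-in-$|p|$ bound on $H_1$. (Recall the paper stresses that (H\ref{secweak}.1) is strictly weaker than the coercivity in (H\ref{weakcou}.1), so you cannot quietly invoke superlinearity to bound $\int H_1$ from below.) Consequently your claimed inequality $(u^\varepsilon_1-u^\varepsilon_2)^2(x_0) \le C n|Du^\varepsilon_1(x_0)|^2 + C$ does not follow as stated.

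The paper circumvents this entirely, and in fact you already had the needed ingredient in your Step~1. At the global max point $\overline{x}$ of $u^\varepsilon_j$ one has $Du^\varepsilon_1(\overline{x})=0$, $\Delta u^\varepsilon_1(\overline{x})\le 0$ and $u^\varepsilon_1(\overline{x})\ge u^\varepsilon_2(\overline{x})$, so \eqref{cell_reg}$_1$ yields not only $\varepsilon u^\varepsilon_1(\overline{x})\le C$ but also the pointwise bound
\begin{equation*}
0 \le u^\varepsilon_1(\overline{x}) - u^\varepsilon_2(\overline{x})
\le -\frac{1}{c_1}\bigl( H_1(\overline{x},0) + \varepsilon u^\varepsilon_1(\overline{x}) \bigr) \le C,
\end{equation*}
which uses only the boundedness of $H_1(\cdot,0)$ on the compact torus and requires neither integration nor superlinearity. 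Transporting this bound from $\overline{x}$ to the gradient's max point $x_0$ by the Lipschitz estimate $|u^\varepsilon_j(x_0)-u^\varepsilon_j(\overline{x})| \le \sqrt{n}\,|Du^\varepsilon_1(x_0)|$ gives $|(c_1+\varepsilon)u^\varepsilon_1 - c_1 u^\varepsilon_2|(x_0) \le C + 2c_1\sqrt{n}\,|Du^\varepsilon_1(x_0)|$, and Young's inequality then produces precisely the combination $\omega_1|H_1|^2 + D_xH_1\cdot Du^\varepsilon_1 - 16n\omega_1 c_1^2|Du^\varepsilon_1|^2 \le C$ to which (H\ref{secweak}.1) applies. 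Replace your mean estimate by this pointwise transport and the rest of your Bernstein computation goes through.
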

\begin{proof}
Our proof is based on the Maximum Principle.
Without loss of generality, we may assume that 
\begin{equation*}
\max_{\substack{ j=1,2 \\  x \in \mathbb T^n }} \left\{ \varepsilon u^{\varepsilon}_j (x) \right\} = \varepsilon u^{\varepsilon}_1 (x^\varepsilon_0){,}
\end{equation*}
for some $x^\varepsilon_0 \in \mathbb T^n$.
Applying the Maximum Principle to the first equation of (\ref{cell_reg}),
\begin{equation} \label{ol}
\varepsilon u^\varepsilon_1 (x^\varepsilon_0) \le (c_1+\varepsilon) u^\varepsilon_1 (x^\varepsilon_0)
-c_1u^\varepsilon_2 (x^\varepsilon_0) \le -H^1(x^\varepsilon_0,0) \le C,
\end{equation}
and this shows the existence of a bound from above for $\varepsilon u^{\varepsilon}_1$ and $\varepsilon u^{\varepsilon}_2$. 
Using a similar argument one can show that there is also a bound from below, so that
\begin{equation} \label{po}
\|\varepsilon u^\varepsilon_1 \|_{L^\infty}, \|\varepsilon u^\varepsilon_2 \|_{L^\infty} \le C.
\end{equation}
The previous inequality allows us to prove a bound for the difference 
$u^\varepsilon_1 (x^\varepsilon_0) - u^\varepsilon_2 (x^\varepsilon_0)$.
Indeed, thanks to {\eqref{ol} and} \eqref{po} we have
\begin{equation} \label{cell_inequ}
| u^\varepsilon_1 (x^\varepsilon_0) - u^\varepsilon_2 (x^\varepsilon_0) | 
= u^\varepsilon_1 (x^\varepsilon_0) - u^\varepsilon_2 (x^\varepsilon_0) 
\le - \frac{1}{c_1} H_1(x^\varepsilon_0,0) - \frac{\varepsilon}{c_1} u^\varepsilon_1 (x^\varepsilon_0) \le C.
\end{equation}
In order to find a bound for the gradients,
let us set $w^\varepsilon_j = \dfrac{|Du^\varepsilon_j|^2}{2}$, $j=1,2$.
Then, by a direct computation one can see that
\begin{equation*}
\left\{ \begin{aligned}
2(c_1+\varepsilon)w^\varepsilon_1 +D_p H_1 \cdot Dw^\varepsilon_1 - c_1 Du^\varepsilon_1 \cdot Du^\varepsilon_2 + D_x H_1 \cdot Du^\varepsilon_1 
&= \varepsilon^2 \Delta w^\varepsilon_1 -\varepsilon^2 |D^2 u^\varepsilon_1|^2 \vspace{.05in} \\
2(c_2+\varepsilon)w^\varepsilon_2 +D_p H_2\cdot Dw^\varepsilon_2 - c_2 Du^\varepsilon_1 \cdot Du^\varepsilon_2 + D_xH_2\cdot Du^\varepsilon_2 
&= \varepsilon^2 \Delta w^\varepsilon_2 -\varepsilon^2 |D^2 u^\varepsilon_2|^2
\end{aligned} \right. 
\quad \mbox{in}~ \mathbb T^n.
\end{equation*}
Without loss of generality, we may assume that there exists $x^\varepsilon_1 \in \mathbb T^n$ such that
\begin{equation*}
\max_{\substack{ j=1,2 \\  x \in \mathbb T^n }} \left\{ w^{\varepsilon}_j (x) \right\} = w^{\varepsilon}_1 (x^\varepsilon_1).
\end{equation*}
Then, by the Maximum Principle
\begin{equation}  \label{kjh} \begin{split}
\varepsilon^2 |D^2u^\varepsilon_1 (x^\varepsilon_1)|^2 
& \le -2(c_1+\varepsilon) w^\varepsilon_1 (x^\varepsilon_1) 
+ c_1Du^\varepsilon_1 (x^\varepsilon_1)\cdot Du^\varepsilon_2 (x^\varepsilon_1) - D_xH_1\cdot Du^\varepsilon_1 (x^\varepsilon_1) \\
& \le - D_xH_1\cdot Du^\varepsilon_1 (x^\varepsilon_1) .
\end{split} \end{equation}
Moreover, for $\varepsilon$ sufficiently small
\begin{equation} \label{gft}
\varepsilon^2 |D^2u^\varepsilon_1 (x^\varepsilon_1)|^2  \ge2 \omega_1 \varepsilon^4 (\Delta u^\varepsilon_1 (x^\varepsilon_1))^2  =
2 \omega_1 \left[ H_1(x^\varepsilon_1,Du^\varepsilon_1 (x^\varepsilon_1)) + (c_1+\varepsilon)u^\varepsilon_1 (x^\varepsilon_1)-c_1u^\varepsilon_2 (x^\varepsilon_1) \right]^2.
\end{equation}
Also, thanks to \eqref{po} and \eqref{cell_inequ}
\begin{align}
&|(c_1+\varepsilon)u^\varepsilon_1 (x^\varepsilon_1)- c_1 u^\varepsilon_2 (x^\varepsilon_1)| \notag\\
&\hspace{.2cm} \leq \varepsilon |u^\varepsilon_1 (x^\varepsilon_1)|+c_1 |u^\varepsilon_1 (x^\varepsilon_1)- u^\varepsilon_1 (x^\varepsilon_0)|
+c_1 |u^\varepsilon_2 (x^\varepsilon_1) - u^\varepsilon_2 (x^\varepsilon_0)| 
+c_1 |u^\varepsilon_1 (x^\varepsilon_0)-  u^\varepsilon_2(x^\varepsilon_0)| \notag\\
&\hspace{.2cm}\leq C+c_1|u^\varepsilon_1 (x^\varepsilon_1)- u^\varepsilon_1 (x^\varepsilon_0)|
+c_1 | u^\varepsilon_2 (x^\varepsilon_1) - u^\varepsilon_2 (x^\varepsilon_0)|\notag\\
&\hspace{.2cm}\leq C+ 2c_1 |Du^\varepsilon_1 (x^\varepsilon_1)| |x^\varepsilon_1-x^\varepsilon_0| 
\leq C+2c_1\sqrt{n}  |Du^\varepsilon_1 (x^\varepsilon_1)|, \notag
\end{align}
where we used the fact that the diameter of $\mathbb T^n$ is $\sqrt{n}$.
{Last relation, thanks to \eqref{gft} and Young's inequality, gives that
\begin{align}
\varepsilon^2 |D^2u^\varepsilon_1 (x^\varepsilon_1)|^2  
&\ge 2 \omega_1 \left[ H_1(x^\varepsilon_1,Du^\varepsilon_1 (x^\varepsilon_1)) + (c_1+\varepsilon)u^\varepsilon_1 (x^\varepsilon_1)-c_1u^\varepsilon_2 (x^\varepsilon_1) \right]^2 \notag\\
&\geq \omega_1 |H_1(x^\varepsilon_1,Du^\varepsilon_1 (x^\varepsilon_1)) |^2 
- 2 \omega_1 \left[  (c_1+\varepsilon)u^\varepsilon_1 (x^\varepsilon_1)-c_1u^\varepsilon_2 (x^\varepsilon_1) \right]^2  \notag\\
&\geq \omega_1 |H_1(x^\varepsilon_1,Du^\varepsilon_1 (x^\varepsilon_1)) |^2 
- 2 \omega_1 \left[  C+2c_1\sqrt{n}  |Du^\varepsilon_1 (x^\varepsilon_1)| \right]^2  \notag\\
&\geq \omega_1 |H_1(x^\varepsilon_1,Du^\varepsilon_1 (x^\varepsilon_1)) |^2
-C-16 n \omega_1 c_1^2 |Du^\varepsilon_1 (x^\varepsilon_1)|^2, \notag
\end{align}
Using last inequality and \eqref{kjh} we have
$$
\omega_1 |H_1(x_1,Du^\varepsilon_1 (x^\varepsilon_1)) |^2 +D_xH_1\cdot Du^\varepsilon_1 (x^\varepsilon_1)
- 16 n \omega_1 c_1^2 |Du^\varepsilon_1 (x^\varepsilon_1)|^2  \le C.
$$
Thanks to condition (H\ref{secweak}.1), we obtain the conclusion.}
\end{proof}

In the sequel, all the functions will be regarded as functions defined in the whole $\mathbb{R}^n$
and $\mathbb{Z}^n$-periodic.
Next lemma provides some a priori bounds on $u^\varepsilon_1$ and $u^\varepsilon_2$.
\begin{Lemma}
\label{cell_lem3}
There exists a constant $C>0$, independent of $\varepsilon$, such that
$$
|u^\varepsilon_1 (x) - u^\varepsilon_1 (y)| , | u^\varepsilon_2 (x) - u^\varepsilon_2 (y) |
, |u^\varepsilon_1 (x) - u^\varepsilon_2 (y)| \le C,\quad x,y \in \mathbb R^n.
$$
\end{Lemma}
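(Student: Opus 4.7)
The plan is to deduce all three inequalities essentially as consequences of Theorem \ref{cell_thm1}: the uniform Lipschitz bound $\|Du^\varepsilon_1\|_{L^\infty},\|Du^\varepsilon_2\|_{L^\infty}\le C$ together with the one-point comparison \eqref{cell_inequ} already produced in its proof.

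\textbf{First I would handle the two ``diagonal'' estimates.} Since $u^\varepsilon_1$ and $u^\varepsilon_2$ are $\mathbb Z^n$-periodic and Theorem \ref{cell_thm1} gives uniform gradient bounds, for any $x,y\in\mathbb R^n$ I pick $k\in\mathbb Z^n$ so that $\tilde y:=y+k$ satisfies $|x-\tilde y|\le\sqrt n$ (the diameter of $\mathbb T^n$); then, for $j=1,2$,
$$
|u^\varepsilon_j(x)-u^\varepsilon_j(y)|=|u^\varepsilon_j(x)-u^\varepsilon_j(\tilde y)|\le \|Du^\varepsilon_j\|_{L^\infty}|x-\tilde y|\le C\sqrt n.
$$

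\textbf{Next I would tackle the ``cross'' estimate.} In the proof of Theorem \ref{cell_thm1}, the point $x^\varepsilon_0\in\mathbb T^n$ at which $\max_{j=1,2}\{\varepsilon u^\varepsilon_j\}$ is attained satisfies $u^\varepsilon_1(x^\varepsilon_0)\ge u^\varepsilon_2(x^\varepsilon_0)$, and \eqref{cell_inequ} yields $|u^\varepsilon_1(x^\varepsilon_0)-u^\varepsilon_2(x^\varepsilon_0)|\le C$. By the triangle inequality, for every $x,y\in\mathbb R^n$,
$$
|u^\varepsilon_1(x)-u^\varepsilon_2(y)|\le |u^\varepsilon_1(x)-u^\varepsilon_1(x^\varepsilon_0)|+|u^\varepsilon_1(x^\varepsilon_0)-u^\varepsilon_2(x^\varepsilon_0)|+|u^\varepsilon_2(x^\varepsilon_0)-u^\varepsilon_2(y)|,
$$
and each of the three terms on the right is bounded uniformly in $\varepsilon$: the outer two by the diagonal step just established, and the middle one by \eqref{cell_inequ}.

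The only conceptual subtlety, rather than a real obstacle, is that \eqref{cell_inequ} was produced at a single $\varepsilon$-dependent point $x^\varepsilon_0$; however the uniform Lipschitz bound of Theorem \ref{cell_thm1} is exactly what allows one to propagate that one-point comparison to arbitrary pairs $(x,y)\in\mathbb R^n\times\mathbb R^n$. No tools beyond those already developed for Theorem \ref{cell_thm1} are required.
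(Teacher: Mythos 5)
Your proof is correct and follows essentially the same route as the paper: the diagonal estimates from periodicity plus the uniform gradient bound of Theorem \ref{cell_thm1}, and the cross estimate by passing through the $\varepsilon$-dependent maximizer $x^\varepsilon_0$ where \eqref{cell_inequ} controls $u^\varepsilon_1 - u^\varepsilon_2$. The only (cosmetic) difference is that you apply the full triangle inequality to get the two-sided bound in one step, whereas the paper first derives the one-sided inequality $u^\varepsilon_1(x)-u^\varepsilon_2(y)\le C$ by using that $u^\varepsilon_1(x)\le u^\varepsilon_1(x^\varepsilon_0)$ and then says to repeat the argument at the minimizer; your version is slightly more streamlined but not substantively different.
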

\begin{proof}
The first two inequalities follow from the periodicity of 
$u^\varepsilon_1$ and $u^\varepsilon_2$, and from the fact that 
$Du^\varepsilon_1$ and $Du^\varepsilon_2$ are bounded.

Let us now show the last inequality.
{As in the previous proof, without loss of generality} we may assume that
there exists ${x_0^{\varepsilon}} \in \mathbb T^n$ such that
\begin{equation*} 
\max_{\substack{ j=1,2 \\  x \in \mathbb T^n }} \left\{ u^{\varepsilon}_j (x) \right\} 
= u^{\varepsilon}_1 (x^\varepsilon_0).
\end{equation*}
Combining the second inequality of the lemma with \eqref{ol},
$$
u^\varepsilon_1 (x) - u^\varepsilon_2 (y) 
\leq u^\varepsilon_1 (x^\varepsilon_0) - u^\varepsilon_2 (x^\varepsilon_0) 
+ u^\varepsilon_2 (x^\varepsilon_0) - u^\varepsilon_2 (y) \le C, \qquad x,y \in \mathbb R^n.
$$
The proof can be concluded by repeating the same argument for 
$\min_{\substack{ j=1,2 \\  x \in \mathbb T^n }} \left\{ u^{\varepsilon}_j (x) \right\}$.
\end{proof}

\begin{proof}[Proof of Theorem \ref{cell_const}]

{Thanks to Theorem \ref{cell_thm1} and Lemma \ref{cell_lem3}, 
\begin{equation} \label{Cbar}
\varepsilon u^\varepsilon_i \rightarrow \overline{C} 
\quad \text{ uniformly in }\mathbb{T}^n,
\ \text{for } i=1,2,
\end{equation}
for some constant $\overline{C} \in \mathbb{R}$. 
Furthermore, still up to subsequences, 
\begin{equation} \label{convergences}
\left\{ \begin{aligned}
u^\varepsilon_1 - \min_{\mathbb T^n} u^\varepsilon_1 &\rightarrow u_1, \vspace{.05in}\\
u^\varepsilon_2 - \min_{\mathbb T^n} u^\varepsilon_2 &\rightarrow u_2,  \\
\end{aligned} \right. 
\quad \mbox{ and } \quad
\quad
\left\{ \begin{aligned}
-\varepsilon u^\varepsilon_1+
c_1( \min_{\mathbb T^n} u^\varepsilon_1- \min_{\mathbb T^n} u^\varepsilon_2) &\to \overline{H}_1, \vspace{.05in}\\
-\varepsilon u^\varepsilon_2+
c_2( \min_{\mathbb T^n} u^\varepsilon_2- \min_{\mathbb T^n} u^\varepsilon_1) &\to \overline{H}_2, \\
\end{aligned} \right. 
\end{equation}
uniformly in $\mathbb T^n$, for some functions $ {u_1, u_2} \in C(\mathbb{T}^n)$ 
and some constants $\overline{H}_1, \overline{H}_2 \in \mathbb{R}$.
From \eqref{convergences} it follows that the functions $(u_1,u_2)$ and the constants $(\overline{H} _1, \overline{H} _2)$
are such that (\ref{cell}) holds, in the viscosity sense.} 
\end{proof}

{\begin{Remark} \label{nouniq}
In general, $\overline{H}_1$ and $\overline{H}_2$ are not unique. 
Indeed, let $(u_1,u_2)$ be a viscosity solution of (\ref{cell}). 
Then, for every pair of constants $(C_1, C_2)$, the pair of functions
$(\widetilde{u}_1,\widetilde{u}_2)$ where
$\widetilde{u}_1 := u_1 +C_1$ and $\widetilde{u}_2 := u_2+C_2$ is still a viscosity solution
of (\ref{cell}), {with}
$$
\widetilde{H}_1 := \overline{H}_1 +c_1 (C_1-C_2), \quad \quad \quad 
~\widetilde{H}_2 := \overline{H}_2 +c_2(C_2-C_1),
$$
in place of $\overline{H}_1$ and $\overline{H}_2$, respectively.
Anyway, we have $c_2 \overline{H}_1 + c_1 \overline{H}_2
=c_2 \widetilde{H}_1 + c_1 \widetilde{H}_2$.
This suggests that, although $\overline{H}_1$ and $\overline{H}_2$ may vary, 
the expression $c_2 \overline{H}_1 + c_1 \overline{H}_2$ is unique. 
Theorem \ref{cell_const2} shows that this is the case.
\end{Remark}}

\begin{proof}[Proof of Theorem \ref{cell_const2}]
Without loss of generality, we may assume $c_1=c_2=1$.
Suppose, by contradiction, that there exist two pairs $(\lambda_1, \lambda_2) \in \mathbb R^2$ 
and $(\mu_1, \mu_2) \in \mathbb R^2$, 
and four functions $u_1, u_2, \widetilde{u}_1, \widetilde{u}_2\in C(\mathbb T^n)$ 
such that $\lambda_1+\lambda_2<\mu_1+\mu_2$ and 
\begin{equation}
\left\{ \begin{aligned}
u_1-u_2 + H_1(x,Du_1) &=\lambda_1 \vspace{.05in}\\
-u_1+u_2 + H_2(x,Du_2) &=\lambda_2 \\
\end{aligned} \right. 
\quad \mbox{in}~ \mathbb T^n,
\notag
\end{equation}
and
\begin{equation}
\left\{ \begin{aligned}
 \widetilde{u}_1 - \widetilde{u}_2 + H_1(x,D \widetilde{u}_1) &=\mu_1 \vspace{.05in}\\
 - \widetilde{u}_1 + \widetilde{u}_2 + H_2(x,D \widetilde{u}_2) &=\mu_2 \\
\end{aligned} \right. 
\quad \mbox{in}~ \mathbb T^n.
\notag
\end{equation}
By possibly substituting $u_1$ and $u_2$
with functions $\widehat{u}_1 := u_1 + C_1$ and $\widehat{u}_2 := u_2 + C_2$,
for suitable constants $C_1$ and $C_2$, we may always assume that 
$\lambda_1<\mu_1, \lambda_2<\mu_2$.

In the same way, by a further substitution  
$\overline{u}_1 := u_1 + C_3$, $\overline{u}_2 := u_2 + C_3$, with $C_3 > 0$ large enough,
we may assume that $u_1 > \widetilde{u}_1, u_2 > \widetilde{u}_2$.
Then, there exists $\varepsilon>0$ small enough such that
\begin{equation}
\left\{ \begin{aligned}
 (\varepsilon+1) u_1-u_2 + H_1(x,Du_1) & <  (\varepsilon+1) \widetilde{u}_1 - \widetilde{u}_2 + H_1(x,D\widetilde{u}_1) \vspace{.05in}\\
(\varepsilon+1) u_2 - u_1 + H_2 (x,Du_2) & < (\varepsilon+1)\widetilde{u}_2-\widetilde{u}_1+ H_2(x,D\widetilde{u}_2) \\
\end{aligned} \right. 
\quad \mbox{in}~ \mathbb T^n.
\notag
\end{equation}
Observe that the coefficients of the last system satisfy the coupling assumptions
(H\ref{weakcou}.2) and (H\ref{weakcou}.3).
Hence, applying the Comparison Principle in \cite{EL} and \cite{IK1}, 
we conclude that $u_1 < \widetilde{u}_1$ and $u_2 < \widetilde{u}_2$, which gives a contradiction.
\end{proof}

{\begin{Remark} \label{Hbar}
Multiplying the two convergences in the right in \eqref{convergences} by $c_2$ and $c_1$, 
respectively, one can see that 
$$
- \overline{C} = \overline{H}= \dfrac{\mu}{c_1+c_2}.
$$
Here, $\overline{C}$ is defined in \eqref{Cbar}, $\mu$ given by Theorem \ref{cell_const2},
and $\overline{H}$ is the unique constant  such that \eqref{falsj} has viscosity solutions.
We call $\overline{H}$ the \textit{effective Hamiltonian} of 
the cell problem for the weakly coupled system of Hamilton--Jacobi equations.
\end{Remark}}
The following is the main theorem of the section.
See also \cite{T1} for similar results.
\begin{Theorem}
\label{cell_thm2}
There exists a constant $C>0$, independent of $\varepsilon$, such that
$$
\|\varepsilon u^\varepsilon_i + \overline{H} \|_{L^\infty} \le C \varepsilon,
\quad \text{for } i=1,2.
$$
\end{Theorem}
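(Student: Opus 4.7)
The plan is to prove the result by a direct maximum principle comparison between $u^\varepsilon_i$ and a suitably shifted viscosity solution of the cell problem.

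First, by Theorem \ref{cell_const2} and Remark \ref{Hbar}, there exists a pair $(u_1,u_2) \in C(\mathbb T^n)^2$ of viscosity solutions of \eqref{falsj} with the common right-hand side $\overline{H}=\mu/(c_1+c_2)$. By passing to the limit in the uniform bounds of Theorem \ref{cell_thm1} along the convergences in \eqref{convergences}, the pair $(u_1,u_2)$ is uniformly Lipschitz, and in particular bounded: set $M:=\|u_1\|_{L^\infty}+\|u_2\|_{L^\infty}$. The strategy is to introduce the shifted functions
$$
v^\varepsilon_i:=u_i-\overline{H}/\varepsilon,\qquad i=1,2,
$$
so that $\varepsilon v^\varepsilon_i=\varepsilon u_i-\overline{H}$ is already $O(\varepsilon)$-close to $-\overline{H}$, and then to compare $u^\varepsilon_i$ to $v^\varepsilon_i$.

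Assuming for the moment that $u_1,u_2$ are smooth, substituting $v^\varepsilon_i$ into \eqref{cell_reg} and using the cell problem satisfied by $(u_1,u_2)$ gives
\begin{equation*}
(c_j+\varepsilon)v^\varepsilon_j - c_j v^\varepsilon_{3-j}+H_j(x,Dv^\varepsilon_j)-\varepsilon^2\Delta v^\varepsilon_j=\varepsilon u_j-\varepsilon^2\Delta u_j,\qquad j=1,2,
\end{equation*}
so $(v^\varepsilon_1,v^\varepsilon_2)$ is an approximate solution with right-hand side of order $O(\varepsilon)$ in $L^\infty$. Set $w^\varepsilon_i:=u^\varepsilon_i-v^\varepsilon_i$ and let $x^*\in\mathbb T^n$ be a point where $\max_{i,\,x}w^\varepsilon_i$ is attained; without loss of generality $i=1$. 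At $x^*$ one has $Du^\varepsilon_1(x^*)=Dv^\varepsilon_1(x^*)$, $\Delta u^\varepsilon_1(x^*)\le\Delta v^\varepsilon_1(x^*)$, and $w^\varepsilon_2(x^*)\le w^\varepsilon_1(x^*)$. Subtracting the two equations at $x^*$ (the Hamiltonian terms cancel because the gradients agree) and using $c_1>0$ together with the inequality on the Laplacians, we obtain
$$
\varepsilon w^\varepsilon_1(x^*)\le (c_1+\varepsilon)w^\varepsilon_1(x^*)-c_1 w^\varepsilon_2(x^*)\le -\varepsilon u_1(x^*)+\varepsilon^2\Delta u_1(x^*)\le C\varepsilon,
$$
whence $w^\varepsilon_1(x^*)\le C$. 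The symmetric argument applied at a minimum point yields $\inf_{i,x}w^\varepsilon_i\ge -C$, so $\|w^\varepsilon_i\|_{L^\infty}\le C$. Multiplying by $\varepsilon$ gives $\|\varepsilon u^\varepsilon_i-\varepsilon u_i+\overline{H}\|_{L^\infty}\le C\varepsilon$, and since $\|\varepsilon u_i\|_{L^\infty}\le\varepsilon M$, the claim $\|\varepsilon u^\varepsilon_i+\overline{H}\|_{L^\infty}\le C\varepsilon$ follows.

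The main obstacle is that the viscosity solutions $u_1,u_2$ of the cell problem are only Lipschitz, so neither $\Delta u_i$ nor the touching argument above is available in the classical sense. To handle this I would regularize: replace $u_i$ by the inf-convolution (or equivalently a standard mollification) $u_i^\eta\in C^{1,1}(\mathbb T^n)$, which converges uniformly to $u_i$ as $\eta\to 0^+$ and satisfies the cell problem up to an error $o_\eta(1)$; the comparison above applied to $v^{\varepsilon,\eta}_i:=u_i^\eta-\overline{H}/\varepsilon$ produces an additional error that vanishes as $\eta\to 0$ for each fixed $\varepsilon$, preserving the $O(\varepsilon)$ rate. Equivalently, the whole argument can be recast as a doubling-of-variables estimate using the viscosity sub/super-solution property of $(u_1,u_2)$ and the smoothness of $(u^\varepsilon_1,u^\varepsilon_2)$, which bypasses any regularization of the cell solutions.
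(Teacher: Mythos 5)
Your strategy is genuinely different from the paper's. The paper bounds $|(\varepsilon u^\varepsilon_j)_\varepsilon|$ uniformly via the adjoint equations (Lemma \ref{cell_lem2}), then integrates in $\varepsilon$; you instead compare $u^\varepsilon_j$ directly to the shifted cell solution $u_j-\overline{H}/\varepsilon$. The core maximum-principle computation in the smooth case is correct, and the scheme can indeed be made to work; but as written it has a real error in the regularization step.

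The problem is quantitative. When you replace $u_1$ by the inf-convolution (or any regularization) $u_1^\eta$, the bound you get from the Laplacian comparison at the maximum point is $\varepsilon^2 \Delta u_1^\eta(x^*)\le C\varepsilon^2/\eta$, since inf-convolution gives semi-concavity with constant of order $1/\eta$ (not boundedness of $\Delta u_1^\eta$). The supersolution property of $u_1^\eta$ is also only approximate, with an error of order $\eta$. So what you actually obtain is $\varepsilon\, w^{\varepsilon,\eta}_1(x^*)\le C\varepsilon + C\varepsilon^2/\eta + C\eta$, and the claim that the extra error "vanishes as $\eta\to 0^+$ for each fixed $\varepsilon$" is false: the $\varepsilon^2/\eta$ term blows up. One must choose $\eta\sim\varepsilon$ (balancing the two error terms) to obtain $\varepsilon\, w^{\varepsilon,\eta}_1(x^*)\le C\varepsilon$, hence $\|w^{\varepsilon,\eta}_1\|_{L^\infty}\le C$, and only then conclude as you do. The same balanced choice, $\delta\sim\varepsilon$, is needed in the doubling-of-variables version, where the penalization $|x-y|^2/(2\delta)$ gives $\varepsilon^2\Delta u^\varepsilon_1(x^*)\le n\varepsilon^2/\delta$ together with an error $O(\delta)$ from $|H_1(x^*,p^*)-H_1(y^*,p^*)|$. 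A secondary issue: inf-convolution and mollification are not equivalent here. Inf-convolution of a supersolution gives a semi-concave approximate supersolution and works with Jensen's lemma; ordinary mollification only gives an approximate sub/supersolution when $H_j(x,\cdot)$ is convex/concave, which is not assumed in this section. Finally, a quiet but necessary point you use: the coupling structure (the cross terms $-c_1(u^\varepsilon_2-u_2)$) must be killed by the fact that the max/min over $j\in\{1,2\}$ and over $x$ (or $(x,y)$ in doubling) is taken jointly, so the same maximizing point serves both components. That is indeed how it should be set up, and with $\eta\sim\varepsilon$ (or $\delta\sim\varepsilon$) the argument closes and is arguably more elementary than the adjoint-method proof, at the cost of handling the viscosity touching machinery carefully.
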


{\bf Adjoint method:} Also in this case, we introduce the adjoint equations
associated to the linearization of the original problem.
We look for $\sigma^{1,\varepsilon}, \sigma^{2,\varepsilon}$ 
which are $\mathbb{T}^n$-periodic and such that 
\begin{equation}\label{cell_adj1}
\left\{ \begin{aligned}
- \mbox{div}(D_pH_1 \sigma^{1,\varepsilon}) + ( c_{1} + \varepsilon ) \sigma^{1,\varepsilon} - c_{2} \sigma^{2,\varepsilon} 
&= \varepsilon^2 \Delta \sigma^{1,\varepsilon}+ \varepsilon (2 - i) \delta_{x_0} \quad &\mbox{in}~ \mathbb T^n, \vspace{.05in}\\
- \mbox{div}(D_pH_2 \sigma^{2,\varepsilon}) + ( c_{2} + \varepsilon ) \sigma^{2,\varepsilon} - c_{1} \sigma^{1,\varepsilon}
&= \varepsilon^2 \Delta \sigma^{2,\varepsilon}+ \varepsilon (i - 1) \delta_{x_0} \quad &\mbox{in}~ \mathbb T^n,
\end{aligned} \right.
\end{equation}
where $i \in \{  1, 2 \} $ and $x_0 \in \mathbb T^n$ will be chosen later.
The argument used in Section \ref{sectobs} gives also in this case 
existence and uniqueness for $\sigma^{1,\varepsilon}$ and $\sigma^{2,\varepsilon}$. 
As before, we also have $\sigma^{1,\varepsilon}, \sigma^{2,\varepsilon} \in C^{\infty} (\mathbb T^n \setminus \{ x_0 \})$.
{The next two lemmas can be proven 
as Lemma \ref{kiz} and Lemma \ref{wc_lem1}, respectively.} 
\begin{Lemma}[Properties of $\sigma^{1,\varepsilon}, \sigma^{2,\varepsilon}$] \label{kir}
The functions $\sigma^{1,\varepsilon}, \sigma^{2,\varepsilon}$ satisfy the following:
\begin{itemize}
\item[(i)] $\sigma^{j,\varepsilon} \ge 0$ on $\mathbb{T}^n$   \,\, $(j=1,2)$;
\item[(ii)] Moreover, the following equality holds: 

$$
\sum_{j=1}^2 \int_{\mathbb{T}^n} \sigma^{j,\varepsilon} \,dx  = 1.
$$
\end{itemize}
\end{Lemma}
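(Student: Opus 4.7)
The plan is to mirror the two-step structure of Lemma \ref{kiz}, exploiting a crucial observation: although the original system \eqref{cell} fails the coupling assumptions of Section \ref{weakcou}, the $\varepsilon$-regularized coefficient matrix
$$
\begin{pmatrix} c_1+\varepsilon & -c_1 \\ -c_2 & c_2+\varepsilon \end{pmatrix}
$$
does satisfy analogues of (H\ref{weakcou}.2)--(H\ref{weakcou}.3), with $\alpha=\varepsilon>0$ playing the role of the positive diagonal dominance constant. This is exactly what is needed in order to run a maximum-principle argument.

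For part (i), I would first pass to yet another adjoint system: for arbitrary data $f_1,f_2\in C(\mathbb T^n)$, consider $\mathbb T^n$-periodic solutions $(z_1,z_2)$ of
\begin{equation*}
\left\{ \begin{aligned}
D_pH_1\cdot Dz_1 + (c_1+\varepsilon)z_1 - c_1 z_2 - \varepsilon^2\Delta z_1 &= f_1, \\
D_pH_2\cdot Dz_2 + (c_2+\varepsilon)z_2 - c_2 z_1 - \varepsilon^2\Delta z_2 &= f_2.
\end{aligned}\right.
\end{equation*}
The key claim is that $f_1,f_2\ge0$ implies $z_1,z_2\ge0$. Assume without loss of generality that $\min_{j,x} z_j(x)=z_1(\overline x)$; since $\mathbb T^n$ has no boundary, $\overline x$ lies in the interior. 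At this point $D z_1(\overline x)=0$, $\Delta z_1(\overline x)\ge0$, and $z_1(\overline x)\le z_2(\overline x)$; combining these with the first equation gives
$$
\varepsilon z_1(\overline x) \ge (c_1+\varepsilon)z_1(\overline x) - c_1 z_2(\overline x) = \varepsilon^2\Delta z_1(\overline x)+f_1(\overline x) \ge 0.
$$
Once positivity of $z_1,z_2$ is established, pairing them with \eqref{cell_adj1} and integrating by parts produces
$$
\int_{\mathbb T^n} f_1\sigma^{1,\varepsilon}\,dx + \int_{\mathbb T^n} f_2\sigma^{2,\varepsilon}\,dx = \varepsilon(2-i)z_1(x_0) + \varepsilon(i-1)z_2(x_0) \ge 0
$$
for every nonnegative pair $(f_1,f_2)$, which forces $\sigma^{1,\varepsilon},\sigma^{2,\varepsilon}\ge0$ by duality.

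For part (ii), the strategy is purely computational. Integrate each equation of \eqref{cell_adj1} over $\mathbb T^n$; periodicity kills the divergence and Laplacian terms, leaving the pair of linear identities
\begin{align*}
(c_1+\varepsilon)\int_{\mathbb T^n}\sigma^{1,\varepsilon}\,dx - c_2\int_{\mathbb T^n}\sigma^{2,\varepsilon}\,dx &= \varepsilon(2-i), \\
-c_1\int_{\mathbb T^n}\sigma^{1,\varepsilon}\,dx + (c_2+\varepsilon)\int_{\mathbb T^n}\sigma^{2,\varepsilon}\,dx &= \varepsilon(i-1).
\end{align*}
Summing these two relations, the $c_1,c_2$ off-diagonal contributions collapse and only the $\varepsilon$-diagonal survives, giving $\varepsilon\int_{\mathbb T^n}(\sigma^{1,\varepsilon}+\sigma^{2,\varepsilon})\,dx = \varepsilon$, which is the required identity.

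The only delicate point is the sign analysis in (i): unlike in the bounded-domain proof of Lemma \ref{kiz}, there is no boundary on which to trivially conclude positivity, so the minimum must be controlled at an interior point using only the coefficient structure. The $\varepsilon$-regularization is exactly what makes the second-derivative test yield a useful inequality; without it, the matrix would be merely degenerate-monotone and the argument would break down. Everything else is a routine adaptation of the scheme developed in Section \ref{weakcou}.
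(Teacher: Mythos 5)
Your proof is correct and follows exactly the route the paper intends: the paper states that Lemma~\ref{kir} ``can be proven as Lemma~\ref{kiz},'' and your adaptation of that argument to $\mathbb{T}^n$ — using the $\varepsilon$-diagonal dominance of the regularized coefficient matrix to run the maximum principle at an interior minimum, pairing with \eqref{cell_adj1} via periodic integration by parts for (i), and integrating and summing the adjoint equations for (ii) — is precisely what is needed. Your side remark about the absence of a boundary forcing the minimum to be interior, and about $\alpha=\varepsilon$ taking over the role of the coupling constant, is a faithful account of the only genuine difference from the Section~\ref{weakcou} setting.
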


\begin{Lemma}
\label{cell_lem1}
There exists a constant $C>0$, independent of $\varepsilon$, such that
$$
\varepsilon^2\int_{\mathbb R^n} |D^2 u^\varepsilon_1|^2 \sigma^{1,\varepsilon} \,dx  \le C,
$$
$$
\varepsilon^2 \int_{\mathbb R^n} |D^2 u^\varepsilon_2|^2 \sigma^{2,\varepsilon} \,dx \le C.
$$
\end{Lemma}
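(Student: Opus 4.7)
The plan is to mimic the strategy used in Lemma \ref{obs_lem3} and Lemma \ref{wc_lem1}, employing the adjoint equations \eqref{cell_adj1} as an integration-by-parts device for the PDEs satisfied by $w^\varepsilon_j := |D u^\varepsilon_j|^2 / 2$. First, as in the proof of Theorem \ref{cell_thm1}, by differentiating the regularized cell system \eqref{cell_reg} I would obtain that for $j=1,2$
\begin{equation*}
2(c_j+\varepsilon)\, w^\varepsilon_j + D_p H_j \cdot D w^\varepsilon_j - c_j\, D u^\varepsilon_1 \cdot D u^\varepsilon_2 + D_x H_j \cdot D u^\varepsilon_j = \varepsilon^2 \Delta w^\varepsilon_j - \varepsilon^2 |D^2 u^\varepsilon_j|^2 \quad \text{in } \mathbb{T}^n .
\end{equation*}

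Next, I would multiply the $j=1$ equation by $\sigma^{1,\varepsilon}$ and integrate over $\mathbb{T}^n$. Since the domain has no boundary, integration by parts transfers the transport and Laplace operators onto $\sigma^{1,\varepsilon}$; substituting the first equation of \eqref{cell_adj1}, the operator $-\varepsilon^2 \Delta \sigma^{1,\varepsilon} + 2(c_1+\varepsilon)\sigma^{1,\varepsilon} - \mbox{div}(D_p H_1 \sigma^{1,\varepsilon})$ that multiplies $w^\varepsilon_1$ collapses to $\varepsilon(2-i)\delta_{x_0} + (c_1+\varepsilon)\sigma^{1,\varepsilon} + c_2 \sigma^{2,\varepsilon}$. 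Rearranging, I expect to reach the identity
\begin{equation*}
\varepsilon^2 \int_{\mathbb{T}^n} |D^2 u^\varepsilon_1|^2 \sigma^{1,\varepsilon} \, dx + \varepsilon(2-i)\, w^\varepsilon_1(x_0) + \int_{\mathbb{T}^n} \left[ (c_1+\varepsilon)\sigma^{1,\varepsilon} + c_2 \sigma^{2,\varepsilon} \right] w^\varepsilon_1 \, dx = \int_{\mathbb{T}^n} \left[ c_1\, D u^\varepsilon_1 \cdot D u^\varepsilon_2 - D_x H_1 \cdot D u^\varepsilon_1 \right] \sigma^{1,\varepsilon} \, dx,
\end{equation*}
and the analogous formula for $j=2$ (with the roles of $\sigma^{1,\varepsilon}$, $\sigma^{2,\varepsilon}$ and of the $c_j$ swapped appropriately).

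To conclude, I would observe that, thanks to (H\ref{secweak}.2) and to the non-negativity of $\sigma^{1,\varepsilon}$ and $\sigma^{2,\varepsilon}$ from Lemma \ref{kir}(i), every term on the left-hand side of the identity above is non-negative, so the first summand is controlled by the right-hand side alone. By Theorem \ref{cell_thm1}, both $\| D u^\varepsilon_1 \|_{L^\infty}$ and $\| D u^\varepsilon_2 \|_{L^\infty}$ are bounded independently of $\varepsilon$, which also forces $\| D_x H_1(\cdot, D u^\varepsilon_1(\cdot)) \|_{L^\infty} \le C$; combined with Lemma \ref{kir}(ii), which gives $\int_{\mathbb{T}^n} \sigma^{1,\varepsilon} \, dx \le 1$, this yields $\varepsilon^2 \int |D^2 u^\varepsilon_1|^2 \sigma^{1,\varepsilon} \, dx \le C$, and the same argument (with the adjoint system chosen with the Dirac mass on the second equation) gives the bound for $j=2$. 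The only delicate point of the proof is the sign bookkeeping after invoking \eqref{cell_adj1}: the cross-contribution $c_2 \int w^\varepsilon_1 \sigma^{2,\varepsilon} \, dx$ arising from the coupling must end up with a favorable sign, which is precisely what the positivity of $c_1$ and $c_2$ in (H\ref{secweak}.2) guarantees.
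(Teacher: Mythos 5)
Your proposal is correct and follows exactly the approach the paper indicates (the paper states that Lemma \ref{cell_lem1} is proved by arguing as in Lemma \ref{wc_lem1}, which in turn mimics Lemma \ref{obs_lem3}): multiply the Bochner-type identity for $w^\varepsilon_j$ by $\sigma^{j,\varepsilon}$, integrate by parts on $\mathbb{T}^n$, substitute \eqref{cell_adj1}, and use Lemma \ref{kir} together with the gradient bounds from Theorem \ref{cell_thm1}. One small remark: you do not actually need to switch the Dirac mass to the second equation to obtain the $j=2$ bound, since the term $\varepsilon(i-1)w^\varepsilon_2(x_0)$ arising from the Dirac contribution is non-negative for either choice of $i$ and can simply be dropped from the left-hand side; this matters only in that Lemma \ref{cell_lem2} needs both estimates to hold simultaneously for a single fixed pair $(i,x_0)$, which your argument does in fact give.
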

Finally, next lemma allows us to prove Theorem \ref{cell_thm2}.
\begin{Lemma}
\label{cell_lem2}
There exists a constant $C>0$, independent of $\varepsilon$, such that
$$
\max_{\mathbb T^n} |(\varepsilon u^\varepsilon_1)_\varepsilon|, 
\quad \quad \max_{\mathbb T^n} |(\varepsilon u^\varepsilon_2)_\varepsilon| \le C.
$$
\end{Lemma}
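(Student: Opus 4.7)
Set $v^\varepsilon_j := (\varepsilon u^\varepsilon_j)_\varepsilon = u^\varepsilon_j + \varepsilon u^\varepsilon_{j,\varepsilon}$ for $j=1,2$; standard elliptic regularity ensures that $u^\varepsilon_j$ is smooth in $\varepsilon$ for $\varepsilon>0$, so this makes sense. By periodicity, for each $j$ the maximum of $|v^\varepsilon_j|$ is attained at some $x_0=x_0^j \in \mathbb T^n$, and, since the two bounds are symmetric, I will treat only the case $j=1$ and obtain the other one by swapping indices. The first step is to differentiate each equation of \eqref{cell_reg} with respect to $\varepsilon$ and rearrange, producing the linear system
\begin{equation*}
\left\{ \begin{aligned}
(c_1+\varepsilon)\, u^\varepsilon_{1,\varepsilon} - c_1\, u^\varepsilon_{2,\varepsilon} + D_p H_1 \cdot D u^\varepsilon_{1,\varepsilon} - \varepsilon^2 \Delta u^\varepsilon_{1,\varepsilon} &= 2\varepsilon \Delta u^\varepsilon_1 - u^\varepsilon_1, \\
(c_2+\varepsilon)\, u^\varepsilon_{2,\varepsilon} - c_2\, u^\varepsilon_{1,\varepsilon} + D_p H_2 \cdot D u^\varepsilon_{2,\varepsilon} - \varepsilon^2 \Delta u^\varepsilon_{2,\varepsilon} &= 2\varepsilon \Delta u^\varepsilon_2 - u^\varepsilon_2,
\end{aligned} \right.
\end{equation*}
which is exactly the linearization of \eqref{cell_reg} applied to $(u^\varepsilon_{1,\varepsilon},u^\varepsilon_{2,\varepsilon})$, with right-hand side $(2\varepsilon \Delta u^\varepsilon_j - u^\varepsilon_j)_{j=1,2}$.

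Next, I invoke the adjoint system \eqref{cell_adj1} with $i=1$ and the point $x_0$ above: I multiply the first (resp.\ second) line of the displayed system by $\sigma^{1,\varepsilon}$ (resp.\ $\sigma^{2,\varepsilon}$), integrate over $\mathbb T^n$, integrate by parts (periodicity kills every boundary term), and sum. After regrouping the coefficients of $u^\varepsilon_{1,\varepsilon}$ and $u^\varepsilon_{2,\varepsilon}$, the left-hand side is precisely the pairing of $(u^\varepsilon_{1,\varepsilon},u^\varepsilon_{2,\varepsilon})$ against \eqref{cell_adj1}, hence collapses to $\varepsilon u^\varepsilon_{1,\varepsilon}(x_0)$. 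This yields the identity
$$
\varepsilon u^\varepsilon_{1,\varepsilon}(x_0) = \int_{\mathbb T^n}\bigl(2\varepsilon \Delta u^\varepsilon_1 - u^\varepsilon_1\bigr)\sigma^{1,\varepsilon}\,dx + \int_{\mathbb T^n}\bigl(2\varepsilon \Delta u^\varepsilon_2 - u^\varepsilon_2\bigr)\sigma^{2,\varepsilon}\,dx.
$$
Adding $u^\varepsilon_1(x_0)$ to both sides and using $\int_{\mathbb T^n}\sigma^{1,\varepsilon}\,dx + \int_{\mathbb T^n}\sigma^{2,\varepsilon}\,dx = 1$ from Lemma \ref{kir}(ii), I rewrite the right-hand side as
$$
v^\varepsilon_1(x_0) = \int_{\mathbb T^n}\bigl(u^\varepsilon_1(x_0) - u^\varepsilon_1\bigr)\sigma^{1,\varepsilon}\,dx + \int_{\mathbb T^n}\bigl(u^\varepsilon_1(x_0) - u^\varepsilon_2\bigr)\sigma^{2,\varepsilon}\,dx + 2\varepsilon \sum_{j=1}^2 \int_{\mathbb T^n}\Delta u^\varepsilon_j\,\sigma^{j,\varepsilon}\,dx.
$$

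At this point every term is controlled by an earlier estimate. Lemma \ref{cell_lem3} makes the integrands $u^\varepsilon_1(x_0)-u^\varepsilon_1$ and $u^\varepsilon_1(x_0)-u^\varepsilon_2$ uniformly $O(1)$, while $\sigma^{j,\varepsilon}\ge 0$ and Lemma \ref{kir}(ii) give $\int \sigma^{j,\varepsilon}\le 1$, so the first two integrals are bounded by a constant. For the last two, Cauchy--Schwarz together with Lemma \ref{cell_lem1} (noting $|\Delta u^\varepsilon_j|^2 \le n\,|D^2 u^\varepsilon_j|^2$) gives
$$
\varepsilon\Bigl|\int_{\mathbb T^n}\Delta u^\varepsilon_j\,\sigma^{j,\varepsilon}\,dx\Bigr| \le \varepsilon\Bigl(\int_{\mathbb T^n}|\Delta u^\varepsilon_j|^2\,\sigma^{j,\varepsilon}\,dx\Bigr)^{1/2}\Bigl(\int_{\mathbb T^n}\sigma^{j,\varepsilon}\,dx\Bigr)^{1/2} \le C.
$$
Repeating the argument with $i=2$ and the maximum point of $|v^\varepsilon_2|$ closes the second bound. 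I expect the main obstacle to be \emph{not} any individual estimate but the algebraic step of adding $u^\varepsilon_1(x_0)$ and using Lemma \ref{kir}(ii) to convert the potentially divergent quantities $u^\varepsilon_1$, $u^\varepsilon_2$ (which, by Theorem \ref{cell_thm1}, are only $O(1/\varepsilon)$) into the differences $u^\varepsilon_1(x_0)-u^\varepsilon_j$, which are $O(1)$ by Lemma \ref{cell_lem3}; without this cancellation the right-hand side would blow up.
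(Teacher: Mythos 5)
Your proof is correct and follows essentially the same route as the paper's: differentiate \eqref{cell_reg} in $\varepsilon$, pair the resulting system against the adjoint system \eqref{cell_adj1} at the maximum point to collapse the left-hand side to $\varepsilon u^\varepsilon_{1,\varepsilon}(x_0)$, then use $\sum_j\int_{\mathbb T^n}\sigma^{j,\varepsilon}\,dx=1$ (Lemma \ref{kir}(ii)) together with the oscillation bounds of Lemma \ref{cell_lem3} to absorb the dangerous $O(1/\varepsilon)$ terms $u^\varepsilon_j$ into bounded differences $u^\varepsilon_1(x_0)-u^\varepsilon_j$, and finally control the $\Delta u^\varepsilon_j$ integrals via Cauchy--Schwarz and Lemma \ref{cell_lem1}. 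The only cosmetic difference is that the paper works with the joint maximum of $(\varepsilon u^\varepsilon_j)_\varepsilon$ over both $j=1,2$ and $x\in\mathbb T^n$ (and then separately with the joint minimum), whereas you take, for each fixed $j$, the point where $|v^\varepsilon_j|$ is maximized and run the argument with $i=j$; since Lemma \ref{cell_lem3} holds for arbitrary pairs of points, both reductions are valid and lead to the same estimate.
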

\begin{proof}
Differentiating (\ref{cell_reg}) w.r.t. $\varepsilon$,
\begin{equation}
\left\{ \begin{aligned}
D_pH_1\cdot Du^\varepsilon_{1,\varepsilon} + (c_1+\varepsilon) u^\varepsilon_{1,\varepsilon} 
+ u^\varepsilon_1 - c_1 u^\varepsilon_{2,\varepsilon} 
&=\varepsilon^2 \Delta u^\varepsilon_{1,\varepsilon} 
+ 2 \varepsilon \Delta u^\varepsilon_1, \vspace{.05in}\\
D_pH_2\cdot Du^\varepsilon_{2,\varepsilon} + (c_2+\varepsilon) u^\varepsilon_{2,\varepsilon} 
+ u^\varepsilon_2 - c_2 u^\varepsilon_{1,\varepsilon}
&= \varepsilon^2 \Delta u^\varepsilon_{2,\varepsilon} + 2\varepsilon \Delta u^\varepsilon_2, \\
\end{aligned} \right. 
\notag
\end{equation}
where we set $u^\varepsilon_{j,\varepsilon} := \partial u^\varepsilon_{j} / \partial \varepsilon$, $j=1,2$.
Without loss of generality, we may assume that there exists $x_2\in {\mathbb T^n}$ such that
\begin{equation*}
\max_{\substack{ j=1,2 \\  x \in \mathbb T^n }} ( \varepsilon u^{\varepsilon}_j (x))_{\varepsilon}
= \max_{\substack{ j=1,2 \\  x \in \mathbb T^n }} 
\left\{ \varepsilon  u^{\varepsilon}_{j, \varepsilon} (x) + u^{\varepsilon}_{j} (x) \right\}
= \varepsilon u^\varepsilon_{1,\varepsilon} (x_2)
+ u^{\varepsilon}_{1} (x_2).
\end{equation*} 
Choosing $x_0=x_2$ in the adjoint equation (\ref{cell_adj1}),  
and repeating the {steps in the proof of} Theorem~\ref{obs_thm1}, we get
\begin{equation}
\label{cell2} \begin{split}
&\varepsilon u^\varepsilon_{1,\varepsilon} (x_2)
+\int_{\mathbb T^n} u^\varepsilon_1 \sigma^{1,\varepsilon}\,dx
+ \int_{\mathbb T^n} u^\varepsilon_2 \sigma^{2,\varepsilon}\,dx \\
&\hspace{.2cm}\le 2\varepsilon \int_{\mathbb T^n} |\Delta
u^\varepsilon_1| \sigma^{1,\varepsilon}\,dx 
+ 2\varepsilon \int_{\mathbb T^n} |\Delta
u^\varepsilon_2| \sigma^{2,\varepsilon}\,dx\le C, 
\end{split} \end{equation}
where the latter inequality follows by repeating the chain of inequalities in \eqref{obs7}
and thanks to Lemma \ref{cell_lem1}.
Using Lemma \ref{cell_lem3} and property (ii) of Lemma \ref{kir} we have
\begin{equation*} \begin{split}
&\left| \int_{\mathbb T^n} u^\varepsilon_1 (x) \sigma^{1,\varepsilon}\,dx
+ \int_{\mathbb T^n} u^\varepsilon_2 (x) \sigma^{2,\varepsilon}\,dx
- u^{\varepsilon}_{1} (x_2) \right| \\
&\hspace{.5cm}= \left| \int_{\mathbb T^n} ( u^\varepsilon_1 (x) - u^{\varepsilon}_{1} (x_2) ) \sigma^{1,\varepsilon}\,dx
+ \int_{\mathbb T^n} ( u^\varepsilon_2 (x) - u^{\varepsilon}_{1} (x_2) ) \sigma^{2,\varepsilon}\,dx \right| \leq C.
\end{split} \end{equation*}
In view of the previous inequality, \eqref{cell2} becomes
\begin{equation*}
\varepsilon u^\varepsilon_{1,\varepsilon} (x_2)
+ u^{\varepsilon}_{1} (x_2) \leq C,
\end{equation*}
thus giving the bound from above.
The same argument, applied to $\min_{\substack{ j=1,2 \\  x \in \mathbb T^n }} ( \varepsilon u^{\varepsilon}_j (x))_{\varepsilon}$,
allows to prove the bound from below.
\end{proof}

\begin{proof}[Proof of Theorem \ref{cell_thm2}]

The theorem immediately follows by using Lemma \ref{cell_lem2}. 

\end{proof}

\begin{Remark}

(i). In order to achieve existence and uniqueness of the effective Hamiltonian $\overline{H}$
one can require either  (H4.1) or the usual coercive assumption (i.e. 
$H_1, H_2$ are uniformly coercive in $p$). 
Indeed one can consider the regularized system
\begin{equation}
\left\{ \begin{aligned}
(c_1+\varepsilon)u^\varepsilon_1 - c_1 u^\varepsilon_2 + H_1(x,Du^\varepsilon_1)
& = 0
\vspace{.05in}\\
(c_2+\varepsilon) u^\varepsilon_2 - c_2 u^\varepsilon_1 + H_2(x,D u^\varepsilon_2 )
&=0 \\
\end{aligned} \right. 
\quad \mbox{in}~ \mathbb T^n,
\label{cell_rem}
\end{equation}
and derive the results similarly to what we did above by using the coercivity of $H_1, H_2$.
We require {\rm (H4.1)} in order to get 
the speed of convergence as in Theorem \ref{cell_thm2}. \\
(ii). By using the same arguments, we can show that for any $P \in \mathbb R^n$,
there exist a pair of constants $(\overline{H}_1(P), \overline{H}_2(P))$ such that the system
\begin{equation*}
\left\{ \begin{aligned}
c_1u_1 - c_1 u_2 + H_1(x,P+Du_1)
& = \overline{H}_1(P)
\vspace{.05in}\\
c_2 u_2 - c_2 u_1 + H_2(x,P+D u_2 )
&=\overline{H}_2(P) \\
\end{aligned} \right. 
\quad \mbox{in}~ \mathbb T^n,
\end{equation*}
admits a solution $(u_1(\cdot,P),u_2(\cdot,P)) \in C(\mathbb T^n)^2$.
Moreover $\overline{H}(P)$, the effective Hamiltonian, is unique and
$$
\overline{H}(P)=\dfrac{c_2\overline{H}_1(P)+c_1\overline{H}_2(P)}{c_1+c_2}.
$$
\end{Remark}

\end{section}

\begin{section}{weakly coupled systems of obstacle type} \label{kilo}

In this last section we apply the Adjoint Method 
to weakly coupled systems of obstacle type.
Let $H_1, H_2: \overline{U} \times \mathbb{R}^n \to \mathbb{R}$ be smooth Hamiltonians, 
and let  $\psi_1, \psi_2 : \overline{U} \to \mathbb{R}$ be smooth functions describing the obstacles.
We assume that there exists $\alpha > 0$ such that
\begin{equation} \label{strpos}
 \psi_1, \, \, \psi_2 \geq \alpha \quad \text{ in }\overline{U},
\end{equation}
and consider the system
\begin{equation} \label{ki}
\left\{ \begin{aligned}
\max\{ u_1 - u_2 - \psi_1 , u_1+H_1(x,Du_1) \}&=0 \quad\mbox{in}~ U, \vspace{.05in}\\
\max\{  u_2 - u_1 - \psi_2 , u_2 +H_2(x, D u_2 ) \}&=0 \quad\mbox{in}~ U, \\
\end{aligned} \right.
\end{equation}
with boundary conditions $u_1 \mid_{\partial U}= u_2 \mid_{\partial U}= 0$.
We observe that \eqref{strpos} guarantees the compatibility of the boundary conditions, 
since $ \psi_1, \psi_2 > 0$ on $\partial U$.

Although the two equations in \eqref{ki} are coupled just through the difference $u_1 - u_2$, 
this problem turns out to be more difficult that the correspondent scalar equation \eqref{obs_eqn} studied in Section \ref{sectobs}.
For this reason, the hypotheses we require now are stronger.
We assume that

\begin{itemize}

\item[(H\ref{kilo}.1)] $H_j (x, \cdot)$ is convex for every $x \in \overline{U}$,  $j=1,2$.

\item[(H\ref{kilo}.2)] Superlinearity in $p$:
\begin{equation*}
\lim_{|p| \to \infty} \frac{H_j (x,p)}{|p|} = + \infty \quad \quad \text{ uniformly in }x, \quad j=1,2.
\end{equation*}

 \item[(H\ref{kilo}.3)] $|D_x H_j (x,p)| \leq C$ for each $(x,p) \in \overline{U} \times \mathbb{R}^n$, $j=1,2$.

 \item[(H\ref{kilo}.4)] There exist $\Phi_1,  \Phi_2 \in C^2 (U) \cap C^1 (\overline U)$ 
with $\Phi_j =0$ on $\partial U$ ($j=1,2$),  $- \psi_2 \leq \Phi_1 - \Phi_2 \leq \psi_1$, and such that
 $$
 \Phi_j +H_j (x,D\Phi_j) < 0\quad \mbox{in}~ \overline{U} \quad (j=1,2).
 $$
\end{itemize}
Let $\varepsilon > 0$ and let $\gamma^{\varepsilon}: \mathbb{R} \to [0, + \infty)$
be the function defined by \eqref{gammaep}.
We make in this section the additional assumption that $\gamma$ is convex.
We approximate \eqref{ki} by the following system
\begin{equation} \label{tg}
\left\{ \begin{aligned}
u^\varepsilon_1 +H_1 (x,Du^\varepsilon_1 ) 
+\gamma^\varepsilon(u^\varepsilon_1 - u^{\varepsilon}_2 - \psi_1 ) 
= \varepsilon \Delta u^\varepsilon_1 \quad\mbox{in}~U, \vspace{.05in}\\
u_2^\varepsilon+H_2 (x,D u_2^\varepsilon) 
+\gamma^\varepsilon(u_2^\varepsilon- u_1^{\varepsilon} - \psi_2 ) 
= \varepsilon \Delta u_2^\varepsilon \quad\mbox{in}~U. \vspace{.05in}
\end{aligned} \right.
\end{equation}
We are now ready to state the main result of the section.
\begin{Theorem}
\label{obs_speedfin}
There exists a positive constant $C$, independent of
$\varepsilon$, such that
\begin{equation}
 \|u^\varepsilon_i - u_i\|_{L^\infty}  \le C \varepsilon^{1/2},
\quad \text{for } i=1,2.
\notag
\end{equation}
\end{Theorem}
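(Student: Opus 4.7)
The plan is to follow the three-step template of Theorem \ref{obs_speed}, adapted to the coupled system \eqref{tg}. The additional hypotheses (H\ref{kilo}.1)--(H\ref{kilo}.4), in particular convexity of $H_j$ and of $\gamma$ and boundedness of $D_xH_j$, will compensate for the loss of the pointwise bound \eqref{hfj}, which is no longer at our disposal in the coupled setting.

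\emph{Step I (A priori $C^1$ estimates).} Uniform $L^\infty$ bounds on $(u^\varepsilon_1, u^\varepsilon_2)$ follow by evaluating \eqref{tg} at interior extrema of each $u^\varepsilon_j$ and using $\gamma^\varepsilon \ge 0$. Boundary gradient bounds come from the barrier pair $(\Phi_1, \Phi_2)$ of (H\ref{kilo}.4) below and $v = \mu d(x)$ above (with $\mu$ large, using (H\ref{kilo}.2) and the $L^\infty$ bound). For the interior gradient bound, set $w^\varepsilon_j := |Du^\varepsilon_j|^2/2$; the equation for $w^\varepsilon_j$ now carries a cross-term $-(\gamma^\varepsilon)'_j\, Du^\varepsilon_j \cdot Du^\varepsilon_{3-j}$ produced by the coupling, which is absorbed via Young's inequality into the positive zeroth-order term $2(1+(\gamma^\varepsilon)'_j) w^\varepsilon_j$. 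Convexity (H\ref{kilo}.1) and superlinearity (H\ref{kilo}.2), together with (H\ref{kilo}.3), then close the Bernstein estimate as in Proposition \ref{obs1}.

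\emph{Step II (Adjoint system).} Set $a^\varepsilon_j := (\gamma^\varepsilon)'(u^\varepsilon_j - u^\varepsilon_{3-j} - \psi_j)$. The formal linearization of \eqref{tg} is the weakly coupled operator
\begin{equation*}
L^\varepsilon(z_1,z_2) := \begin{pmatrix} (1+a^\varepsilon_1) z_1 - a^\varepsilon_1 z_2 + D_pH_1 \cdot Dz_1 - \varepsilon \Delta z_1 \\ -a^\varepsilon_2 z_1 + (1+a^\varepsilon_2) z_2 + D_pH_2 \cdot Dz_2 - \varepsilon \Delta z_2 \end{pmatrix},
\end{equation*}
whose row sums equal $1$ and whose off-diagonal coefficients are non-positive. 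A minimum-principle argument at a global interior minimum of $\min(z_1,z_2)$ then yields $f_1,f_2 \ge 0 \Rightarrow z_1, z_2 \ge 0$ for the forward system, whence by duality and the Fredholm alternative the adjoint
\begin{equation*}
\left\{ \begin{aligned}
(1+a^\varepsilon_1) \sigma^{1,\varepsilon} - a^\varepsilon_2 \sigma^{2,\varepsilon} - \mbox{div}(D_pH_1 \sigma^{1,\varepsilon}) &= \varepsilon \Delta \sigma^{1,\varepsilon} + (2-i)\delta_{x_0}, \\
-a^\varepsilon_1 \sigma^{1,\varepsilon} + (1+a^\varepsilon_2) \sigma^{2,\varepsilon} - \mbox{div}(D_pH_2 \sigma^{2,\varepsilon}) &= \varepsilon \Delta \sigma^{2,\varepsilon} + (i-1)\delta_{x_0},
\end{aligned} \right.
\end{equation*}
with zero Dirichlet data and $i \in \{1,2\}$, admits a unique pair of non-negative solutions, smooth off $x_0$, exactly as in Lemma \ref{kiz}. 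Integrating the system yields a mass identity bounding $\int_U(\sigma^{1,\varepsilon}+\sigma^{2,\varepsilon})\,dx$, and testing the Bernstein equations for $w^\varepsilon_j$ against $\sigma^{j,\varepsilon}$ and summing gives the Hessian estimate $\varepsilon \sum_{j=1}^2 \int_U |D^2 u^\varepsilon_j|^2 \sigma^{j,\varepsilon}\,dx \le C$.

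\emph{Step III (Conclusion and main obstacle).} Differentiating \eqref{tg} in $\varepsilon$ and testing against $(\sigma^{1,\varepsilon}, \sigma^{2,\varepsilon})$, with $i$ and $x_0$ chosen so that $\max_{j,x}|u^\varepsilon_{j,\varepsilon}|$ is attained, gives
\begin{equation*}
|u^\varepsilon_{j,\varepsilon}(x_0)| \le \sum_{k=1}^2 \int_U \bigl|\gamma^\varepsilon_\varepsilon(u^\varepsilon_k - u^\varepsilon_{3-k} - \psi_k)\bigr|\, \sigma^{k,\varepsilon}\,dx + \sum_{k=1}^2 \int_U |\Delta u^\varepsilon_k|\,\sigma^{k,\varepsilon}\,dx,
\end{equation*}
in which the Hessian sum is $O(\varepsilon^{-1/2})$ by Cauchy--Schwarz and Step II. \textbf{The hard part} is the first sum: in the scalar case \eqref{hfj} gave the pointwise domination $|\gamma^\varepsilon_\varepsilon| \le C(\gamma^\varepsilon)'$, but \eqref{hfj} is unavailable here---at an interior maximum of $u^\varepsilon_1 - u^\varepsilon_2 - \psi_1$ we control only $D(u^\varepsilon_1 - u^\varepsilon_2)$, not $Du^\varepsilon_1$ and $Du^\varepsilon_2$ separately, so the scalar comparison argument of Lemma \ref{obs_lem4} breaks down. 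I would replace it by a chain of integrated estimates: first, testing \eqref{tg} itself against $\sigma^{j,\varepsilon}$ and using Step I yields $\int_U \gamma^\varepsilon(u^\varepsilon_k - u^\varepsilon_{3-k} - \psi_k)\,\sigma^{k,\varepsilon}\,dx \le C$; second, the mass identity provides $\int_U (\gamma^\varepsilon)'(u^\varepsilon_k - u^\varepsilon_{3-k} - \psi_k)\,\sigma^{k,\varepsilon}\,dx \le C$; finally, exploiting the convexity of $\gamma$ (so that $s\,(\gamma^\varepsilon)'(s) \ge \gamma^\varepsilon(s)$ and $(\gamma^\varepsilon)' \le 1/\varepsilon$) together with the identity $|\gamma^\varepsilon_\varepsilon(s)| = (|s|/\varepsilon)(\gamma^\varepsilon)'(s)$ and a Cauchy--Schwarz splitting between $(\gamma^\varepsilon)'$ and $\gamma^\varepsilon/\varepsilon$ would yield $\int_U |\gamma^\varepsilon_\varepsilon|\,\sigma^{k,\varepsilon}\,dx \le C\varepsilon^{-1/2}$. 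Combining both bounds gives $|u^\varepsilon_{j,\varepsilon}| \le C\varepsilon^{-1/2}$, and integration in $\varepsilon$ completes the proof.
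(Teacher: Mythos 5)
Your outline captures the right shape, but it has two genuine gaps and one false claim, and they all centre on the integral $\int (\gamma^\varepsilon)' \sigma^{j,\varepsilon}\,dx$ and the pointwise penetration bound.

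First, the claim that ``the mass identity provides $\int_U (\gamma^\varepsilon)'(\theta^\varepsilon_k)\,\sigma^{k,\varepsilon}\,dx \le C$'' is false. Integrating your two adjoint equations and adding them, the terms $\int a^\varepsilon_1\sigma^{1,\varepsilon} - \int a^\varepsilon_1\sigma^{1,\varepsilon}$ and $\int a^\varepsilon_2\sigma^{2,\varepsilon} - \int a^\varepsilon_2\sigma^{2,\varepsilon}$ cancel exactly, leaving only $\int(\sigma^{1,\varepsilon}+\sigma^{2,\varepsilon})\,dx \le 1$. This cancellation is the essential structural difference from the scalar case (where the single adjoint gives $\int(1+(\gamma^\varepsilon)')\sigma\le 1$ for free), and it is precisely why the paper needs a separate, nontrivial argument: Lemma~\ref{gamprimeb}. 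That lemma is where the \emph{convexity} of $H_j$ (via $H_j(x,p)-D_pH_j\cdot p\le H_j(x,0)$), the convexity of $\gamma$ (via $\gamma^\varepsilon(s)-(\gamma^\varepsilon)'(s)\,s\le 0$), and the strict positivity $\psi_j\ge\alpha>0$ are used together, combined with the duality identities \eqref{gbh}--\eqref{gbh2}. Without it, the bound $\int (\gamma^\varepsilon)'\sigma^{j,\varepsilon}\,dx\le C$ is simply not available.

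Second, your Step~I gradient bound does not close. The Bernstein equation for $w^\varepsilon_1$ carries the term $-(\gamma^\varepsilon)'\mid_{\theta^\varepsilon_1}\,Du^\varepsilon_1\cdot Du^\varepsilon_2$, and Young's inequality leaves a piece of the form $-\tfrac12(\gamma^\varepsilon)'|Du^\varepsilon_2|^2$ which \emph{cannot} be absorbed pointwise by $2(1+(\gamma^\varepsilon)')w^\varepsilon_1$: the coefficient $(\gamma^\varepsilon)'$ is of order $1/\varepsilon$ at the obstacle contact set and multiplies the \emph{other} gradient. The paper's workaround (Lemma~\ref{boundgrad}) is to multiply both Bernstein equations by $\sigma^{1,\varepsilon}$ and $\sigma^{2,\varepsilon}$ respectively, invoke Lemma~\ref{gamprimeb}, and \emph{add} the two estimates so that the offending integrals $\int(\gamma^\varepsilon)'\mid_{\theta^\varepsilon_1}w^\varepsilon_2\sigma^{1,\varepsilon}$ and $\int(\gamma^\varepsilon)'\mid_{\theta^\varepsilon_2}w^\varepsilon_1\sigma^{2,\varepsilon}$ cancel (see \eqref{fc1}--\eqref{fc2}); a small-$\eta$ choice in Young's inequality then closes the estimate. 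So the gradient bound already sits downstream of Lemma~\ref{gamprimeb}, which you have not established.

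Third, your assertion that the pointwise bound $\max_j\theta^\varepsilon_j/\varepsilon\le C$ ``is unavailable here'' is incorrect, but the reason it does hold is subtle and depends on the preceding lemmas. Since $\theta^\varepsilon_1+\theta^\varepsilon_2 = -\psi_1-\psi_2 < 0$, the two obstacles cannot be simultaneously active: at an interior maximum $x_1$ of $\gamma^\varepsilon(\theta^\varepsilon_1)>0$ one has $\gamma^\varepsilon(\theta^\varepsilon_2(x_1))=0$, and \emph{subtracting} the two equations of \eqref{tg} at $x_1$ makes the penalization in the second equation disappear, leaving $\theta^\varepsilon_1(x_1)+\gamma^\varepsilon(\theta^\varepsilon_1(x_1))\le\varepsilon\Delta\psi_1 - H_1(x_1,Du^\varepsilon_1)+H_2(x_1,Du^\varepsilon_2)-\psi_1(x_1)$. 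This needs the full gradient bounds (already obtained) to control $H_1(\cdot,Du^\varepsilon_1)$ and $H_2(\cdot,Du^\varepsilon_2)$; in the scalar case the maximum principle handed you $D u^\varepsilon=D\psi$ at the max, but here it only hands you $D(u^\varepsilon_1-u^\varepsilon_2)=D\psi_1$, which is exactly why the bound must be proved \emph{after}, not before, the gradient estimate. Once this pointwise bound is in hand, the Cauchy--Schwarz detour you sketch is unnecessary (and in fact does not close: your final step would require a bound on $\int(\gamma^\varepsilon)^2\sigma^{j,\varepsilon}\,dx$, which you have not produced); one simply has $|\gamma^\varepsilon_\varepsilon|\le C(\gamma^\varepsilon)'$ pointwise and hence $\int|\gamma^\varepsilon_\varepsilon|\sigma^{j,\varepsilon}\,dx\le C$ by Lemma~\ref{gamprimeb}, which is actually a stronger conclusion than your target $O(\varepsilon^{-1/2})$. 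The correct order is thus: $L^\infty$ bound $\to$ Lemma~\ref{gamprimeb} $\to$ gradient bound $\to$ Hessian bound $\to$ pointwise penetration bound $\to$ $|u^\varepsilon_{j,\varepsilon}|\le C\varepsilon^{-1/2}$.
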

In order to prove the theorem we need several lemmas.
In the sequel, we shall use the notation
\begin{equation*}
\theta^\varepsilon_1:= u^\varepsilon_1- u_2^{\varepsilon} - \psi_1 , 
\quad \quad  
\theta^\varepsilon_2:= u_2^\varepsilon- u_1^{\varepsilon} - \psi_2 .
\end{equation*}
The linearized operator corresponding to \eqref{tg} is
\begin{equation*} 
L^{\varepsilon} (z_1,z_2) : =
\left\{ \begin{aligned}
z_1 + D_p H_1 (x,Du_1^\varepsilon) \cdot D z_1 
+  (\gamma^\varepsilon)' \mid_{\theta^\varepsilon_1} (z_1 - z_2)
- \varepsilon \Delta z_1 , \vspace{.05in}\\
z_2 + D_p H_2 (x,Du_2^\varepsilon) \cdot D z_2 
+  (\gamma^\varepsilon)' \mid_{\theta^\varepsilon_2} (z_2 - z_1)
- \varepsilon \Delta z_2.
\end{aligned} \right.
\end{equation*}
Then, the adjoint equations are:
\begin{equation} \label{rfd}
\left\{ \begin{aligned}
(1 + (\gamma^\varepsilon)' \mid_{\theta^\varepsilon_1})
\sigma^{1,\varepsilon}- \mbox{div}(D_pH_1 \sigma^{1,\varepsilon}) 
- (\gamma^\varepsilon)' \mid_{\theta^\varepsilon_2} \sigma^{2,\varepsilon}
&= \varepsilon \Delta
\sigma^{1,\varepsilon}+ (2 - i) \delta_{x_0} \quad &\mbox{in}~ U,\vspace{.05in}\\
(1 + (\gamma^\varepsilon)' \mid_{\theta^\varepsilon_2})
\sigma^{2,\varepsilon}- \mbox{div}(D_pH_2 \sigma^{2,\varepsilon}) 
- (\gamma^\varepsilon)' \mid_{\theta^\varepsilon_1} \sigma^{1,\varepsilon}
&= \varepsilon \Delta
\sigma^{2,\varepsilon}+ (i - 1) \delta_{x_0} \quad &\mbox{in}~ U,
\end{aligned} \right.
\end{equation}
with boundary conditions
\begin{equation*}
\left\{ \begin{aligned}
\sigma^{1,\varepsilon}&=0 \qquad \quad&\mbox{on}~\partial U, \vspace{.05in}\\
\sigma^{2,\varepsilon}&=0 \qquad \quad&\mbox{on}~\partial U ,\\
\end{aligned} \right.
\end{equation*}
where $i \in \{  1, 2 \} $ and $x_0 \in U$ will be chosen later.
By repeating what was done in Section \ref{sectobs}, 
we get the existence and uniqueness of $\sigma^{1,\varepsilon}$ and $\sigma^{2,\varepsilon}$ by Fredholm alternative. 
Furthermore, $\sigma^{1,\varepsilon}$ and $\sigma^{2,\varepsilon}$ are well defined
and $\sigma^{1,\varepsilon}, \sigma^{2,\varepsilon} \in C^{\infty} (U \setminus \{ x_0 \})$.
In order to derive further properties of $\sigma^{1,\varepsilon}$ and $\sigma^{2,\varepsilon}$, 
we need the following useful formulas.
\begin{Lemma}
For every $\varphi_1, \varphi_2 \in C^2 (\overline{U})$ we have
\begin{equation} \label{gbh} \begin{split}
(2 - i) \varphi_1 (x_0) 
&= - \varepsilon \int_{\partial U} \frac{\partial \sigma^{1,\varepsilon} }{\partial \nu} \varphi_1 \, dS
- \int_U (\gamma^\varepsilon)' \mid_{\theta^\varepsilon_2} \varphi_1 \, \sigma^{2,\varepsilon} \, dx \\
& + \int_U \left[ (1 + (\gamma^\varepsilon)' \mid_{\theta^\varepsilon_1}) \, \varphi_1
+ D_pH_1 \cdot D \varphi_1 - \varepsilon \Delta \varphi_1 \right] \, \sigma^{1,\varepsilon} \, dx,
\end{split} \end{equation}
and
\begin{equation} \label{gbh2} \begin{split}
( i - 1 ) \varphi_2 (x_0) 
&= - \varepsilon \int_{\partial U} \frac{\partial \sigma^{2,\varepsilon} }{\partial \nu} \varphi_2 \, dS 
- \int_U (\gamma^\varepsilon)' \mid_{\theta^\varepsilon_1} \varphi_2 \, \sigma^{1,\varepsilon} \, dx \\
& + \int_U \left[ (1 + (\gamma^\varepsilon)' \mid_{\theta^\varepsilon_2}) \, \varphi_2
+ D_pH_2 \cdot D \varphi_2 - \varepsilon \Delta \varphi_2 \right] \, \sigma^{2,\varepsilon} \, dx,
\end{split} \end{equation}
where $\nu$ is the outer unit normal to $\partial U$.
\end{Lemma}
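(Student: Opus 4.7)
The plan is to derive both identities by multiplying each of the adjoint equations in \eqref{rfd} by the corresponding test function $\varphi_j$ and integrating by parts, exploiting the boundary condition $\sigma^{j,\varepsilon}=0$ on $\partial U$ to kill most of the boundary contributions. The Dirac mass on the right-hand side of \eqref{rfd} is exactly what produces the pointwise values $(2-i)\varphi_1(x_0)$ and $(i-1)\varphi_2(x_0)$ in \eqref{gbh} and \eqref{gbh2}. Since $\sigma^{j,\varepsilon} \in C^\infty(U\setminus\{x_0\})$ and the singularity at $x_0$ is of Dirac type, these integrations by parts are justified in the distributional sense.

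Concretely, to obtain \eqref{gbh}, I would multiply the first equation in \eqref{rfd} by $\varphi_1$ and integrate over $U$. The term $-\int_U \varphi_1 \,\mbox{div}(D_pH_1\,\sigma^{1,\varepsilon})\,dx$ becomes, upon integrating by parts, $\int_U D_pH_1\cdot D\varphi_1\,\sigma^{1,\varepsilon}\,dx$, since the boundary term carries a factor of $\sigma^{1,\varepsilon}$ that vanishes on $\partial U$. The term $\varepsilon\int_U \varphi_1 \Delta\sigma^{1,\varepsilon}\,dx$, integrated by parts twice, gives $\varepsilon\int_U \Delta\varphi_1\,\sigma^{1,\varepsilon}\,dx + \varepsilon\int_{\partial U}\varphi_1 \frac{\partial\sigma^{1,\varepsilon}}{\partial\nu}\,dS$ (the other boundary contribution, involving $\sigma^{1,\varepsilon}\frac{\partial\varphi_1}{\partial\nu}$, again vanishes). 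Pairing the Dirac mass against $\varphi_1$ produces $(2-i)\varphi_1(x_0)$. Collecting and rearranging these identities yields \eqref{gbh}. The derivation of \eqref{gbh2} is entirely analogous, using the second equation in \eqref{rfd} and the test function $\varphi_2$.

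There is no serious obstacle: this is a routine computation whose sole purpose is to record a pair of useful integration-by-parts identities that will be applied later (with various choices of $\varphi_1,\varphi_2$) to extract estimates on $\sigma^{1,\varepsilon},\sigma^{2,\varepsilon}$ analogous to those in Lemma \ref{obs_lem2} and Lemma \ref{kiz}. The only mild technical point is to make sense of the boundary/interior integrals near $x_0$, where $\sigma^{j,\varepsilon}$ may be singular; but this is handled by the standard argument of removing a small ball $B_r(x_0)$, integrating by parts on $U\setminus \overline{B_r(x_0)}$, and passing to the limit as $r\to 0^+$, in which limit the contribution on $\partial B_r(x_0)$ converges to the Dirac pairing $(2-i)\varphi_1(x_0)$ (respectively $(i-1)\varphi_2(x_0)$) and the other boundary integrals vanish.
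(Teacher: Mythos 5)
Your proposal is correct and takes exactly the route the paper uses: multiply each adjoint equation in \eqref{rfd} by the corresponding test function $\varphi_j$, integrate by parts, and use $\sigma^{j,\varepsilon}\!\mid_{\partial U}=0$ to eliminate the unwanted boundary terms, while the Dirac mass supplies $(2-i)\varphi_1(x_0)$ and $(i-1)\varphi_2(x_0)$. The paper's own proof is one line, and your expanded computation (including the remark about removing a small ball around $x_0$ to justify the integrations by parts) is a faithful elaboration of it.
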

\begin{proof}
The conclusion follows by simply multiplying by $\varphi_j$ $(j=1,2)$ 
the two equations in \eqref{rfd} and integrating by parts.
\end{proof}
{From the previous lemma, the analogous of Lemma \ref{obs_lem2} follows}.  
\begin{Lemma}[Properties of $\sigma^{1,\varepsilon}, \sigma^{2,\varepsilon}$] \label{kiUJ}
Let $\nu$ be the outer unit normal to $\partial U$. Then
\begin{itemize}
\item[(i)] $\sigma^{j,\varepsilon} \ge 0$ on $\overline{U}$.  
In particular, $\dfrac{\partial \sigma^{j,\varepsilon}}{\partial \nu} \le 0$ on $\partial U \,\, (j=1,2)$.
\item[(ii)] The following equality holds: 
\begin{equation}
\sum_{j=1}^2 \left( \int_U  \sigma^{j,\varepsilon} \,dx 
- \varepsilon \int_{\partial U} \dfrac{\partial \sigma^{j,\varepsilon}}{\partial \nu} \,dS \right) = 1 .
\notag
\end{equation}
In particular,
$$
\sum_{j=1}^2 \int_U  \sigma^{j,\varepsilon} \,dx  \le 1.
$$
\end{itemize}
\end{Lemma}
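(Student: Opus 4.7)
The plan is to mimic the strategy used for Lemma \ref{obs_lem2} and Lemma \ref{kiz}, passing to a further adjoint (which is the original linearized system) and then exploiting the maximum principle on that coupled system.

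For (i), fix $f_1, f_2 \in C(U)$ with $f_1, f_2 \geq 0$, and let $(z_1, z_2)$ solve the coupled linear system
\begin{equation*}
\left\{ \begin{aligned}
z_1 + D_p H_1 (x,Du_1^\varepsilon) \cdot D z_1
+  (\gamma^\varepsilon)' \mid_{\theta^\varepsilon_1} (z_1 - z_2)
- \varepsilon \Delta z_1 &= f_1,  \vspace{.05in}\\
z_2 + D_p H_2 (x,Du_2^\varepsilon) \cdot D z_2
+  (\gamma^\varepsilon)' \mid_{\theta^\varepsilon_2} (z_2 - z_1)
- \varepsilon \Delta z_2 &= f_2,
\end{aligned} \right.
\end{equation*}
with $z_1 = z_2 = 0$ on $\partial U$. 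I would argue that $z_1, z_2 \geq 0$ by the maximum principle. Indeed, let $\bar x \in \overline{U}$ achieve $\min_{j, x} z_j(x)$; WLOG $z_1(\bar x) = \min_{j,x} z_j(x)$, and we may assume $\bar x \in U$ (else both $z_j \geq 0$ trivially). At $\bar x$ we have $D z_1(\bar x) = 0$, $\Delta z_1(\bar x) \geq 0$, and $z_1(\bar x) - z_2(\bar x) \leq 0$; since $(\gamma^\varepsilon)' \geq 0$, the coupling term is $\leq 0$, so the first equation gives $z_1(\bar x) \geq f_1(\bar x) \geq 0$. Multiplying the two equations of \eqref{rfd} by $z_1, z_2$ respectively, integrating by parts and adding yields
\begin{equation*}
\int_U f_1 \sigma^{1,\varepsilon} \, dx + \int_U f_2 \sigma^{2,\varepsilon} \, dx = (2-i) z_1(x_0) + (i-1) z_2(x_0) \geq 0.
\end{equation*}
Letting $f_1, f_2$ range over arbitrary non-negative test functions forces $\sigma^{1,\varepsilon}, \sigma^{2,\varepsilon} \geq 0$ in $\overline{U}$. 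The boundary inequality $\partial \sigma^{j,\varepsilon}/\partial \nu \leq 0$ on $\partial U$ is then immediate, because $\sigma^{j,\varepsilon} \geq 0$ in $\overline{U}$ and vanishes on $\partial U$.

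For (ii), I would simply plug $\varphi_1 \equiv \varphi_2 \equiv 1$ into the two integration-by-parts identities \eqref{gbh} and \eqref{gbh2}. This gives
\begin{equation*}
(2-i) = \int_U \sigma^{1,\varepsilon} \, dx + \int_U (\gamma^\varepsilon)'\mid_{\theta^\varepsilon_1} \sigma^{1,\varepsilon} \, dx - \int_U (\gamma^\varepsilon)'\mid_{\theta^\varepsilon_2} \sigma^{2,\varepsilon} \, dx - \varepsilon \int_{\partial U} \frac{\partial \sigma^{1,\varepsilon}}{\partial \nu} \, dS,
\end{equation*}
and the analogous identity with indices swapped for $\sigma^{2,\varepsilon}$. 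Adding the two identities the two $(\gamma^\varepsilon)'$-terms cancel in pairs and one obtains
\begin{equation*}
1 = \sum_{j=1}^{2} \left( \int_U \sigma^{j,\varepsilon} \, dx - \varepsilon \int_{\partial U} \frac{\partial \sigma^{j,\varepsilon}}{\partial \nu} \, dS \right).
\end{equation*}
Combined with part (i), which gives $\partial \sigma^{j,\varepsilon}/\partial \nu \leq 0$ on $\partial U$, this yields $\sum_j \int_U \sigma^{j,\varepsilon} \, dx \leq 1$.

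The only real subtlety is the maximum principle step in (i): one must verify that the coupling term $(\gamma^\varepsilon)' \mid_{\theta^\varepsilon_j}(z_j - z_{3-j})$ has a favorable sign at a joint minimum of the pair $(z_1, z_2)$. The monotonicity $(\gamma^\varepsilon)' \geq 0$ together with looking at the minimum of the pair (rather than at the minimum of each function separately) handles this cleanly, and the rest is a matter of unwinding the adjoint identities.
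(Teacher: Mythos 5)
Your proposal is correct and follows essentially the approach the paper intends (the paper omits the explicit proof, only saying it follows from the integration-by-parts identities \eqref{gbh}--\eqref{gbh2} in analogy with Lemma \ref{obs_lem2} and Lemma \ref{kiz}). Your treatment of part (i) via the coupled maximum principle at a joint minimum of $(z_1,z_2)$, using $(\gamma^\varepsilon)' \geq 0$ to control the sign of the coupling term, and your derivation of part (ii) by plugging $\varphi_1 = \varphi_2 = 1$ into \eqref{gbh} and \eqref{gbh2} and observing the cancellation of the $(\gamma^\varepsilon)'$-terms, are exactly the right reconstruction.
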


We are now able to prove a uniform bound on $u^{\varepsilon}_1$ and $u^{\varepsilon}_2$.
{The proof is skipped, since it is analogous to those of the previous sections.}
\begin{Lemma} \label{bd}
There exists a positive constant $C$, independent of $\varepsilon$, such that
\begin{equation*}
\| u^{\varepsilon}_1 \|_{L^{\infty}},  \| u_2^{\varepsilon} \|_{L^{\infty}} \leq C.
\end{equation*}
\end{Lemma}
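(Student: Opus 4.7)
The strategy is to work directly with the maximum principle applied to the joint extrema of the pair $(u_1^\varepsilon, u_2^\varepsilon)$, exploiting the sign of $\gamma^\varepsilon$ for the upper bound and the strict positivity of the obstacles \eqref{strpos} for the lower bound.

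For the upper bound, I would consider
\begin{equation*}
M^\varepsilon := \max_{\substack{j=1,2 \\ x \in \overline U}} u_j^\varepsilon(x).
\end{equation*}
If the maximum is attained on $\partial U$ then $M^\varepsilon=0$ and we are done. Otherwise, up to relabelling, there exists $\bar x \in U$ with $u_1^\varepsilon(\bar x)=M^\varepsilon$, so that $Du_1^\varepsilon(\bar x)=0$ and $\Delta u_1^\varepsilon(\bar x)\le 0$. Evaluating \eqref{tg}$_1$ at $\bar x$ and using that $\gamma^\varepsilon\ge 0$ gives
\begin{equation*}
u_1^\varepsilon(\bar x) \;=\; \varepsilon\Delta u_1^\varepsilon(\bar x) - H_1(\bar x,0) - \gamma^\varepsilon(\theta_1^\varepsilon(\bar x))
\;\le\; -H_1(\bar x,0) \;\le\; \max_{x\in \overline U}\bigl(-H_1(x,0)\bigr),
\end{equation*}
which is bounded independently of $\varepsilon$.

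The more delicate step is the lower bound, and this is where the assumption \eqref{strpos} becomes essential. Set
\begin{equation*}
m^\varepsilon := \min_{\substack{j=1,2 \\ x \in \overline U}} u_j^\varepsilon(x).
\end{equation*}
Again, if the minimum lies on $\partial U$ we have $m^\varepsilon=0$. Otherwise, up to relabelling, there is $\hat x \in U$ with $u_1^\varepsilon(\hat x) = m^\varepsilon \le u_2^\varepsilon(\hat x)$. The key point is that at $\hat x$,
\begin{equation*}
\theta_1^\varepsilon(\hat x) \;=\; u_1^\varepsilon(\hat x) - u_2^\varepsilon(\hat x) - \psi_1(\hat x) \;\le\; -\psi_1(\hat x) \;\le\; -\alpha \;<\; 0,
\end{equation*}
so by the definition of $\gamma^\varepsilon$ we have $\gamma^\varepsilon(\theta_1^\varepsilon(\hat x))=0$. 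Combined with $Du_1^\varepsilon(\hat x)=0$ and $\Delta u_1^\varepsilon(\hat x)\ge 0$, equation \eqref{tg}$_1$ yields
\begin{equation*}
u_1^\varepsilon(\hat x) \;=\; \varepsilon\Delta u_1^\varepsilon(\hat x) - H_1(\hat x,0) \;\ge\; -H_1(\hat x,0) \;\ge\; \min_{x\in \overline U}\bigl(-H_1(x,0)\bigr),
\end{equation*}
which provides the required uniform lower bound.

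The main obstacle is the lower bound, since the penalization term $\gamma^\varepsilon$ blows up as $\varepsilon\to 0$ on the set where $\theta_j^\varepsilon > 0$; it is precisely the strict positivity $\psi_j \ge \alpha>0$ of the obstacles that forces $\theta_1^\varepsilon$ (respectively $\theta_2^\varepsilon$) to be strictly negative at a joint minimum and hence makes the penalty harmlessly vanish there. An alternative would be to compare $(u_1^\varepsilon,u_2^\varepsilon)$ against the subsolution $(\Phi_1,\Phi_2)$ from (H\ref{kilo}.4), using that $\Phi_1-\Phi_2-\psi_1\le 0$ and $\Phi_2-\Phi_1-\psi_2\le 0$ kill the penalty term in the system \eqref{tg}; the weakly coupled comparison principle of \cite{EL,IK1} would then give $u_j^\varepsilon \ge \Phi_j$ and the lower bound follows, but the maximum-principle argument above is the shortest route.
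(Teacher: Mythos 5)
Your proof is correct. The upper bound is the verbatim translation of the argument for Proposition~\ref{obs1}, using only $\gamma^\varepsilon \geq 0$ and $\Delta u_1^\varepsilon(\bar x) \leq 0$ at an interior maximum.

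The lower bound is where you've done genuine work. The paper dismisses the proof as ``analogous to those of the previous sections'', but in Section~\ref{sectobs} the lower bound rests on Lemma~\ref{obs_lem4}, whose Section~\ref{kilo} analogue ($\max_j \gamma^\varepsilon(\theta^\varepsilon_j) \leq C$) is proved only at the very end of the section and, as the paper itself emphasizes in the introduction, cannot be obtained directly here without the adjoint machinery. So a literal translation of the Section~\ref{sectobs} argument does not close. You correctly identify the observation that repairs this: the strict positivity \eqref{strpos} of the obstacles forces the penalty to vanish identically at a joint interior minimum, since $u_1^\varepsilon(\hat x) \leq u_2^\varepsilon(\hat x)$ gives $\theta_1^\varepsilon(\hat x) \leq -\psi_1(\hat x) \leq -\alpha < 0$ and hence $\gamma^\varepsilon(\theta_1^\varepsilon(\hat x)) = 0$. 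This is precisely why \eqref{strpos}, which has no counterpart in Section~\ref{sectobs}, appears as a standing hypothesis. Your alternative route via the subsolution $(\Phi_1,\Phi_2)$ of (H\ref{kilo}.4) and the weakly coupled comparison principle is also valid (and is indeed what the paper uses in Step~I of Lemma~\ref{boundgrad}, where $u^\varepsilon_j \geq \Phi_j$ is invoked); it buys a two-sided barrier but requires citing the comparison result of \cite{EL,IK1} for the penalized system, whereas your primary argument is self-contained.
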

Next lemma will be used to give a uniform bound for $D u^{\varepsilon}_1$ and $D u^{\varepsilon}_2$.
\begin{Lemma} \label{gamprimeb}
We have 
\begin{equation*}
\int_U ( \gamma^\varepsilon )'  \mid_{\theta^1_{\varepsilon}} \, \sigma^{1,\varepsilon} \, dx
+ \int_U ( \gamma^\varepsilon )'  \mid_{\theta^2_{\varepsilon}} \, \sigma^{2,\varepsilon} \, dx
\leq C,
\end{equation*}
where $C$ is a positive constant independent of $\varepsilon$.
\end{Lemma}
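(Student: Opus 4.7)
The strategy is to apply the general identities \eqref{gbh} and \eqref{gbh2} with the test functions $\varphi_1 = u^\varepsilon_1$ and $\varphi_2 = u^\varepsilon_2$, then add the two resulting relations and exploit (i) the convexity of $H_1, H_2$ in $p$, (ii) the convexity of $\gamma$, and (iii) the strict positivity $\psi_1, \psi_2 \geq \alpha > 0$. Since $u^\varepsilon_j = 0$ on $\partial U$, all boundary integrals will vanish.

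Using the penalized system \eqref{tg} to eliminate the Laplacian terms, the bracket inside \eqref{gbh} (with $\varphi_1 = u^\varepsilon_1$) simplifies to
$$
(\gamma^\varepsilon)'\!\mid_{\theta^\varepsilon_1}\! u^\varepsilon_1 \;-\; \gamma^\varepsilon\!\mid_{\theta^\varepsilon_1} \;+\; \bigl( D_p H_1 \cdot D u^\varepsilon_1 - H_1 \bigr),
$$
and similarly for \eqref{gbh2}. Adding the two identities, the coupling contributions $-(\gamma^\varepsilon)'\!\mid_{\theta^\varepsilon_{3-j}} u^\varepsilon_j \, \sigma^{3-j,\varepsilon}$ merge with the diagonal ones to produce the cross-integrals
$$
\int_U (\gamma^\varepsilon)'\!\mid_{\theta^\varepsilon_1} (u^\varepsilon_1 - u^\varepsilon_2) \, \sigma^{1,\varepsilon} \, dx \;+\; \int_U (\gamma^\varepsilon)'\!\mid_{\theta^\varepsilon_2} (u^\varepsilon_2 - u^\varepsilon_1) \, \sigma^{2,\varepsilon} \, dx,
$$
which is precisely the structure I want to exploit, because the difference $u^\varepsilon_j - u^\varepsilon_{3-j}$ is controlled below by $\alpha$ on the support of $(\gamma^\varepsilon)'\!\mid_{\theta^\varepsilon_j}$.

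On the left-hand side one finds $(2-i) u^\varepsilon_1(x_0) + (i-1) u^\varepsilon_2(x_0)$, bounded by a constant thanks to Lemma \ref{bd}. On the right-hand side, convexity of $H_j$ in $p$ gives the Legendre-type inequality $D_p H_j \cdot p - H_j(x,p) \geq -H_j(x,0) \geq -C$, so combined with property (ii) of Lemma \ref{kiUJ} the Hamiltonian contributions are at worst $-C$. Writing $u^\varepsilon_1 - u^\varepsilon_2 = \theta^\varepsilon_1 + \psi_1$ and using convexity of $\gamma$ in the form $(\gamma^\varepsilon)'(s) \cdot s \geq \gamma^\varepsilon(s)$ for every $s \in \mathbb{R}$ (trivial for $s \leq 0$, and a consequence of $\gamma(0) = 0$ plus convexity for $s > 0$), one obtains
$$
(\gamma^\varepsilon)'\!\mid_{\theta^\varepsilon_1} (u^\varepsilon_1 - u^\varepsilon_2) - \gamma^\varepsilon\!\mid_{\theta^\varepsilon_1} \;\geq\; (\gamma^\varepsilon)'\!\mid_{\theta^\varepsilon_1} \psi_1 \;\geq\; \alpha \, (\gamma^\varepsilon)'\!\mid_{\theta^\varepsilon_1},
$$
and analogously for the $\theta^\varepsilon_2$ term. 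Putting these estimates together produces
$$
\alpha \Bigl( \int_U (\gamma^\varepsilon)'\!\mid_{\theta^\varepsilon_1} \sigma^{1,\varepsilon} \, dx + \int_U (\gamma^\varepsilon)'\!\mid_{\theta^\varepsilon_2} \sigma^{2,\varepsilon} \, dx \Bigr) \leq C,
$$
which is the lemma.

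The main obstacle the plan must confront is that, at this stage of the paper, no a priori bound on $Du^\varepsilon_j$ is available, so the terms $D_p H_j(x, Du^\varepsilon_j) \cdot D u^\varepsilon_j$ created by integration by parts cannot be bounded pointwise; convexity of $H_j$ (assumption (H\ref{kilo}.1)) is decisive precisely because it replaces such a pointwise bound by the one-sided Legendre estimate above. The second delicate point is recovering a positive multiple of $(\gamma^\varepsilon)'$ on the right-hand side, and this is where the strict positivity $\psi_j \geq \alpha > 0$, together with the convexity of $\gamma$ assumed specifically in this section, is indispensable.
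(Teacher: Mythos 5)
Your proposal is correct and follows essentially the same route as the paper: the paper also rewrites the penalized equation \eqref{tg} so as to isolate the combination $H_j - D_pH_j\cdot Du^\varepsilon_j$ and $\gamma^\varepsilon - (\gamma^\varepsilon)'\theta^\varepsilon_j$, multiplies by $\sigma^{j,\varepsilon}$, integrates, and then invokes \eqref{gbh}--\eqref{gbh2}, the convexity inequalities \eqref{conv12} and \eqref{convgam}, the strict lower bound $\psi_j\geq\alpha$, Lemma \ref{kiUJ}, and Lemma \ref{bd} --- exactly the ingredients you identify. Choosing $\varphi_j=u^\varepsilon_j$ in \eqref{gbh}, \eqref{gbh2} and simplifying the bracket via \eqref{tg} is a slightly more streamlined packaging of the same algebra, but the argument is the same.
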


\begin{proof}
First of all, observe that condition (H\ref{kilo}.1) implies that
\begin{equation} \label{conv12}
H_j (x,p) - D_p H_j (x,p) \cdot p \leq H_j (x,0), \qquad \text{ for every }(x,p) \in \overline{U} \times \mathbb{R}^n, \qquad j=1,2.
\end{equation}
In the same way, the convexity of $\gamma$ implies
\begin{equation} \label{convgam}
 \gamma^{\varepsilon} (s) - \left[ (\gamma^{\varepsilon})' (s) \right] s 
= \gamma \left( \frac{s}{\varepsilon} \right) - 
\left[ (\gamma') \left( \frac{s}{\varepsilon} \right) \right] \frac{s}{\varepsilon} \leq \gamma (0) = 0.
\end{equation}
Equation \eqref{tg}$_1$ gives
\begin{equation*} \begin{split}
0&=u^\varepsilon_1 + H_1 (x,Du_1^\varepsilon) 
+\gamma^\varepsilon \mid_{\theta^1_{\varepsilon}} - \varepsilon \Delta u_1^\varepsilon \\
&= u^\varepsilon_1 + D_p H_1 (x,Du^\varepsilon_1) \cdot D u^{\varepsilon}_1
- \varepsilon \Delta u^\varepsilon_1 +H_1 (x,Du^\varepsilon_1) - D_p H_1 (x,Du^\varepsilon_1) \cdot D u^{\varepsilon}_1 \\
& \hspace{.3cm}  +\gamma^\varepsilon \mid_{\theta^1_{\varepsilon}} 
- ( \gamma^\varepsilon )' \mid_{\theta^1_{\varepsilon}} \theta^1_{\varepsilon}
+ ( \gamma^\varepsilon )' \mid_{\theta^1_{\varepsilon}} (u^{\varepsilon}_1 - u_2^{\varepsilon}) - ( \gamma^\varepsilon )'  \mid_{\theta^1_{\varepsilon}} \psi_1.
\end{split} \end{equation*}
Multiplying last relation by $\sigma^{1,\varepsilon}$, integrating and using \eqref{conv12} and \eqref{convgam}
\begin{equation*} \begin{split}
&\int_U ( \gamma^\varepsilon )'  \mid_{\theta^1_{\varepsilon}} \psi_1 \, \sigma^{1,\varepsilon} \, dx
= \int_U \left[ H_1 (x,Du^\varepsilon_1) - D_p H_1 (x,Du^\varepsilon_1) \cdot D u^{\varepsilon}_1 \right] \, \sigma^{1,\varepsilon} \, dx \\
&+ \int_U \left[ \gamma^\varepsilon \mid_{\theta^1_{\varepsilon}} 
- ( \gamma^\varepsilon )' \mid_{\theta^1_{\varepsilon}} \theta^1_{\varepsilon} 
\right] \, \sigma^{1,\varepsilon} \, dx \\
&+ \int_U \left[ \left( 1 + ( \gamma^\varepsilon )' \mid_{\theta^1_{\varepsilon}} \right) u^\varepsilon_1 
+ D_p H_1 (x,Du^\varepsilon_1) \cdot D u^{\varepsilon}_1
- \varepsilon \Delta u^\varepsilon_1 
- ( \gamma^\varepsilon )' \mid_{\theta^1_{\varepsilon}} u_2^{\varepsilon}
\right] \, \sigma^{1,\varepsilon} \, dx \\
&\leq \int_U  H_1 (x,0) \, \sigma^{1,\varepsilon} \, dx \\
&+\int_U \left[ \left( 1 + ( \gamma^\varepsilon )' \mid_{\theta^1_{\varepsilon}} \right) u^\varepsilon_1
 + D_p H_1 (x,Du^\varepsilon_1) \cdot D u^{\varepsilon}_1
- \varepsilon \Delta u^\varepsilon_1 - ( \gamma^\varepsilon )' \mid_{\theta^1_{\varepsilon}} u_2^{\varepsilon}
\right] \, \sigma^{1,\varepsilon} \, dx.
\end{split} \end{equation*}
Analogously, 
\begin{equation*} \begin{split}
&\int_U ( \gamma^\varepsilon )'  \mid_{\theta^2_{\varepsilon}} \psi_2 \, \sigma^{2,\varepsilon} \, dx 
\leq \int_U H_2 (x,0)  \,
\sigma^{2,\varepsilon} \, dx \\
&\hspace{1cm}+ \int_U \left[ \left( 1 + ( \gamma^\varepsilon )' \mid_{\theta^2_{\varepsilon}} \right)
u_2^\varepsilon 
+ D_p H_2 (x,Du_2^\varepsilon) \cdot D u_2^{\varepsilon}
- \varepsilon \Delta u_2^\varepsilon - ( \gamma^\varepsilon )' \mid_{\theta^2_{\varepsilon}} u_2^{\varepsilon}
\right] \, \sigma^{2,\varepsilon} \, dx.
\end{split} \end{equation*}
Summing up the last two relations and using \eqref{gbh} and \eqref{gbh2}
\begin{equation*} \begin{split}
& \int_U ( \gamma^\varepsilon )'  \mid_{\theta^1_{\varepsilon}} \psi_1 \, \sigma^{1,\varepsilon} \, dx
+ \int_U ( \gamma^\varepsilon )'  \mid_{\theta^2_{\varepsilon}} \psi_2 \, \sigma^{2,\varepsilon} \, dx 
\leq (2 - i) u^{\varepsilon}_1 (x_0) + (i-1) u^{\varepsilon}_2 (x_0) \\
&\hspace{.3cm}+ \| H_1 (\cdot ,0)\|_{L^{\infty}} \int_U  \, \sigma^{1,\varepsilon} \, dx
+ \| H_2 (\cdot ,0)\|_{L^{\infty}} \int_U  \, \sigma^{2,\varepsilon} \, dx.
\end{split} \end{equation*}
Thus,
\begin{equation*} \begin{split}
& \int_U ( \gamma^\varepsilon )'  \mid_{\theta^1_{\varepsilon}} \, \sigma^{1,\varepsilon} \, dx
+ \int_U ( \gamma^\varepsilon )'  \mid_{\theta^2_{\varepsilon}} \, \sigma^{2,\varepsilon} \, dx 
\leq \frac{2-i}{\alpha} u_1^{\varepsilon} (x_0) + \frac{i-1}{\alpha} u_2^{\varepsilon} (x_0) \\
&\hspace{.3cm}+ \frac{ \| H_1 (\cdot ,0)\|_{L^{\infty}} }{\alpha} \int_U  \, \sigma^{1,\varepsilon} \, dx
+ \frac{ \| H_2 (\cdot ,0)\|_{L^{\infty}} }{\alpha} \int_U  \, \sigma^{2,\varepsilon} \, dx \leq C ,
\end{split} \end{equation*}
where we used \eqref{strpos}, Lemma \ref{kiUJ} and Lemma \ref{bd}.
\end{proof}
We can finally show the existence of a uniform bound for the gradients 
of $u^{\varepsilon}_1$ and $u^{\varepsilon}_2$.
\begin{Lemma} \label{boundgrad}
There exists a positive constant $C$, independent of $\varepsilon$, such that
\begin{equation*}
\| D u^{\varepsilon}_1 \|_{L^{\infty}}, \, \, \| D u^{\varepsilon}_2 \|_{L^{\infty}} \leq C.
\end{equation*}
\end{Lemma}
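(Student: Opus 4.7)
The plan is to establish Lemma \ref{boundgrad} in two stages, mimicking the pattern of Proposition \ref{obs1} and Lemma \ref{lemu} but with Lemma \ref{gamprimeb} as an essential extra ingredient for handling the coupling. Stage one is a boundary gradient bound via barriers; stage two is an interior Bernstein-type estimate that must be closed through the adjoint formulas \eqref{gbh}--\eqref{gbh2}. For the boundary bound, (H\ref{kilo}.4) ensures that $(\Phi_1,\Phi_2)$ is a sub-solution of \eqref{tg} for small $\varepsilon$, since $\Phi_j-\Phi_{3-j}-\psi_j\leq 0$ makes the $\gamma^\varepsilon$ terms vanish. A comparison argument tailored to the system—examining the component and point where $\max_j(\Phi_j-u_j^\varepsilon)_+$ would be positive and using monotonicity of $\gamma^\varepsilon$ together with the inequality $\Phi_{j_0}-\Phi_{3-j_0}\geq u_{j_0}^\varepsilon-u_{3-j_0}^\varepsilon$ at any such maximiser—gives $u_j^\varepsilon\geq\Phi_j$. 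Setting $v:=\mu d$ with $d(x)=\mathrm{dist}(x,\partial U)$ and $\mu$ large, the pair $(v,v)$ is a super-solution of \eqref{tg} in a boundary strip $U_\delta$ by (H\ref{kilo}.2), Lemma \ref{bd}, and the fact that $\gamma^\varepsilon(v-v-\psi_j)=\gamma^\varepsilon(-\psi_j)=0$. Comparison sandwiches $u_j^\varepsilon$ between $\Phi_j$ and $v$ in $U_\delta$, which together with $u_j^\varepsilon=\Phi_j=v=0$ on $\partial U$ produces $\|Du_j^\varepsilon\|_{L^\infty(\partial U)}\leq C$.

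For the interior bound, set $w_j^\varepsilon:=|Du_j^\varepsilon|^2/2$ and derive, by differentiating \eqref{tg} and contracting with $Du_j^\varepsilon$, the Bernstein identity
\begin{equation*}
|Du_j^\varepsilon|^2 + D_xH_j\cdot Du_j^\varepsilon + D_pH_j\cdot Dw_j^\varepsilon + (\gamma^\varepsilon)'\mid_{\theta_j^\varepsilon}Du_j^\varepsilon\cdot D\theta_j^\varepsilon = \varepsilon\Delta w_j^\varepsilon - \varepsilon|D^2 u_j^\varepsilon|^2 .
\end{equation*}
Let $(j_0,x_0)$ realise $M:=\max_{j,x}w_j^\varepsilon$; if $x_0\in\partial U$, stage one concludes, so take $x_0\in U$ and WLOG $j_0=1$. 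Then $Dw_1^\varepsilon(x_0)=0$, $\Delta w_1^\varepsilon(x_0)\leq 0$, and the choice of $j_0$ gives $|Du_1^\varepsilon(x_0)|\geq|Du_2^\varepsilon(x_0)|$, so Cauchy--Schwarz yields $Du_1^\varepsilon\cdot Du_2^\varepsilon\leq|Du_1^\varepsilon|^2$ at $x_0$. Combined with the bound on $D_xH_1$ from (H\ref{kilo}.3), the Bernstein inequality at $x_0$ reduces to
\begin{equation*}
|Du_1^\varepsilon(x_0)|^2 \leq C\,|Du_1^\varepsilon(x_0)|\bigl(1+(\gamma^\varepsilon)'\mid_{\theta_1^\varepsilon(x_0)}\bigr) .
\end{equation*}

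The main obstacle is controlling $(\gamma^\varepsilon)'\mid_{\theta_1^\varepsilon(x_0)}$, which a priori is only bounded by $1/\varepsilon$; this is exactly the difficulty the authors emphasise in saying that the scalar Lemma \ref{obs_lem4} has no weakly-coupled analogue. My plan to close the estimate is to take $x_0$ as the base point of the adjoint system \eqref{rfd} (with $i=1$), and to apply the adjoint representation \eqref{gbh} with $\varphi_1=w_1^\varepsilon$ together with \eqref{gbh2} with $\varphi_2=w_2^\varepsilon$, rewriting the resulting integrands through the Bernstein identity. This produces a closed formula for $w_1^\varepsilon(x_0)$ in which the only potentially unbounded contributions are the weighted integrals $\int(\gamma^\varepsilon)'\mid_{\theta_j^\varepsilon}(\cdots)\,\sigma^{j,\varepsilon}\,dx$, and these are absorbed by Lemma \ref{gamprimeb} together with the non-negativity of $\sigma^{j,\varepsilon}$ from Lemma \ref{kiUJ}. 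The convexity of $H_j$ (H\ref{kilo}.1) enters through $D_pH_j\cdot Du_j^\varepsilon-H_j\leq -H_j(x,0)$—the inequality that drove the proof of Lemma \ref{gamprimeb}—while the assumed convexity of $\gamma$ makes $(\gamma^\varepsilon)'$ monotone, allowing the $L^1$-type control carried by $\sigma^{j,\varepsilon}$ to be converted into a uniform bound at the distinguished point $x_0$. The resulting estimate $M\leq C$ combined with stage one completes the proof.
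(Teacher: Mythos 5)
Your stage one (boundary barriers via (H\ref{kilo}.4), the sub-solution $(\Phi_1,\Phi_2)$, the super-solution $(\mu d,\mu d)$, and the sandwich) matches the paper exactly. Your stage two is also pointed in the right direction — drop the pointwise Bernstein bound at $x_0$, which is indeed a dead end, and switch to the adjoint representations \eqref{gbh}, \eqref{gbh2} with $i=1$ and $x_0=\widehat{x}$ (the maximiser), fed by the integrated Bernstein identity. But the mechanism you invoke to close the estimate is not quite right, and this hides the real difficulty.

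You say the only potentially unbounded contributions are weighted integrals $\int (\gamma^\varepsilon)'|_{\theta_j^\varepsilon}(\cdots)\,\sigma^{j,\varepsilon}\,dx$, ``absorbed by Lemma \ref{gamprimeb} together with the non-negativity of $\sigma^{j,\varepsilon}$.'' Lemma \ref{gamprimeb} only provides the first-order bound $\int (\gamma^\varepsilon)'|_{\theta_j^\varepsilon}\sigma^{j,\varepsilon}\,dx\le C$. After substituting the Bernstein identity into \eqref{gbh} and \eqref{gbh2}, the representations contain the cross-terms
\begin{equation*}
\int_U (\gamma^\varepsilon)'|_{\theta_1^\varepsilon}\, w_2^\varepsilon\, \sigma^{1,\varepsilon}\,dx
\qquad\text{and}\qquad
\int_U (\gamma^\varepsilon)'|_{\theta_2^\varepsilon}\, w_1^\varepsilon\, \sigma^{2,\varepsilon}\,dx,
\end{equation*}
whose weights $w_j^\varepsilon$ are comparable to the unknown maximum $M:=w_1^\varepsilon(\widehat{x})$. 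Bounding these via Lemma \ref{gamprimeb} gives only $C\cdot M$ with $C$ of order one, which yields $M\le CM + C$ and does not close. The step the paper actually uses — and the one missing from your write-up — is that these two cross-terms appear in \eqref{fc1} and \eqref{fc2} \emph{with opposite signs} (after the algebraic rewrite $|Du_1|^2-Du_1\cdot Du_2=\tfrac12(|Du_1|^2+|Du_1-Du_2|^2-|Du_2|^2)$ and dropping the sign-definite pieces with $\sigma^{j,\varepsilon}\ge 0$), so that upon \emph{summing} the identity from \eqref{gbh} with $\varphi_1=w_1^\varepsilon$ and the identity from \eqref{gbh2} with $\varphi_2=w_2^\varepsilon$ they cancel exactly. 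What survives are only $\gamma$-integrals with bounded weights ($D\psi_j$, $D_xH_j$), and these \emph{are} controlled by Lemma \ref{gamprimeb} after Young's inequality, producing $M\le 2\eta^2(C+1)M + C(1+\eta^{-2})$; taking $\eta$ small then closes the bound. Without naming this cancellation, your proposal's closing step does not actually go through.
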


\begin{proof}

\textbf{Step I: Bound on $\partial U$.}

As it was done in Section \ref{sectobs}, we are going to construct appropriate barriers.
For $\varepsilon$ small enough, assumption (H\ref{kilo}.4) implies that
\begin{equation*} 
\left\{ \begin{aligned}
\Phi_1 +H_1 (x,D\Phi_1 ) 
+\gamma^\varepsilon(\Phi_1 - \Phi_2 - \psi_1 ) 
< \varepsilon \Delta \Phi_1 \quad\mbox{in}~U, \vspace{.05in}\\
\Phi_2 +H_2 (x,D \Phi_2 ) 
+\gamma^\varepsilon(\Phi_2 - \Phi_1 - \psi_2 ) 
< \varepsilon \Delta \Phi_2 \quad\mbox{in}~U, \vspace{.05in}
\end{aligned} \right.
\end{equation*}
and $\Phi_1 = \Phi_2 =0$ on $\partial U$. 
Therefore, $(\Phi_1,\Phi_2)$ is a sub-solution of \eqref{tg}. 
By the comparison principle, $u^\varepsilon_j \ge \Phi_j$ in~$U$, $j=1,2$.

Let $d(x)$, $\delta$, and $U_{\delta}$ be as in the proof of Proposition \ref{obs1}. 
For $ \mu > 0 $ large enough, the uniform bounds of $\|u^\varepsilon_1\|_{L^\infty}$ and $\|u^\varepsilon_2\|_{L^\infty}$ yield
$v := \mu  d \ge u^\varepsilon_j$ on $\partial U_\delta $, $j=1,2$, so that
\begin{equation*} 
\left\{ \begin{aligned}
v +H_1(x,Dv)+\gamma^\varepsilon( v - v - \psi_1 ) - \varepsilon \Delta v 
= v+H_1(x,Dv) - \varepsilon \Delta v
\ge H_1(x,\mu Dd) - \mu C \quad\mbox{in}~U, \vspace{.05in}\\
v +H_2(x,Dv)+\gamma^\varepsilon( v - v - \psi_2 ) - \varepsilon \Delta v
=  v +H_2(x,Dv) - \varepsilon \Delta v
\ge H_2(x,\mu Dd) - \mu C \quad\mbox{in}~U. \vspace{.05in}\\
\end{aligned} \right.
\end{equation*}
Now, we have $\Phi_j= u^\varepsilon_j= v = 0$ on $\partial U$.
Also, thanks to assumption (H\ref{kilo}.2), for $\mu >0$ large enough
\begin{equation*} 
\left\{ \begin{aligned}
v +H_1(x,Dv)+\gamma^\varepsilon(v- v - \psi_1 ) - \varepsilon \Delta v \ge 0 \quad\mbox{in}~U, \vspace{.05in}\\
v +H_2(x,Dv)+\gamma^\varepsilon(v - v - \psi_2 ) - \varepsilon \Delta v \ge 0 \quad\mbox{in}~U, \vspace{.05in}\\
\end{aligned} \right.
\end{equation*}
that is, the pair $(v,v)$ is a super-solution for the system \eqref{tg}.
Thus, the comparison principle gives us that $\Phi_j \le u^\varepsilon_j \le v_j$ in $ U_\delta$.
Then, from the fact that $\Phi_j= u^\varepsilon_j= v = 0$ on $\partial U$ we get 
$$
\dfrac{\partial v}{\partial \nu}(x) \le \dfrac{\partial u^\varepsilon_j}{\partial \nu}(x) \le \dfrac{\partial \Phi_j}{\partial \nu}(x),\quad \text{for } x\in \partial U.
$$
Hence, we obtain $\| Du^{\varepsilon}_j \|_{L^{\infty}(\partial U)} \leq C$, $j=1,2$.

\textbf{Step II: Bound on $U$.}

Assume now that there exists $\widehat{x} \in U$ such that
\begin{equation*}
\max_{\substack{ j=1,2 \\  x \in \overline{U} }} w^{\varepsilon}_j (x) = w^{\varepsilon}_1 ( \widehat{x} ),
\hspace{1.5cm} \text{ where }  
w^{\varepsilon}_j (x):= \frac{1}{2} | D u^{\varepsilon}_j |^2, \quad j=1,2.
\end{equation*}
By a direct computation one can see that
\begin{equation*} 
2(1+(\gamma^\varepsilon) '  \mid_{\theta^{\varepsilon}_1})w^\varepsilon_1 + D_p H_1 \cdot D w^\varepsilon_1 
+ D_xH_1 \cdot Du^\varepsilon_1 -(\gamma^\varepsilon)' \mid_{\theta^{\varepsilon}_1} Du^\varepsilon_1 \cdot ( D\psi_1 +  D u^{\varepsilon}_2 )
 = \varepsilon \Delta w^\varepsilon_1 - \varepsilon |D^2 u^\varepsilon_1|^2.
\end{equation*}
Multiplying last relation by $\sigma^{1,\varepsilon}$ and integrating over $U$
\begin{equation} \label{obs25}
\begin{split}
&2 \int_U w^\varepsilon_1 \sigma^{1,\varepsilon} \, dx 
+ \int_U D_p H_1 \cdot D w^\varepsilon_1 \sigma^{1,\varepsilon} \, dx 
- \varepsilon \int_U \Delta w^\varepsilon_1\sigma^{1,\varepsilon} \, dx 
+ \int_U \varepsilon |D^2 u^\varepsilon_1|^2 \sigma^{1,\varepsilon} \, dx \\
& + \int_U D_x H_1 \cdot Du^\varepsilon_1 \sigma^{1,\varepsilon} \, dx
+ \frac{1}{2} \int_U (\gamma^\varepsilon)' \mid_{\theta^{\varepsilon}_1} 
\left[  | Du^\varepsilon_1 |^2 + | Du^\varepsilon_1 - Du^\varepsilon_2 |^2 
- | Du^\varepsilon_2 |^2 \right] \, \sigma^{1,\varepsilon} \, dx \\
&- \int_U (\gamma^\varepsilon)' \mid_{\theta^{\varepsilon}_1} Du^\varepsilon_1 \cdot D\psi_1 \sigma^{1,\varepsilon} \, dx
 =  0.
\end{split} \end{equation}
Then, using equation \eqref{gbh}$_1$ with $i=1$ and $x_0 = \widehat{x}$
\begin{equation}  \label{obs26} 
\begin{split}
&\int_U w^\varepsilon_1 \sigma^{1,\varepsilon} \, dx 
+ \int_U \varepsilon |D^2 u^\varepsilon_1|^2 \sigma^{1,\varepsilon} \, dx 
- \int_U (\gamma^\varepsilon)' \mid_{\theta^{\varepsilon}_1} Du^\varepsilon_1 \cdot D\psi_1 \sigma^{1,\varepsilon} \, dx \\
& + \int_U D_x H_1 \cdot Du^\varepsilon_1 \sigma^{1,\varepsilon} \, dx
+ \frac{1}{2} \int_U (\gamma^\varepsilon)' \mid_{\theta^{\varepsilon}_1} 
\left[  | Du^\varepsilon_1 - Du^\varepsilon_2 |^2 
- | Du^\varepsilon_2 |^2 \right] \, \sigma^{1,\varepsilon} \, dx \\
& w^\varepsilon_1 ( \widehat{x}) 
+ \varepsilon \int_{\partial U} \frac{\partial \sigma^{1,\varepsilon} }{\partial \nu} w^\varepsilon_1 \, dS
+ \int_U (\gamma^\varepsilon)' \mid_{\theta^\varepsilon_2} w^\varepsilon_1 \, \sigma^{2,\varepsilon} \, dx =  0,
\end{split} \end{equation}
which implies
\begin{equation*} 
\begin{split}
& w^\varepsilon_1 ( \widehat{x})  - \int_U (\gamma^\varepsilon)' \mid_{\theta^{\varepsilon}_1} w^\varepsilon_2 \, \sigma^{1,\varepsilon} \, dx  
+ \int_U (\gamma^\varepsilon)' \mid_{\theta^\varepsilon_2} w^\varepsilon_1 \, \sigma^{2,\varepsilon} \, dx\\
& \leq \int_U (\gamma^\varepsilon)' \mid_{\theta^{\varepsilon}_1} Du^\varepsilon_1 \cdot D\psi_1 \sigma^{1,\varepsilon} \, dx
- \int_U D_x H_1 \cdot Du^\varepsilon_1 \sigma^{1,\varepsilon} \, dx 
- \varepsilon \int_{\partial U} \frac{\partial \sigma^{1,\varepsilon} }{\partial \nu} w^\varepsilon_1 \, dS.
\end{split} \end{equation*}
Let now $\eta > 0$ be a constant to be chosen later.
Using Step I and Lemmas \ref{kiUJ} and \ref{gamprimeb}, thanks to Young's inequality
\begin{equation} \label{fc1} 
\begin{split}
& w^\varepsilon_1 ( \widehat{x} )  - \int_U (\gamma^\varepsilon)' \mid_{\theta^{\varepsilon}_1} w^\varepsilon_2 \, \sigma^{1,\varepsilon} \, dx  
+ \int_U (\gamma^\varepsilon)' \mid_{\theta^\varepsilon_2} w^\varepsilon_1 \, \sigma^{2,\varepsilon} \, dx\\
& \leq \int_U (\gamma^\varepsilon)' \mid_{\theta^{\varepsilon}_1} 
\left[  \eta^2 w^\varepsilon_1 (\widehat{x}) + \frac{\|D\psi_1\|^2_{L^{\infty}}}{2 \, \eta^2} 
\right] \sigma^{1,\varepsilon} \, dx  \\
&\hspace{.4cm} + \int_U \left[  \eta^2 w^\varepsilon_1 (\widehat{x}) + \frac{\|D_x H_1\|^2_{L^{\infty}}}{2 \, \eta^2} 
\right] \sigma^{1,\varepsilon} \, dx +C \\
&\leq \eta^2 (C + 1) w^\varepsilon_1 (\widehat{x}) + C \left( 1 + \frac{1}{\eta^2} \right).
\end{split} \end{equation}
In the same way, considering the analogous of equation \eqref{obs25} for the function $w^{\varepsilon}_2$
(recalling that in \eqref{gbh} we chose $i=1$) we can obtain the following inequality :
\begin{equation} \label{fc2}
- \int_U (\gamma^\varepsilon)' \mid_{\theta^{\varepsilon}_2} w^\varepsilon_1 \, \sigma^{2,\varepsilon} \, dx  
+ \int_U (\gamma^\varepsilon)' \mid_{\theta^\varepsilon_1} w^\varepsilon_2 \, \sigma^{1,\varepsilon} \, dx
\leq \eta^2 (C + 1) w^\varepsilon_1 (\widehat{x}) + C \left( 1 + \frac{1}{\eta^2} \right), 
\end{equation}
where we also used the fact that $\| w^{\varepsilon}_2 \|_{L^{\infty}} \leq w^\varepsilon_1 (\widehat{x})$.
Summing inequalities \eqref{fc1} and \eqref{fc2} and choosing $\eta >0$ small enough the conclusion follows.
\end{proof}
Next lemma gives a control of the Hessians $D^2 u^{\varepsilon}_1$ 
and  $D^2 u^{\varepsilon}_2$ in the support of $\sigma^{1,\varepsilon}$ and $\sigma^{2,\varepsilon}$
respectively.
\begin{Lemma} \label{dggt}
There exists a positive constant $C$, independent of $\varepsilon$, such that
\begin{equation*}
\sup_{j=1,2} \int_U \varepsilon |D^2 u^\varepsilon_j|^2 \sigma^{j,\varepsilon} \, dx \leq C.
\end{equation*}
\end{Lemma}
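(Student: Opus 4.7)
The plan is to reuse and extend the calculation already performed in the proof of Lemma~\ref{boundgrad}: namely, multiply the Bernstein identity for $w^{\varepsilon}_1 := |Du^\varepsilon_1|^2/2$ by $\sigma^{1,\varepsilon}$, integrate over $U$, integrate by parts (using $\sigma^{1,\varepsilon} \equiv 0$ on $\partial U$), and invoke the first equation of \eqref{rfd} for some choice of $i \in \{1,2\}$ and $x_0 \in U$. The terms $\varepsilon \int_U w^\varepsilon_1 \Delta \sigma^{1,\varepsilon}\,dx$ and $\varepsilon \int_U \Delta w^\varepsilon_1 \, \sigma^{1,\varepsilon}\,dx$ that arise from the two integrations by parts cancel against each other after substituting the adjoint PDE, leaving an identity of the schematic form
\begin{equation*}
\varepsilon \int_U |D^2 u^\varepsilon_1|^2 \sigma^{1,\varepsilon}\,dx \; = \; (2-i)\, w^\varepsilon_1(x_0) \; + \; \int_U w^\varepsilon_1 \sigma^{1,\varepsilon}\,dx \; + \; (\text{cross and lower-order terms}) \; + \; \varepsilon \int_{\partial U} w^\varepsilon_1 \frac{\partial \sigma^{1,\varepsilon}}{\partial \nu}\,dS.
\end{equation*}
The goal is then to bound each term on the right-hand side independently of $\varepsilon$.

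The one delicate algebraic point is the sign-indefinite cross-gradient term $-(\gamma^\varepsilon)'\mid_{\theta^\varepsilon_1}\! Du^\varepsilon_1 \cdot Du^\varepsilon_2 \, \sigma^{1,\varepsilon}$ coming from the $-(\gamma^\varepsilon)' Du^\varepsilon_1\cdot Du^\varepsilon_2$ piece of the Bernstein identity. To handle it I would use the polarization identity $-2\, Du^\varepsilon_1\cdot Du^\varepsilon_2 = |Du^\varepsilon_1 - Du^\varepsilon_2|^2 - |Du^\varepsilon_1|^2 - |Du^\varepsilon_2|^2$, which splits the offending term into $\tfrac{1}{2}(\gamma^\varepsilon)'\mid_{\theta^\varepsilon_1}\!|Du^\varepsilon_1 - Du^\varepsilon_2|^2 \sigma^{1,\varepsilon} - (\gamma^\varepsilon)' w^\varepsilon_1 \sigma^{1,\varepsilon} - (\gamma^\varepsilon)' w^\varepsilon_2 \sigma^{1,\varepsilon}$. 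The last two contribute with a favorable sign (the $(\gamma^\varepsilon)' w^\varepsilon_1 \sigma^{1,\varepsilon}$ piece moreover combines with the $(1+(\gamma^\varepsilon)') w^\varepsilon_1 \sigma^{1,\varepsilon}$ term into the clean $\int w^\varepsilon_1 \sigma^{1,\varepsilon}$), while the first, though positive, is dominated by $\tfrac{1}{2}\|Du^\varepsilon_1-Du^\varepsilon_2\|_{L^\infty}^2 \int (\gamma^\varepsilon)'\mid_{\theta^\varepsilon_1}\!\sigma^{1,\varepsilon}\,dx$, which is $\le C$ by Lemma~\ref{boundgrad} and Lemma~\ref{gamprimeb}.

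The remaining terms are now all controlled by results already in place: $w^\varepsilon_1(x_0)$ and $\int_U w^\varepsilon_1 \sigma^{1,\varepsilon}\,dx$ by Lemma~\ref{boundgrad} combined with $\int_U \sigma^{1,\varepsilon}\,dx \le 1$ from Lemma~\ref{kiUJ}(ii); the term $\int_U D_xH_1 \cdot Du^\varepsilon_1\, \sigma^{1,\varepsilon}\,dx$ by hypothesis (H\ref{kilo}.3) together with Lemma~\ref{boundgrad}; the cross term $\int_U (\gamma^\varepsilon)'\mid_{\theta^\varepsilon_2}\! w^\varepsilon_1 \,\sigma^{2,\varepsilon}\,dx$ and the obstacle term $\int_U (\gamma^\varepsilon)'\mid_{\theta^\varepsilon_1}\! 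Du^\varepsilon_1\cdot D\psi_1\, \sigma^{1,\varepsilon}\,dx$ by Lemma~\ref{gamprimeb} together with the $L^\infty$ bound on $Du^\varepsilon_j$; and finally the boundary term by the uniform estimate $\|w^\varepsilon_1\|_{L^\infty(\partial U)} \le C$ established in Step~I of Lemma~\ref{boundgrad} combined with $\varepsilon \int_{\partial U} |\partial_\nu \sigma^{1,\varepsilon}|\,dS \le 1$ (which follows from Lemma~\ref{kiUJ}(ii) since $\sigma^{j,\varepsilon} \ge 0$ forces $\partial_\nu \sigma^{j,\varepsilon} \le 0$ on $\partial U$). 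The case $j=2$ is proved identically by repeating the same steps with $w^\varepsilon_2$ and $\sigma^{2,\varepsilon}$ in place of $w^\varepsilon_1$ and $\sigma^{1,\varepsilon}$, and using the second equation of \eqref{rfd}. The conceptual obstacle is entirely the weakly-coupled cross term, whose treatment via the polarization identity together with Lemma~\ref{gamprimeb} is precisely what allows the adjoint method to close in the coupled setting.
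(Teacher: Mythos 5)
Your proof is correct and essentially identical to the paper's, which simply invokes the already-established identity \eqref{obs26} (where the polarization rewriting of the cross-gradient term you describe already appears in \eqref{obs25}) and observes that once Lemma~\ref{boundgrad} bounds $\|Du^\varepsilon_j\|_{L^\infty}$, every term other than $\varepsilon\int_U|D^2u^\varepsilon_1|^2\sigma^{1,\varepsilon}\,dx$ is controlled by Lemmas~\ref{kiUJ}, \ref{gamprimeb}, \ref{boundgrad} and hypothesis (H\ref{kilo}.3). You simply re-derive the identity rather than cite it.
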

\begin{proof}
The bound of the Hessian of $D^2 u^{\varepsilon}_1$ comes from identity \eqref{obs26}, together with Lemma \ref{boundgrad}.
The other bound can be obtained in an similar way.
\end{proof}
We can finally prove the analogous of Lemma \ref{obs_lem4}.
\begin{Lemma}
There exists a positive constant, independent of $\varepsilon$, such that
\begin{equation*}
\max_{\substack{ j=1,2 \\  x \in \overline{U} }} \dfrac{\theta^{\varepsilon}_j (x)}{\varepsilon} \leq C, 
\quad \quad  \max_{\substack{ j=1,2 \\  x \in \overline{U} }} \gamma^{\varepsilon} ( \theta^{\varepsilon}_j (x) ) \leq C.
\end{equation*}
\end{Lemma}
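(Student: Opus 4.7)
The plan is to mirror the strategy of Lemma \ref{obs_lem4}, exploiting a crucial structural identity that is special to obstacle-type couplings: the two obstacle functions $\theta^\varepsilon_1$ and $\theta^\varepsilon_2$ satisfy
\[
\theta^\varepsilon_1 + \theta^\varepsilon_2 = -\psi_1 - \psi_2 \leq -2\alpha < 0 \quad \text{on } \overline U,
\]
so that $\theta^\varepsilon_1$ and $\theta^\varepsilon_2$ cannot be simultaneously nonnegative. Since the second bound follows from the first by the definition of $\gamma^\varepsilon$ and the coercivity $\gamma(s) \to +\infty$ (as in the proof of Lemma \ref{obs_lem4}), I will focus on the first estimate.

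First, I would observe that on $\partial U$ the boundary conditions $u^\varepsilon_1 = u^\varepsilon_2 = 0$ together with $\psi_j \geq \alpha$ give $\theta^\varepsilon_j \leq -\alpha < 0$ there. Let $(j^*, x^*)$ realize $\max_{j, x \in \overline U} \theta^\varepsilon_j$. If this maximum is $\leq 0$ we are done; otherwise, by the boundary analysis, $x^* \in U$, and I may assume without loss of generality that $j^* = 1$. The key point is then that the identity above forces $\theta^\varepsilon_2(x^*) \leq -2\alpha - \theta^\varepsilon_1(x^*) < 0$, so that $\gamma^\varepsilon(\theta^\varepsilon_2(x^*)) = 0$ by the definition of $\gamma$.

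Next, I would subtract the two equations in \eqref{tg} and evaluate at $x^*$, where by the maximum principle $D\theta^\varepsilon_1(x^*) = 0$ and $\Delta \theta^\varepsilon_1(x^*) \leq 0$. The first gives $Du^\varepsilon_1(x^*) = Du^\varepsilon_2(x^*) + D\psi_1(x^*)$, and combined with the vanishing of $\gamma^\varepsilon(\theta^\varepsilon_2(x^*))$ one obtains
\[
\gamma^\varepsilon(\theta^\varepsilon_1(x^*)) \leq -\theta^\varepsilon_1(x^*) - \psi_1(x^*) - H_1(x^*, Du^\varepsilon_2(x^*) + D\psi_1(x^*)) + H_2(x^*, Du^\varepsilon_2(x^*)) + \varepsilon \Delta \psi_1(x^*).
\]
Since $\theta^\varepsilon_1(x^*) > 0$, $\psi_1 \geq \alpha$, and by Lemma \ref{boundgrad} the gradients $Du^\varepsilon_2(x^*)$ remain in a fixed bounded set (independent of $\varepsilon$), the right-hand side is bounded by a constant $C$ depending only on $\|\psi_j\|_{C^2}$ and on $\|H_j\|_{L^\infty(\overline U \times B_R)}$ for an appropriate $R$, yielding $\gamma^\varepsilon(\theta^\varepsilon_1(x^*)) \leq C$. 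The analogous argument applies if instead $j^* = 2$.

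The main obstacle, and the reason this lemma had to wait until the end of the section, is that a direct maximum-principle argument on $\theta^\varepsilon_1$ alone would be circular: the term $\gamma^\varepsilon(\theta^\varepsilon_2)$ appearing after subtraction could a priori be large. The resolution comes from the sign identity $\theta^\varepsilon_1 + \theta^\varepsilon_2 \leq -2\alpha$, which forces $\gamma^\varepsilon(\theta^\varepsilon_2(x^*))$ to vanish precisely at the problematic point, combined with the a priori gradient bound from Lemma \ref{boundgrad} (which itself relied on the adjoint estimate of Lemma \ref{gamprimeb}). The second estimate $\theta^\varepsilon_j(x)/\varepsilon \leq C$ is then immediate from $\gamma^\varepsilon(s) = \gamma(s/\varepsilon)$ and the fact that $\gamma \to +\infty$ at infinity.
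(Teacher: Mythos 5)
Your proof is correct and follows essentially the same route as the paper: locate a positive interior maximum of $\theta^\varepsilon_{j^*}$, observe that the identity $\theta^\varepsilon_1+\theta^\varepsilon_2=-\psi_1-\psi_2\le -2\alpha$ forces the competing penalization term to vanish there, subtract the two equations, and invoke the maximum principle together with the gradient bound of Lemma \ref{boundgrad}. Your explicit statement of the sign identity (which the paper uses only implicitly when asserting $\gamma^\varepsilon(\theta^\varepsilon_2(x_1))=0$) is a welcome clarification, while the extra substitution $Du^\varepsilon_1(x^*)=Du^\varepsilon_2(x^*)+D\psi_1(x^*)$ is harmless but unnecessary since Lemma \ref{boundgrad} already bounds both gradients.
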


\begin{proof}
It will be enough to prove the second inequality, since the first one will follow by the definition of $\gamma^{\varepsilon}$.
If the maximum is attained at the boundary, then 
\begin{equation*}
\max_{\substack{ j=1,2 \\  x \in \overline{U} }} \gamma^{\varepsilon} ( \theta^{\varepsilon}_j (x) ) 
= \max_{\substack{ j=1,2 \\  x \in \partial U }} \gamma^{\varepsilon} ( - \psi_j (x) ) =0.
\end{equation*}
Otherwise, let us assume that there exists $x_1 \in U$ such that 
\begin{equation*}
\max_{j=1,2} \max_{x \in \overline{U}} \gamma^\varepsilon (\theta^\varepsilon_j)= 
\gamma^\varepsilon (\theta^\varepsilon_1) (x_1) > 0, \quad \quad \gamma^{\varepsilon} (\theta^{\varepsilon}_2 (x_1)) = 0.
\end{equation*}
Since $\gamma^\varepsilon$ is increasing and $\gamma^{\varepsilon} (z) > 0$ if and only if $z > 0$, we also have 
$\max_{x \in \overline{U}} (\theta^\varepsilon_1(x)) = \theta^\varepsilon_1 (x_1) > 0$.
Evaluating the two equations in \eqref{tg} at $x_1$ and subtracting the second one from the first one
\begin{align*} 
&\theta^\varepsilon_1 (x_1) + \gamma^\varepsilon (\theta^\varepsilon_1 (x_1) ) 
= \varepsilon \Delta u^{\varepsilon}_1 (x_1) - \varepsilon \Delta u^{\varepsilon}_2 (x_1) 
- H_1 (x_1, D u_1^{\varepsilon} (x_1)) + H_2 (x_1, D u_2^{\varepsilon} (x_1)) - \psi_1 (x_1)  \\
& \le \varepsilon \Delta \psi_1 (x_1)  - H_1 (x_1, D u_1^{\varepsilon} (x_1)) + H_2 (x_1, D u_2^{\varepsilon} (x_1)) - \psi_1 (x_1) \\
&\leq \| \Delta \psi_1 (\cdot) \|_{L^{\infty}}  + \| H_1 (\cdot, D u_1^{\varepsilon} (\cdot)) \|_{L^{\infty}} 
+ \| H_2 (\cdot, D u_2^{\varepsilon} (\cdot)  ) \|_{L^{\infty}}  + \| \psi_1 (\cdot ) \|_{L^{\infty}} \leq C,
\end{align*}
where we the last inequality follows from Lemma \ref{boundgrad}.
\end{proof}
We now set for every $\varepsilon \in (0,1)$
\begin{equation*}
u_{j,\varepsilon}^\varepsilon (x) := \frac{\partial u^{\varepsilon}_j}{\partial \varepsilon} (x),
\quad x \in \overline{U}, j=1,2.
\end{equation*}
The next lemma gives a uniform bound for $u_{1,\varepsilon}^\varepsilon$
and $u_{2,\varepsilon}^\varepsilon$, thus concluding the proof of Theorem~\ref{obs_speedfin}.
\begin{Lemma}
There exists a positive constant $C > 0$ such that
\begin{equation*}
\max_{\substack{ j=1,2 \\  x \in \overline{U} }}  | u_{j,\varepsilon}^\varepsilon (x) | \leq \frac{C}{\varepsilon^{1/2}}.
\end{equation*}
\end{Lemma}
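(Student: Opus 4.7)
The plan is to mimic the three-step scheme used in Lemma \ref{obs_thm1}: differentiate the penalized system in $\varepsilon$, pair it with the weakly coupled adjoint system \eqref{rfd}, and exploit the estimates already collected (the bound on $\theta^\varepsilon_j/\varepsilon$, Lemma \ref{gamprimeb} on the integrated $(\gamma^\varepsilon)'$, Lemma \ref{kiUJ} on the total mass of $\sigma^{j,\varepsilon}$, and Lemma \ref{dggt} on the weighted Hessians).

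First, I would differentiate \eqref{tg} with respect to $\varepsilon$, obtaining a system which, written through the linearized operator $L^\varepsilon$, reads
\begin{equation*}
L^\varepsilon(u^\varepsilon_{1,\varepsilon}, u^\varepsilon_{2,\varepsilon})_j = \Delta u^\varepsilon_j - \gamma^\varepsilon_\varepsilon\bigm|_{\theta^\varepsilon_j}, \qquad j=1,2,
\end{equation*}
with zero boundary data, since $u^\varepsilon_j=0$ on $\partial U$ for every $\varepsilon$. Without loss of generality assume the maximum is attained in the interior at some $\widehat{x}\in U$ by, say, $|u^\varepsilon_{1,\varepsilon}|$; choose $i=1$ and $x_0=\widehat{x}$ in the adjoint system \eqref{rfd}. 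Applying the duality formulas \eqref{gbh} and \eqref{gbh2} with $\varphi_j=u^\varepsilon_{j,\varepsilon}$ (for which the boundary integrals vanish), and then summing the two resulting identities, the cross-coupling terms involving $(\gamma^\varepsilon)'\mid_{\theta^\varepsilon_j}\,u^\varepsilon_{k,\varepsilon}$ cancel, yielding the clean representation
\begin{equation*}
u^\varepsilon_{1,\varepsilon}(\widehat{x}) = \sum_{j=1}^2\int_U \Delta u^\varepsilon_j\,\sigma^{j,\varepsilon}\,dx - \sum_{j=1}^2\int_U \gamma^\varepsilon_\varepsilon\bigm|_{\theta^\varepsilon_j}\,\sigma^{j,\varepsilon}\,dx.
\end{equation*}

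Next, each of the two sums would be estimated separately. For the singular contribution, the identity $\gamma^\varepsilon_\varepsilon(s)=-(s/\varepsilon)(\gamma^\varepsilon)'(s)$ combined with the previous lemma (which gives $|\theta^\varepsilon_j|/\varepsilon\le C$) shows $|\gamma^\varepsilon_\varepsilon\mid_{\theta^\varepsilon_j}|\le C(\gamma^\varepsilon)'\mid_{\theta^\varepsilon_j}$, and Lemma \ref{gamprimeb} then bounds the second sum by a constant independent of $\varepsilon$. For the Hessian contribution, Cauchy--Schwarz together with Lemma \ref{kiUJ}(ii) and Lemma \ref{dggt} gives
\begin{equation*}
\int_U |\Delta u^\varepsilon_j|\,\sigma^{j,\varepsilon}\,dx \le \Bigl(\int_U |D^2 u^\varepsilon_j|^2\,\sigma^{j,\varepsilon}\,dx\Bigr)^{1/2}\Bigl(\int_U\sigma^{j,\varepsilon}\,dx\Bigr)^{1/2}\le \frac{C}{\varepsilon^{1/2}}.
\end{equation*}
Combining these bounds yields $|u^\varepsilon_{1,\varepsilon}(\widehat{x})|\le C\varepsilon^{-1/2}$, and the symmetric choice $i=2$ covers the case when the maximum is realized by $u^\varepsilon_{2,\varepsilon}$.

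The delicate point, exactly as in the scalar obstacle case, is controlling the singular term $\gamma^\varepsilon_\varepsilon$; here this is achieved precisely because the preceding lemma furnishes the uniform bound $\theta^\varepsilon_j/\varepsilon\le C$, which converts $\gamma^\varepsilon_\varepsilon$ into an admissible multiple of $(\gamma^\varepsilon)'$. The cancellation of the coupling terms after summing the two duality identities is what makes the weakly coupled structure compatible with this scheme, and once it is in place the rest reduces to the by-now-standard Cauchy--Schwarz argument against the Hessian estimate of Lemma \ref{dggt}.
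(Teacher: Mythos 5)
Your proposal is correct and follows essentially the same path as the paper's proof: differentiate \eqref{tg} in $\varepsilon$, test with the adjoint pair $(\sigma^{1,\varepsilon},\sigma^{2,\varepsilon})$ from \eqref{rfd}, observe the cancellation of the coupling terms, and then bound the Hessian term via Cauchy--Schwarz with Lemmas \ref{kiUJ} and \ref{dggt} and the singular $\gamma^\varepsilon_\varepsilon$ term via the $\theta^\varepsilon_j/\varepsilon$ bound and Lemma \ref{gamprimeb}. One small imprecision worth flagging: the preceding lemma supplies only the one-sided bound $\theta^\varepsilon_j/\varepsilon\le C$, not $|\theta^\varepsilon_j|/\varepsilon\le C$, but this suffices exactly as in \eqref{obs6} because $\gamma'(s)=0$ for $s\le 0$, so $\gamma^\varepsilon_\varepsilon\mid_{\theta^\varepsilon_j}$ vanishes wherever $\theta^\varepsilon_j\le 0$.
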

\begin{proof}
If the above maximum is attained at the boundary, then 
\begin{equation*}
\max_{\substack{ j=1,2 \\  x \in \overline{U} }}  | u_{j,\varepsilon}^\varepsilon (x) |=
\max_{\substack{ j=1,2 \\  x \in \partial U }}  | u_{j,\varepsilon}^\varepsilon (x) | = 0, 
\end{equation*}
since $u_{1,\varepsilon}^\varepsilon = u_{2,\varepsilon}^\varepsilon = 0$ on $\partial U$.
Otherwise, assume that there exists $\overline{x} \in U$ such that
\begin{equation*}
\max_{\substack{ j=1,2 \\  x \in \overline{U} }}  | u_{j,\varepsilon}^\varepsilon (x) |
=  | u_{1,\varepsilon}^\varepsilon (\overline{x}) |.
\end{equation*}
Differentiating \eqref{tg} w.r.t. $\varepsilon$ we have 
\begin{equation} \label{tg4}
\left\{ \begin{aligned}
(1+(\gamma^\varepsilon)' \mid_{\theta^{\varepsilon}_1} )u_{1,\varepsilon}^\varepsilon 
+D_p H_1\cdot Du_{1,\varepsilon}^\varepsilon 
- (\gamma^\varepsilon)' \mid_{\theta^{\varepsilon}_1} u_{2,\varepsilon}^\varepsilon
+ \gamma_\varepsilon^\varepsilon \mid_{\theta^{\varepsilon}_1}
= \varepsilon \Delta u_{1,\varepsilon}^\varepsilon 
+ \Delta u^\varepsilon_1 \quad\mbox{in}~U, \vspace{.05in}\\
(1+(\gamma^\varepsilon)' \mid_{\theta^{\varepsilon}_2} )u_{2,\varepsilon}^\varepsilon 
+D_p H_2\cdot Du_{2,\varepsilon}^\varepsilon 
- (\gamma^\varepsilon)' \mid_{\theta^{\varepsilon}_2} u_{1,\varepsilon}^\varepsilon
+ \gamma_\varepsilon^\varepsilon \mid_{\theta^{\varepsilon}_2}
= \varepsilon \Delta u_{2,\varepsilon}^\varepsilon + \Delta u^\varepsilon_2 \quad\mbox{in}~U. \vspace{.05in}
\end{aligned} \right.
\end{equation}
Let $\sigma^{1,\varepsilon}$ and $\sigma^{2,\varepsilon}$ be the solutions 
to system \eqref{rfd} with $i=1$ and $x_0 = \overline{x}$.
Multiplying \eqref{tg4}$_1$ and \eqref{tg4}$_2$ by $\sigma^{1,\varepsilon}$
and $\sigma^{2,\varepsilon}$ respectively, integrating by parts 
and adding up the two relations obtained we have
\begin{equation*}
u_{1,\varepsilon}^\varepsilon (\overline{x})
= \sum_{j=1}^2
\left( \int_U \Delta u^{\varepsilon}_j \, \sigma^{j,\varepsilon} \, dx
-  \int_U \gamma^{\varepsilon}_{\varepsilon} \mid_{\theta^{\varepsilon}_j} \sigma^{j,\varepsilon} \, dx \right).
\end{equation*}
Thus, 
\begin{equation*}
|u_{1,\varepsilon}^\varepsilon (\overline{x})|
\leq \sum_{j=1}^2 
\left( \int_U |\Delta u^{\varepsilon}_j | \, \sigma^{j,\varepsilon} \, dx
+ \int_U | \gamma^{\varepsilon}_{\varepsilon} \mid_{\theta^{\varepsilon}_j} | \, \sigma^{j,\varepsilon} \, dx \right).
\end{equation*}
At this point, the proof can be easily concluded by repeating what was done in Section \ref{sectobs}
showing relations \eqref{obs6}--\eqref{obs8}.
\end{proof}

\end{section}

\bigskip

\bibliographystyle{alpha}
\bibliography{mainbibliography}

\end{document}